\theoremstyle{plain}
\newtheorem{theorem}{Theorem}
\newtheorem{proposition}{Proposition}
\newtheorem{lemma}{Lemma}
\newtheorem{definition}{Definition}
\newtheorem{conjecture}{Conjecture}
\begin{document}

\title{Descent and Theta Functions for Metaplectic Groups}
\author{Solomon Friedberg}
\author{David Ginzburg}
\address{Department of Mathematics, Boston College, Chestnut Hill MA 02467-3806, USA}
\email{solomon.friedberg@bc.edu}
\address{School of Mathematical Sciences, Tel Aviv University, Ramat Aviv, Tel Aviv 6997801,
Israel}
\email{ginzburg@post.tau.ac.il}
\thanks{This work was supported by the US-Israel Binational Science Foundation,
grant number 201219, and by the National Security Agency, grant number
H98230-13-1-0246 and the National Science Foundation, grant number 1500977 (Friedberg).}
\subjclass[2010]{Primary 11F70; Secondary 11F27, 11F55}
\keywords{Metaplectic group, theta representation, descent integral, unipotent orbit, Patterson conjecture,
Chinta-Friedberg-Hoffstein conjecture, Gauss sum}
\begin{abstract} 
There are few constructions of square-integrable automorphic functions on metaplectic groups.  Such functions may
be obtained by the residues of certain Eisenstein series on covers of groups, ``theta functions," but 
the Fourier coefficients of these residues are not well-understood, even for low degree covers of $GL_2$.
Patterson and Chinta-Friedberg-Hoffstein proposed conjectured relations for the Fourier
coefficients of the $GL_2$ quartic and sextic theta functions (resp.), each obtained from a conjectured equality
of non-Eulerian Dirichlet series.  
In this article we propose
a new framework for constructing specific $L^2$ metaplectic functions and for understanding these conjectures: descent integrals.
We study descent integrals which begin with theta functions on covers of larger rank classical groups and use
them to construct certain $L^2$ metaplectic functions on covers related to $GL_2$.  
We then establish information about the Fourier coefficients of these metaplectic automorphic functions, properties
which are consistent with the conjectures of Patterson and Chinta-Friedberg-Hoffstein.
In particular, 
we prove that Fourier coefficients of the descent functions are arithmetic for infinitely many primes $p$.
We also show that they generate a representation with non-zero projection to the space of theta.
We conjecture that the descents may be used to realize the quartic and sextic theta functions.  Moreover, 
this framework suggests
that each of the conjectures of Patterson and Chinta-Friedberg-Hoffstein
is the first in a series of relations between certain Fourier coefficients of two automorphic
forms on different covering groups. 
\end{abstract}
\maketitle

\section{Introduction}

Let $n>1$ and $F$ be a number field containing a full group of $n$th roots of unity, $\mu_n$.
For $G$ in a broad class of algebraic groups including
all Chevalley groups, there is a family of central simple extensions of the adelic points of $G$
by $\mu_n$: the metaplectic groups. If $\widetilde{G}$ is a metaplectic group, the rational points $G(F)$ embed discretely
and one may study automorphic functions for $\widetilde{G}$.  The first examples of such functions,
Eisenstein series, may be constructed as in the case of $G$ itself.  However, these are not square-integrable.
Metaplectic cusp forms are square-integrable, but almost nothing is known about their Whittaker coefficients.
Here we construct square-integrable functions on $G(F)\backslash \widetilde{G}$ in specific cases
and use the constructions to give information about their Whittaker coefficients.

A first way to obtain square-integrable functions on metaplectic groups is as the residues of certain Eisenstein series.  
These residues, ``higher theta functions,"
 are difficult to study, and even for $G=GL_2$, for covers of degree $n$ greater than 3
very little is known.
The metaplectic
Eisenstein series for $G=GL_2$, $n>2$ were introduced by Kubota \cite{Ku}, who showed in the 1960s that their Fourier
coefficients are infinite sums of $n$-th order Gauss sums.  Hence one can not approach
the problem of determining their residues directly from this expansion.
When $n=3$, Patterson determined the cubic theta function in full in the 1970s,
and showed that its Fourier coefficient at a good prime $p$ was a Gauss sum attached to the cubic residue symbol
$(*/p)_3$ \cite{Pa}.
However, for $G=GL_2$, $n>3$, such a determination is not known, and
the difficulty of this problem has a representation-theoretic origin: the Whittaker functional
of the local theta representation is not unique.

In spite of this non-uniqueness, there are two conjectures concerning the Fourier coefficients of the higher theta functions.
In the early 1980s Patterson considered the case $n=4$ \cite{Pa2}; his
conjecture states that the square of the Fourier coefficient of the biquadratic theta function at a good prime $p$
is twice a normalized Gauss sum formed from a biquadratic residue symbol $(*/p)_4$.  Numerical and additional theoretical
work of Eckhart and Patterson  \cite{Eck-Pat} has supported this conjecture,
but further progress has been hard to come by;
a possible approach using Rankin-Selberg integrals has been offered by Suzuki \cite{Su1},
who was able to prove results about the arguments but not the modulus of the coefficients.
More recently, Chinta, Friedberg, and Hoffstein \cite{C-F-H}
conjectured a relation (the CFH conjecture)
between the Fourier coefficients at squares of the theta function on the six-fold
cover of $GL_2$ and cubic Gauss sums, or equivalently the Whittaker coefficients of the cubic theta
function.  This conjecture has recently been studied numerically and refined  by
Br\"oker and Hoffstein \cite{Br-Ho}, working over $\mathbb{Q}(\exp(2\pi i/6))$. 
Each of these conjectures can be reformulated as a conjectured equality of two Dirichlet series which are not Eulerian
but have analytic continuation and functional equation.  
Complementing these numerical studies for number fields, 
some progress has been made for function fields: in the early 1990s, 
Hoffstein \cite{Ho} proved Patterson's Conjecture over the rational function field, and
the CFH conjecture was established in this case as well in \cite{C-F-H}.  However,
these proofs are by explicit computation of rational functions, and do not shed light
on why the conjectures should be true for number fields (or more general function fields). Moreover, these conjectures
have not been fit into a general framework, and it has not been apparent whether, even if proved, they would
be isolated facts or the first instances of a more general theory.

The goals of this paper are to provide a new approach to constructing 
specific square-integrable automorphic functions on metaplectic 
groups, by using descent integrals for metaplectic groups, and to provide as well a new
framework for viewing the conjectures of Patterson and Chinta-Friedberg-Hoffstein, namely through such descent integrals.   
Indeed, we construct $L^2$ functions on these
covering groups modulo the discrete subgroup 
whose properties closely match the conjectured properties of the quartic and sextic theta functions
(in particular, the relevant Fourier coefficients at a set of primes of positive density
are algebraic).
Unfortunately, we are not able to prove that the functions we obtain by descent are the theta functions, but 
we do show that in each case they generate
a representation that has non-zero projection to the space of theta.  Saying more would requite the analysis of archimedean
integrals (where the cover is trivial), which we do not undertake.  However,
we conjecture (Conjecture~\ref{conjecture1}) that in each case one may construct the
theta function from such a descent.  Our approach also suggests
that each of the conjectures of Patterson and Chinta-Friedberg-Hoffstein
is the first in a series of relations between various Fourier coefficients of automorphic
forms on different covering groups, obtained from a series of descent constructions on higher rank groups.

The key to this work is to adapt descent constructions to covering groups.
Descent is a method studied in Ginzburg, Rallis and Soudry \cite{G-R-S} which may be described as follows. Let $G$ denote an algebraic group, and let $\Theta$ denote a small representation of $G({\mathbb A})$,
where $\mathbb{A}$ denotes the ring of adeles of $F$.  In our context, a small representation may be defined as a representation which is not generic. In most examples, the representation $\Theta$ is a residue of an Eisenstein series, induced from a certain automorphic representation $\tau$ defined on a certain Levi subgroup of a parabolic subgroup of $G$. If needed, we write $\Theta_\tau$ to emphasize the dependence on $\tau$.

Let ${\mathcal O}$ denote a unipotent orbit of the group $G$. As explained in Ginzburg \cite{G1}, one can associate
to this unipotent orbit a unipotent subgroup $U$ of $G$, and a character $\psi_U$ of $U(F)\backslash U({\mathbb A})$. Assume that the stabilizer of this character inside  a suitable parabolic subgroup of $G$ is the reductive group $H$.
With this data one can construct the space of functions
\begin{equation}\label{intro1}
f(h)=\int\limits_{U(F)\backslash U({\mathbb A})}\theta_\tau(uh)\,\psi_U(u)\,du
\end{equation}
where $\theta_\tau\in\Theta_\tau$.
Denoting by $\sigma$ the representation of $H({\mathbb A})$ which is generated by all such $f(h)$, the main goal of the descent method is to understand the relation between the two representations $\tau$ and $\sigma$.
Another class of descent integrals may also be formed incorporating Fourier-Jacobi
coefficients; an example is given in (\ref{general1}) below.

So far all known examples of descent constructions
have been in the case where $\Theta_\tau$ is a representation on a linear group or on the double cover of a reductive group.
An important application has been when the correspondence between $\tau$ and $\sigma$ is functorial. This allows one to give examples of the Langlands correspondence or its inverse. Another application arises when both representations $\tau$ and $\sigma$ are generic, in which case the descent construction may be used to derive branching rules.  See \cite{G2} for such examples.

In this paper we initiate the study of descent constructions using small representations attached to arbitrary covering groups.  
In the metaplectic context, where little is known,
it may seem surprising that by using a representation on a larger
group one may be able to establish information about the resulting descent representation on a smaller group.  The key idea is to
choose a representation $\tau$ about which one already has some information.  
A source of such $\tau$ is the theorem of Kazhdan and Patterson \cite{K-P} that the
theta functions on certain $(n+1)$-fold covers of $GL_n$
have unique Whittaker models.  Thus theta functions on all such covers are more tractable and are good candidates for $\tau$.

With the applications to the Patterson and CFH Conjectures in mind, here
we concentrate on two specific cases. In the first case, we use a small representation defined on the 3-fold cover of $Sp_4$.  In this construction $\tau$ is the theta representation of the 3-fold cover of $GL_2$
and the resulting $\sigma$ is a representation on the 6-fold cover of $SL_2$. In the second case the small representation is defined on the 4-fold cover of $SO_7$, $\tau$ is defined on
a 4-fold cover of $GL_3$ and $\sigma$ on a 4-fold cover of $SO_4$.    To carry out the construction, we must
analyze the residue $\Theta_\tau$ of the resulting Eisenstein series.  We do so through unipotent orbits.

The unipotent
orbits are a partially ordered set, and a unipotent orbit ${\mathcal O}$ is {\sl attached} to
a representation if all Fourier coefficients of the form
(\ref{intro1}) for larger or incomparable orbits vanish identically and some coefficient
for ${\mathcal O}$ is nonzero. For an arbitrary metaplectic
covering of the group $GL_r$, the
determination of the unipotent orbits attached to a theta
representation has only just been accomplished (Cai, \cite{Cai}).
 For the double cover of odd orthogonal groups, this determination is the main step
in our paper \cite{B-F-G} with D.\ Bump.
Here, we show that in the $Sp_4$ case the representation $\Theta_\tau$ is attached to the unipotent orbit $(2^2)$, and in the $SO_7$ case it is attached to the orbit $(3^21)$.   Here we have identified unipotent orbits with partitions in the
standard way, based on the Jordan decomposition.  See Collingwood-McGovern \cite{C-M}, Section 5.1.

The key is that in both cases the descent integral allows us to relate the finite part of the Whittaker coefficients of the representation $\sigma$ to those of $\tau$.  
One may then use Hecke theory to simplify the relations, and they then
give two identities involving the Fourier coefficients of $\sigma$ and certain Gauss sums. This allows us to establish our main results, Theorems~\ref{maincfh} and \ref{mainso7}, which show that the $L^2$ functions we construct by
descent  have Fourier coefficients that satisfy the same periodicity properties as those established by
Kazhdan and Patterson for the theta functions, and moreover satisfy the same
relations as conjectured for the theta functions in the conjectures of CFH and Patterson.
We establish these results for all primes sufficiently close to $1$ at an (undetermined but finite) set of bad places.
The restriction to such primes is required since the descent constructions result in
local integrals at the ramified places 
which we do not have a precise way to compute.  Our results are consistent with but independent from Conjecture~\ref{conjecture1}.

The framework provided here suggests a more general series of conjectural relations between certain Fourier
coefficients of higher theta functions and Gauss sums. We outline the construction in one specific case.
Let $\tau$ denote the theta representation on a (certain) $(2n+1)$-fold cover of $GL_{2n}$.
Using this representation construct a representation $\Theta_\tau^{(2n+1)}$ which is defined on the $(2n+1)$-fold
cover of $Sp_{4n}$ as a residue of a certain Eisenstein series.
Let $\sigma$ denote the representation of the $(4n+2)$-fold cover of $Sp_{2n}$ generated by the space of functions
\begin{equation}\label{general1}
f(h)=\int\limits_{U(F)\backslash U({\mathbb A})}\widetilde{\theta}(l(u)h)\,\theta_\tau(uh)\,\psi_U(u)\,du.
\end{equation}
Here $\widetilde{\theta}$ is a function in the space of the minimal representation of the double cover of $Sp_{2n}$, and
$\theta_\tau$ is a function in the space of $\Theta_\tau^{(2n+1)}$. Also, $U$ is a certain unipotent subgroup of $Sp_{4n}$, $\psi_U$ is a character of $U(F)\backslash U({\mathbb A})$, and $l$ is a certain homomorphism from $U$ onto the Heisenberg group with $2n+1$ variables.  (In our case $\psi_U$ will be trivial but in higher rank cases it would appear.)

Then we expect that the representation $\Theta_\tau^{(2n+1)}$ is attached to the unipotent orbit $((2n)^2)$.
Once this is established, it should be possible to prove an identity similar to the CFH identity,
relating certain values of a Fourier coefficient of $\sigma$, a representation on the $(4n+2)$-fold
cover of $Sp_{2n}$, with certain $(2n+1)$-th order Gauss sums.  The case $n=1$ is treated here.
One should also be able to carry out similar constructions for orthogonal groups which generalize the construction
for $SO_7$ given here, and work with even orthogonal groups as well.

We thank James Arthur, Jim Cogdell, Erez Lapid, Colette M\oe glin and David Soudry for helpful conversations.

\section{Notations and Preliminaries}

\subsection{Basic Notations}\label{basic-notations}
Let $F$ denote a number field and let ${\mathbb{A}}$ denote its ring of adeles.
Since we work with covering groups, we will always require that $F$
contain certain groups of roots of unity, specified below.
If $\nu$ is place of $F$, we write $F_\nu$ for the completion of $F$ at $\nu$, and if $\nu$
is a finite place we write $O_\nu$ for the ring of local integers.
Let $\psi$ be a nontrivial character of $F\backslash {\mathbb{A}}$.
Let $G$ be an algebraic group, $V$ be a unipotent subgroup of $G$,
and $\psi_V$ be a given character of $V(F)\backslash V(\mathbb{A})$.
Let $\pi$ denote an automorphic representation of the group $G({\mathbb{A}})$
or of a cover of this group; for such covers the group $V(\mathbb{A})$ embeds
canonically via the trivial section \cite{M-W}.  For a given vector $\varphi$
in the space of $\pi$, denote
$$\varphi^{V,\psi_V}(h)=\int\limits_{V(F)\backslash V({\mathbb{A}})}
\varphi_\pi(vh)\,\psi_V(v)\,dv.$$

We will make use of the symplectic and orthogonal groups.
Let $J_n$ denote the $n\times n$ matrix with ones on the anti-diagonal and zeros elsewhere.
In terms of matrices, we will
represent the group $Sp_{2n}$ as the invertible linear transformations
that preserve the symplectic form given by $\begin{pmatrix}
&J_n\\ -J_n&\end{pmatrix}$, and the orthogonal group $SO_n$ as the invertible linear
transformations that preserve the symmetric bilinear form given by
$J_n$. All roots are defined with respect to the standard maximal
torus of diagonal matrices and positive roots are those corresponding
to the standard upper triangular unipotent subgroup.
We shall denote by $w_i$ the simple reflections of
the Weyl group. Order the positive roots so that in $Sp_4$, $w_1$ is the simple reflection
corresponding to the short root, and in the $SO_7$, $w_1$ and $w_2$
are the simple reflections which generate the Weyl group of $GL_3$
and $w_1$ and $w_3$, the third simple reflection of $SO_7$,
commute. We shall write $w[i_1i_2\ldots i_l]$ for
$w_{i_1}w_{i_2}\ldots w_{i_l}$.  If we work with matrices, we always represent $w_i$ by embedding
the matrix $\left(\begin{smallmatrix}0&1\\-1&0\end{smallmatrix}\right)$ at the corresponding root.

Let $\widetilde{\theta}_\phi^\psi$ denote the theta function defined on the
double cover of $SL_2({\mathbb{A}})$. Here $\phi$ is a Schwartz function of ${\mathbb{A}}$.
See for example \cite{G-R-S}, pg.\ 8.

Let $G$ be a reductive algebraic group. We denote by $G^{(n)}(\mathbb{A})$ an $n$-fold cover of the group
$G(\mathbb{A})$ as in Matsumoto \cite{M} (if $G$ is split and connected);
more generally see Brylinski-Deligne \cite{B-D}.  Such a cover is a group extension of $G(\mathbb{A})$ by
the group of $n$-th roots of unity $\mu_n$ and is defined
via a two-cocycle $\sigma$ which is constructed using local Hilbert symbols.
This cover exists when $F$ contains
enough roots of unity.
We recall that $G(F)$ embeds discretely in $G^{(n)}(\mathbb{A})$.
Similarly if $F_v$ is local we write $G^{(n)}(F_v)$ for an $n$-fold cover of $G(F_v)$.

Specifically, if $G=SL_r$, by $SL_r^{(n)}(F_v)$ we shall mean the $n$-fold cover of $SL_r$
which is described in detail in Kazhdan-Patterson \cite{K-P},
Section 0. This cover is determined by a two-cocycle $\sigma_n$
on $SL_r(F_v)$ which satisfies the property
\begin{equation}\label{cocycle-n}
\sigma_n(\text{diag}(h_1,\dots,h_r),\text{diag}(k_1,\dots,k_r))=\prod_{i<j}(h_i,k_j)_n,
\end{equation}
where $(~,~)_n$ is the $n$-th order local Hilbert symbol.  (We will drop the subscript $n$ when it is clear from context.)
This requires a choice of embedding of $\mu_n$ into $\mathbb{C}^\times$, which
we fix but do not incorporate into the notation.
We will also be concerned with the $n$-fold covers of $GL_2(\mathbb{A})$.
For the general linear group, the basic cocycle $\sigma_n$ satisfying \eqref{cocycle-n} may be twisted
to obtain a new cocycle
$\sigma_{n,c}(g_1,g_2)=\sigma_n(g_1,g_2)\,(\det g_1,\det g_2)^c$
with  $c\in \mathbb{Z}/n\mathbb{Z}$ .  See Kazhdan-Patterson \cite{K-P}, pg.\ 41.
Thus there is more than one cover, and in fact
the representation theory of different covers is not the same (\cite{K-P}, Cor.\ I.3.6).

In this paper we will be concerned with the
$3$-fold cover of $Sp_4(\mathbb{A})$ and the 4-fold covers of $SO_7(\mathbb{A})$, $SO_5(\mathbb{A})$
and $SO_4(\mathbb{A})$.
These covers are obtained via restriction from covers of the special linear group.
However, in forming the $3$-fold cover of $Sp_4$ it is convenient to use the $2$-cocycle
$$\sigma_{Sp_4}(g_1,g_2)=\sigma_3(wg_1w^{-1},wg_2w^{-1}),\qquad w=\begin{pmatrix}I_2&\\&J_2\end{pmatrix}.$$
In working with this group we require that $F$ contain
a full set of third roots of unity.  To form the 4-fold cover of $SO_7$ we
restrict the 8-fold cover of $SL_7(\mathbb{A})$ to $SO_7(\mathbb{A})$:
$$\sigma_{SO_7}(g_1,g_2)=\sigma_8(w'g_1w'^{-1},w'g_2w'^{-1}),\qquad
w'=\begin{pmatrix}I_4&\\&J_3\end{pmatrix}.$$Accordingly in this case
we shall assume that $F$ contains the eighth roots of unity. It is proper to regard this as a 4-fold rather
than 8-fold cover as the 4-th power of the cocycle $\sigma_{SO_7}$ is almost trivial.  See \cite{B-F-G}, Sect.~2.
The 4-fold cover of $SO_5$ is the restriction of this cover to $SO_5$ embedded in $SO_7$ via
\begin{equation}\label{so5-to-so7}
g\mapsto \begin{pmatrix}1&&\\&g&\\&&1\end{pmatrix}.
\end{equation}
The 4-fold cover of $SO_4$ is also obtained
from that for $SO_7$ via an embedding and will be described later.

We remark here that we could use $GSpin_7$ in place of $SO_7$.  The arguments below
concerning the constant terms and the various Fourier coefficients go through with
only minor technical changes.  The application to theta functions would be the same.

When $n$ is clear, we denote by $e_{i,j}$ the $n\times n$ matrix whose
$(i,j)$ entry is one and whose other entries are zero. We will also denote $e_{i,j}'=
e_{i,j}-e_{n+1-j,n+1-i}$.
Throughout this paper, when not specified,
matrices in $G(\mathbb{A})$ or $G(F_v)$ are embedded
in their covers by means of the trivial section $\mathbf{s}(g)=(g,1)$,
with one exception.  Over a local field $F_v$ such that $|n|_v=1$, for any $r\geq2$ the
compact subgroup $SL_r(O_v)$ embeds canonically in $SL_r^{(n)}$ (see \cite{K-P}, pgs.\ 43-44), 
$g\mapsto (g,\kappa(g))$.
This then restricts to an embedding of $G(O_v)$ into $G^{(n)}$
for each of the groups described above.  Because it plays little role in our calculations, we shall
use this embedding of $G(O_v)$ into the metaplectic group 
without introducing additional notation.  (We will not use $\kappa$ except
one place in Section~\ref{section6} below.)  At the finite places where $|n|_v\neq1$
a similar statement holds provided $SL_r(O_v)$ is replaced by a principal congruence subgroup.
Moreover, we shall always suppose that this subgroup is chosen
so that the diagonal elements in it are $n$-th powers in $F_v^\times$; for such elements $\kappa$ is $1$.
Lastly, we recall that the subgroup of upper triangular unipotents embeds in each cover
by the trivial section $\mathbf{s}$.

\subsection{Root Exchange}\label{exchange}

In the following sections during the computations, we will carry out
several Fourier expansions. One type of expansion will repeat
itself several times, and therefore it is convenient to state it in
generality. We shall refer to this process as {\sl root exchange}. We
recall this process briefly now;  a
detailed description is given in Ginzburg \cite{G}.
In our context, even though the functions we work
with are defined on covering groups, the cocycle arising from
the covering does not contribute any nontrivial factors
during the process of root exchange,
and so the description in \cite{G} applies.

Let $G$ be the adelic points of a split algebraic group or a cover of such.  Let
$\alpha$ and $\beta$ be two roots (not necessarily positive) and let $x_\alpha$ and $x_\beta$
be the standard one-parameter unipotents attached to $\alpha$ and $\beta$.
Let $U(\mathbb{A})$ be a unipotent subgroup that is normalized by $x_\alpha(t)$ and $x_\beta(t)$
for all $t\in \mathbb{A}$.  Let $f$ be an automorphic function.
We will be concerned with the possible vanishing of the integral
\begin{equation}\label{exch1}
\int\limits_{(F\backslash {\mathbb{A}})^2}\int\limits_{U(F)\backslash
U({\mathbb{A}})}f(ux_\alpha(m)x_\beta(r))\,\psi(m)\,du\,dm\,dr.
\end{equation}

To study this, consider the
following integral as a function of $g$:
\begin{equation}\label{exch2}\notag
L(g)=\int\limits_{F\backslash {\mathbb{A}}}\int\limits_{U(F)\backslash
U({\mathbb{A}})}f(ux_\alpha(m)g)\,\psi(m)\,du\,dm.
\end{equation}
We make the following hypothesis:  Suppose that $\gamma$ is
any root, positive or negative, which satisfies the
following commutation relations:  if $m,l,t\in \mathbb{A}$ then there
exist $u',u''\in U(\mathbb{A})$ such that
$$[x_\beta(l),x_\alpha(m)]=u';\ \ \ \ \ \ [x_\beta(l),x_\gamma(t)]=x_\alpha(l
t)u'';\ \ \ \ \ \ [x_\alpha(m),u'']=1.$$
Suppose that the function $L$ is left invariant under $x_\gamma(\delta)$ for
all $\delta\in F$, i.e.\ $L(x_\gamma(\delta)g)=L(g)$ for all $g\in G$.

In this case, we can expand the integral \eqref{exch1} along $x_\gamma(t)$
where $t\in F\backslash {\mathbb{A}}$. It is equal to
\begin{equation}\label{exch3}\notag
\sum_{\delta\in F}\int\limits_{(F\backslash {\mathbb
A})^3}\int\limits_{U(F)\backslash U({\mathbb
A})}f(ux_\alpha(m)x_\gamma(t)x_\beta(r))\,\psi(m+\delta t)\,dt\,du\,dm\,dr.
\end{equation}
Using the left-invariance properties of $f$ under the rational elements of $G$,
changing variables, and collapsing summation over $\delta$ with
integration over $r$, integral \eqref{exch1} is equal to
\begin{equation}\label{exch5}
\int\limits_{\mathbb{A}}\int\limits_{(F\backslash {\mathbb
A})^2}\int\limits_{U(F)\backslash U({\mathbb A})}
f(ux_\alpha(m)x_\gamma(t)x_\beta(r))\,\psi(m)\,dt\,du\,dm\,dr.
\end{equation}

Applying Lemma \ref{lem10} below, one deduces that the above
integral is zero for all choices of data if and only if the integral
\begin{equation}\label{exch6}
\int\limits_{(F\backslash {\mathbb{A}})^2}\int\limits_{U(F)\backslash
U({\mathbb{A}})}f(ux_\alpha(m)x_\gamma(t))\,\psi(m)\,dt\,du\,dm
\end{equation}
is zero for all choices of data. Hence, we deduce that the integral
\eqref{exch1} is zero for all choices of data if and only if
the  integral \eqref{exch6} is zero for all choices of
data. Referring to this process we will say that we exchange the
root $\beta$ by the root $\gamma$.
We will also call replacing the integral (\ref{exch1})
by the equal integral (\ref{exch5}) a root exchange.

We end this section with a general lemma that we will use several
times.  Let $\pi$ denote an automorphic representation of 
$G$, $U$ denote a unipotent subgroup of $G$ and $\psi_U$ 
denote a character of $U(F)\backslash U({\mathbb A})$. 
Given a function $f$ in $\pi$ denote by $f^{U,\psi_U}(g)$ the Fourier coefficient of $f$
determined by $U$ and $\psi_U$. Let $\{x(r)\mid r\in{\mathbb A}\}$ denote a one dimensional  unipotent subgroup of  $G$. Then
we have the following result.
\begin{lemma}\label{lem10}
Suppose that there is a one dimensional unipotent subgroup $\{y(m)\mid m\in{\mathbb A}\}$
of $G$ which satisfies the property
\begin{equation}\label{vanish10}
\int\limits_{\mathbb A}f^{U,\psi_U}(x(r)y(m))\,dr=
\int\limits_{\mathbb A}f^{U,\psi_U}(x(r))\,\psi(rm)\,dr
\end{equation}
for all $m\in {\mathbb A}$. Then the integral
\begin{equation}\label{vanish11}
\int\limits_{\mathbb A}f^{U,\psi_U}(x(r)g)\,dr
\end{equation}
is zero for all choices of data if and only if the Fourier coefficient $f^{U,\psi_U}(g)$ is
zero for all choices of data.
\end{lemma}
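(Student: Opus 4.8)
The plan is to recognize the relation \eqref{vanish10} as the statement that the partial Fourier transform of $f^{U,\psi_U}$ along the one-parameter subgroup $\{x(r)\}$ is computed by right translation by $\{y(m)\}$, and then to invoke injectivity of the Fourier transform on $\mathbb A$. One direction is immediate: if $f^{U,\psi_U}(g)$ vanishes identically (as $f$ and $g$ vary), then so does the integral \eqref{vanish11}. The content is in the converse.

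For the converse, assume that \eqref{vanish11} vanishes for all choices of data. First I would upgrade the hypothesis \eqref{vanish10} to allow an arbitrary argument $g$. Since the assignment $f\mapsto f^{U,\psi_U}$ intertwines right translation, namely $(\rho(g)f)^{U,\psi_U}(h)=f^{U,\psi_U}(hg)$, applying \eqref{vanish10} to the translate $\rho(g)f$ (which again lies in $\pi$) gives
\[
\int\limits_{\mathbb A} f^{U,\psi_U}(x(r)y(m)g)\,dr=\int\limits_{\mathbb A} f^{U,\psi_U}(x(r)g)\,\psi(rm)\,dr
\]
for all $m\in\mathbb A$ and all $g$. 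Here I am using that \eqref{vanish10} is a structural identity that holds for every vector of $\pi$, so that it may be applied to $\rho(g)f$.

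Now I would fix $g$ and set $\phi(r)=f^{U,\psi_U}(x(r)g)$, a function on $\mathbb A$. The right-hand side above is exactly the Fourier transform $\widehat\phi(m)=\int_{\mathbb A}\phi(r)\,\psi(rm)\,dr$, while the left-hand side is the integral \eqref{vanish11} with $g$ replaced by $y(m)g$. By the standing assumption the latter vanishes for every choice of data, and hence $\widehat\phi(m)=0$ for all $m\in\mathbb A$. Since $\mathbb A$ is a self-dual locally compact abelian group (with $\psi$ furnishing the pairing), injectivity of the Fourier transform on a suitable class of functions forces $\phi\equiv 0$. Evaluating at $r=0$ then yields $f^{U,\psi_U}(g)=0$; as $g$ and $f$ were arbitrary, the Fourier coefficient vanishes for all data. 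Note that $\phi$ need not be $F$-periodic, so this is genuinely an application of the Fourier transform on $\mathbb A$ rather than of Fourier series on $F\backslash\mathbb A$.

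The one point requiring care is the analytic justification of this last step: one must know that $r\mapsto f^{U,\psi_U}(x(r)g)$ lies in a class of functions on $\mathbb A$ for which a vanishing Fourier transform implies vanishing of the function, for instance continuous and integrable. This is where the growth properties of the automorphic functions in $\pi$ enter, together with the convergence of the unipotent integrals that is part of the descent setup, and it is the step I expect to be the main obstacle to a fully rigorous argument. The algebraic heart of the lemma, by contrast, is simply the identification of \eqref{vanish10} as the Fourier transform along $\{x(r)\}$, after which the conclusion follows formally.
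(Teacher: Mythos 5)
Your proposal is correct and rests on the same idea as the paper's proof: read \eqref{vanish10} as saying that right translation by $y(m)$ computes the Fourier transform in $r$ of $f^{U,\psi_U}(x(r)g)$, then conclude by Fourier-analytic injectivity (the trivial direction and the reduction to arbitrary $g$ via right translates are handled the same way in both). The one place you diverge is the final step, and it is exactly the point you flag as the main obstacle: you invoke pointwise injectivity of the Fourier transform, which requires knowing that $r\mapsto f^{U,\psi_U}(x(r)g)$ lies in a class (say continuous and $L^1$) where a vanishing transform forces a vanishing function. The paper instead runs the smeared version: it integrates \eqref{vanish11} at $g=y(m)$ against an arbitrary Schwartz function $\phi(m)$, which vanishes term by term by hypothesis, then applies \eqref{vanish10} and an interchange of integrals to obtain
\begin{equation*}
\int\limits_{\mathbb A} f^{U,\psi_U}(x(r))\,\hat{\phi}(r)\,dr=0
\end{equation*}
for every Schwartz $\phi$. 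Since the Fourier transform is a bijection on Schwartz space, $\hat{\phi}$ exhausts all Schwartz functions, and a smooth function of moderate growth that pairs to zero against all of Schwartz space vanishes identically; evaluating at $r=0$ gives $f^{U,\psi_U}(g)=0$. This duality formulation buys precisely what your approach lacks: it never needs the pointwise transform $\hat{\phi}$ of the automorphic coefficient to determine it, only the distributional statement, so no $L^1$ hypothesis is required (though both arguments, the paper's included, still tacitly use a Fubini interchange whose convergence is part of the standing root-exchange setup, since \eqref{vanish10} itself presupposes the relevant integrals exist). If you replace your pointwise inversion by this test-function argument, your proof becomes identical to the paper's and the caveat in your last paragraph disappears.
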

\begin{proof}
Assume that the integral \eqref{vanish11} is zero for all choices of data.
Let $\phi$ be any Schwartz function of ${\mathbb A}$. Then the integral
\begin{equation}\label{vanish12}
\int\limits_{{\mathbb A} ^2}f^{U,\psi_U}(x(r)y(m))\,\phi(m)\,dm\,dr\notag
\end{equation}
is zero for all choices of data. Using \eqref{vanish10}, we deduce that
\begin{equation}\label{vanish13}
\int\limits_{\mathbb A}f^{U,\psi_U}(x(r))\,\hat{\phi}(r)\,dr\notag
\end{equation}
is zero for all choices of data, where $\hat{\phi}$ is the Fourier 
transform of $\phi$. The Lemma follows.
\end{proof}

\section{Theta
Representations}\label{definitions}

\subsection{Definition and Basic Properties of the Theta Representation}\label{subsec31}
In this subsection we give the definition of the theta
representations for the groups $Sp_4^{(3)}({\mathbb{A}})$ and
$SO_7^{(4)}({\mathbb{A}})$. These are obtained globally
as residues of certain Eisenstein series.
The material in this subsection is an adaptation
to these groups of the construction of the theta representation for covers of the general
linear group in  \cite{K-P}. We also refer to \cite{B-F-G} where
a similar representation for the double cover of an odd orthogonal group was
constructed and studied, and \cite{F-G-S} where further work with
theta representations is carried out.  We also note that a theta function on the cubic cover
of $Sp_4$ was constructed for $F=\mathbb{Q}(\exp(2\pi i /3))$ by Proskurin \cite{Pro}.

Let $H$ be one of the groups $Sp_4$ and $SO_7$ and $n=3, 4$ resp.  Let $B$ denote its
standard Borel subgroup and write $B=TU$ where $T$ is the maximal
torus of $H$. Over a local field or over the
adeles, we denote the inverse of the groups $B$ and $T$ in
the covering group by
$\widetilde{B}$ and $\widetilde{T}$. Let $Z(\widetilde{T})$ denote
the center of $\widetilde{T}$,
and fix a maximal abelian
subgroup $\widetilde{A}$ of $\widetilde{T}$ containing $Z(\widetilde{T})$.
(The group $\widetilde{A}$ need not be the inverse image of a subgroup of $T$.)
It follows from \cite{K-P} that
representations of $\widetilde{B}$ are determined as follows. Fix a genuine
character $\chi$ of $Z(\widetilde{T})$, extend it in any way to a character of
$\widetilde{A}$, and then induce it to
$\widetilde{T}$. Then extend this representation trivially to $\widetilde{B}$.
Inducing the resulting representation of $\widetilde{B}$ to $H^{(n)}$ we obtain a
representation of this group which we denote by
$Ind_{\widetilde{B}}^{H^{(n)}}\chi$.  The notation is justified since
up to isomorphism this representation depends only on $\chi$
(and not on the choice of $\widetilde{A}$, in particular).
This may be carried out
over a local field or over the adeles.

In carrying out this process, it will be convenient to begin with a character $\chi$ of the torus
$T$.  We may restrict $\chi$ to the projection of $Z(\widetilde{T})$ to $T$
and then extend it to a genuine character of $Z(\widetilde{T})$ by settting
$\chi((t,\zeta))=\zeta\chi(t)$.
For this character we construct the induced representation as
explained above. Henceforth, when there is no confusion, we will
only specify the character $\chi$ on $T$.

The global theta representation will be defined as a residue of an
Eisenstein series induced from the Borel subgroup. Parameterize the
maximal torus of $Sp_4$ as
\begin{equation}\label{t-sp4}
t(a,b)=\text{diag}(a,b,b^{-1},a^{-1})
\end{equation}
 and that
of $SO_7$ as
\begin{equation}\label{t-so7}
t(a,b,c)=\text{diag}(a,b,c,1,c^{-1},b^{-1},a^{-1}).
\end{equation}
Unless mentioned otherwise, we will always use this parametrization.

We start with the case $H=Sp_4$. The group $Sp_4$ has two maximal parabolic subgroups. The first,
which we denote by $P$, has the group $GL_2$ as its Levi subgroup.
The second parabolic whose Levi subgroup is $GL_1\times SL_2$ we
shall denote by $Q$. A straightforward computation shows that when we
restrict the cocycle $\sigma_{Sp_4}$ from $Sp_4$ to the Levi part of $P$, that is to
$GL_2$, we obtain the complex conjugate of the $GL_2$ cocycle $\sigma_{3,1}$.
This will be important later, since the Whittaker coefficient of the
theta representation of the group $GL_2^{(3)}({\mathbb{A}})$
constructed using the cocycle $\sigma_{3,1}$ is not
unique. For the conditions of the uniqueness, see
\cite{K-P} Corollary I.3.6.

Let $\chi_{s_1,s_2}$ denote the character of $T$ defined by
$\chi_{s_1,s_2}=\delta_{B_{GL_2}}^{s_1}\delta_P^{s_2}$. In
coordinates we have
$\chi_{s_1,s_2}(t(a,b))=|a|^{s_1+3s_2}|b|^{-s_1+3s_2}$. Let
$E_B^{(3)}(h,s_1,s_2)$ denote the Eisenstein series of
$Sp_4^{(3)}({\mathbb{A}})$ associated with the induced representation
$Ind_{\widetilde{B}}^{Sp_4^{(3)}}\chi_{s_1,s_2}$.
(Here and below we often suppress the choice of section from the notation.)
Following Prop.~II.1.2 in \cite{K-P}, or Section 3 in
\cite{B-F-G} we deduce that the poles of this Eisenstein series are
determined by the poles of
$$\frac{\zeta(6s_1-3)\zeta(18s_2-9)\zeta(3s_1+9s_2-6)\zeta(-3s_1+9s_2-3)}
{\zeta(6s_1-2)\zeta(18s_2-8)\zeta(3s_1+9s_2-5)\zeta(-3s_1+9s_2-2)},$$
where $\zeta$ is the Dedekind zeta function of $F$.
This ratio has a multi-residue at $s_1=2/3$ and $s_2=2/3$.

\begin{definition}
The theta representation on $Sp_4^{(3)}(\mathbb{A})$ is
\begin{equation*}
\Theta_{Sp_4}^{(3)}=\text{res}_{s_2=2/3}\text{res}_{s_1=2/3}E_B^{(3)}(\cdot,s_1,s_2).
\end{equation*}
\end{definition}

Let $\Theta_{GL_2}^{(3)}$ be
the theta representation of the three-fold cover of $GL_2({\mathbb A})$ with $c=1$ as
defined in \cite{K-P}.
Let $E_{P}^{(3)}(h,s)$ denote the space of Eisenstein series of
$Sp_4^{(3)}({\mathbb{A}})$ associated with the induced representation
$Ind_{\widetilde{P}({\mathbb{A}})}^{Sp_4^{(3)}({\mathbb
A})}\overline{\Theta_{GL_2}^{(3)}}\delta_P^s$. Then it follows from the above, using
induction in stages, that the functions in
$E_{P}^{(3)}(h,s_2)$ are exactly the residues at $s_1=2/3$
of the functions in $E_B^{(3)}(h,s_1,s_2)$.
This means that we can obtain the representation $\Theta_{Sp_4}^{(3)}$ by
the residues of a space of Eisenstein series associated with a theta
representation of the Levi subgroup of a maximal parabolic. In fact
the same goes for the other maximal parabolic subgroup
$Q$  of $Sp_4$. Let $E_{Q}^{(3)}(h,s)$ denote the space of Eisenstein
series of $Sp_4^{(3)}({\mathbb{A}})$ associated with the induced
representation $Ind_{\widetilde{Q}({\mathbb{A}})}^{Sp_4^{(3)}({\bf
A})}\Theta_{SL_2}^{(3)}\delta_Q^s$. Here $\Theta_{SL_2}^{(3)}$ is
the theta representation of the three-fold cover of $SL_2$.
Then the representation $\Theta_{Sp_4}^{(3)}$
can be obtained as the residues at $s=2/3$ of the Eisenstein series in $E_{Q}^{(3)}(h,s)$.

A local constituent of this representation is constructed in a manner similar
to the two references above. Namely, let $F_\nu$ be a non-archimedean local field.
Consider the induced representation
$Ind_{\widetilde{B}(F_\nu)}^{Sp_4^{(3)}(F_\nu)}\chi_\Theta\delta_B^{1/2}$.
Here $\chi_\Theta(t)=|a|^{2/3}|b|^{1/3}$. Then one may define the
local representation $(\Theta_{Sp_4}^{(3)})_\nu$ as the image of
the intertwining operator from
$Ind_{\widetilde{B}(F_\nu)}^{Sp_4^{(3)}(F_\nu)}\chi_\Theta\delta_B^{1/2}$
to
$Ind_{\widetilde{B}(F_\nu)}^{Sp_4^{(3)}(F_\nu)}\chi_\Theta^{-1}\delta_B^{1/2}$.
Similarly for $\nu$ archimedean, so that $Sp_4^{(3)}(F_\nu)\cong Sp_4(\mathbb{C})\times \mu_3$,
there is a local theta representation obtained by smooth induction.
More precisely, the smooth induced module is irreducible in
this case, and is also the image of the intertwining
operator, as in \cite{K-P}, Theorem I.6.4 part (c).
Arguing as in the above two references one obtains that
$\Theta_{Sp_4}^{(3)}=\otimes_\nu (\Theta_{Sp_4}^{(3)})_\nu$. We omit
the details.

We return to the global situation. We will need to study the
constant terms of the representation $\Theta_{Sp_4}^{(3)}$ along the
unipotent radicals of the two maximal parabolic subgroups. We argue
in the same way as in \cite{B-F-G}, Prop.~3.4. Let $U(P)$
denote the unipotent radical of $P$. We consider the constant term
$$E_{P}^{(3),U(P)}(h,s)=\int\limits_{U(P)(F)\backslash U(P)({\mathbb
A})}E_P^{(3)}(uh,s)\,du.$$
The space of double cosets $P(F)\backslash
Sp_4(F)/P(F)$ contains three elements with representatives $e, w_2$
and $w[212]$. For $m\in GL_2$, let $i_P(m)$ denotes its image in the Levi
subgroup of $P$.  Then we obtain
$$E_{P}^{(3),U(P)}(i_P(m),s,f_s)=f_s(i_P(m))+E_{GL_2}(m,M_{w_2}f_s',s')+M_{w[212]}f_s(i_P(m)).$$
Here the Eisenstein series is formed with section $f_s\in Ind_{\widetilde{P}({\mathbb{A}})}^{Sp_4^{(3)}({\mathbb
A})}\overline{\Theta_{GL_2}^{(3)}}\delta_P^s$, the $M_w$ denote intertwining operators,
and $E_{GL_2}(m,M_{w_2}f_s',s')$
is the Eisenstein series on the 3-fold cover of $GL_2$
associated with the induction from the Borel with induction data
$\delta_B^{s'}\text{det}^{4/3}$ where $s'=3s-1$.  The specific section used is
$$f_s'(m)=\int\limits_{F\backslash {\mathbb{A}}}f_s\left (
\begin{pmatrix} 1&r&&\\ &1&&\\ &&1&-r\\ &&&1\end{pmatrix}i_P(m)\right
)dr$$ It is not hard to check that the first two terms are
holomorphic at $s=2/3$, the point where $E_{P}^{(3),U(P)}(h,s)$ has
a simple pole.

A similar argument applies to the second maximal parabolic $Q$. The space
of double cosets $Q(F)\backslash Sp_4(F)/Q(F)$ again consists of three elements, this time
represented by $e,w_1,w[121]$. Thus, when we
compute the constant term $E_{Q}^{(3),U(Q)}(i_Q(a,h),s)$ where $a\in
GL_1$, $h\in SL_2$, and $i_Q$ denotes the embedding into the Levi subgroup of $Q$, 
we get three terms.  Again, two of them will
contribute zero after taking the residue at $s=2/3$. We
summarize the above discussion in the following Proposition.  (We recall that by convention
that elements of $H(\mathbb{A})$ are embedded in the cover via the trivial section $\mathbf{s}$.)
\begin{proposition}\label{propconstant1}
\begin{enumerate}[(i)]
\item
Suppose that
$t(a,b)$ is in
$\widetilde{A}(\mathbb{A})$. Let $h=\text{diag}(a,b)\in
GL_2^{(3)}({\mathbb{A}})$.
Then for each vector $\theta_{Sp_4}^{(3)}\in \Theta_{Sp_4}^{(3)}$
there is a vector $\theta_{GL_2}^{(3)}\in \Theta_{GL_2}^{(3)}$ such
that
\begin{equation*}
\theta_{Sp_4}^{(3),U(P)}(t(a,b))=|\det h|\,
\overline{\theta}_{GL_2}^{(3)}(h).
\end{equation*}
\item
Suppose that $t(a,1)$ is in
$\widetilde{A}({\mathbb{A}})$.  Let $h\in SL_2^{(3)}({\mathbb{A}})$.
Then for each vector $\theta_{Sp_4}^{(3)}\in \Theta_{Sp_4}^{(3)}$ there is a vector
$\theta_{SL_2}^{(3)}\in\Theta_{SL_2}^{(3)}$ such that
\begin{equation*}
\theta_{Sp_4}^{(3),U(Q)}(t(a,1)i_Q(1,h))=|a|^{4/3}\,\theta_{Sp_4}^{(3),U(Q)}(i_Q(1,h))
=|a|^{4/3}\theta_{SL_2}^{(3)}(h).
\end{equation*}
\end{enumerate}
\end{proposition}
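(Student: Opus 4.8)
The plan is to obtain the constant terms of $\Theta_{Sp_4}^{(3)}$ by taking residues in the constant-term formulas already assembled above, following the template of \cite{B-F-G}, Prop.~3.4. Since forming the constant term along a unipotent radical commutes with taking residues in the spectral parameter, the constant term $\theta_{Sp_4}^{(3),U(P)}$ is the residue at $s=2/3$ of
\[
E_{P}^{(3),U(P)}(i_P(m),s)=f_s(i_P(m))+E_{GL_2}(m,M_{w_2}f_s',s')+M_{w[212]}f_s(i_P(m)).
\]
We are told the first two summands are holomorphic at $s=2/3$, the point at which the left-hand side has a simple pole; hence their residues vanish and
\[
\theta_{Sp_4}^{(3),U(P)}(i_P(m))=\text{res}_{s=2/3}\,M_{w[212]}f_s(i_P(m)).
\]
The entire content of part (i) is therefore to evaluate this single residue.

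For this I would analyze the long intertwining operator $M_{w[212]}$ through its poles, which are governed by the same ratio of Dedekind zeta functions that controls the poles of $E_B^{(3)}$ (displayed before the Definition, cf.\ \cite{K-P}, Prop.~II.1.2); the residue at $s=2/3$ realizes the projection of the induced module onto its theta quotient on the Levi. Concretely, $w[212]=w_2w_1w_2$ is the longest element of the Weyl group normalizing the $GL_2$-Levi of $P$, and its action reflects the inducing character, so that $\text{res}_{s=2/3}M_{w[212]}f_s$, restricted to $i_P(m)$, lies in a theta representation of this Levi. Two bookkeeping points then pin down the stated normalization. First, the modulus factor $|\det h|$ is read off from the exponents produced by $\delta_B$, $\delta_P$ and the reflection $w[212]$ evaluated at $(s_1,s_2)=(2/3,2/3)$. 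Second, the complex conjugate $\overline{\theta}_{GL_2}^{(3)}$ appears because, as recorded above, the restriction of $\sigma_{Sp_4}$ to the $GL_2$-Levi of $P$ is the conjugate cocycle $\overline{\sigma_{3,1}}$; thus the genuine function on $GL_2^{(3)}$ produced by the residue is a vector in $\overline{\Theta_{GL_2}^{(3)}}$ rather than $\Theta_{GL_2}^{(3)}$.

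Part (ii) is entirely parallel, using instead the double cosets $\{e,w_1,w[121]\}$ for $Q$: two of the three terms again vanish after applying $\text{res}_{s=2/3}$, and the survivor restricts on the $SL_2$-factor to a vector of $\Theta_{SL_2}^{(3)}$. The first displayed equality in (ii), that the $GL_1$-factor acts on $\theta_{Sp_4}^{(3),U(Q)}$ by $|a|^{4/3}$, I would obtain directly from the exponent of the residual datum: the constant term is an automorphic form on the Levi of $Q$ lying in the residual representation, so its $GL_1$-character is forced by the value of $\chi_{s_1,s_2}$ together with the modulus character $\delta_Q$ at $(s_1,s_2)=(2/3,2/3)$, consistent with the weight $\chi_\Theta$, which yields $|a|^{4/3}$; the second equality is then the identification of the $SL_2$-part with $\Theta_{SL_2}^{(3)}$.

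I expect the main obstacle to be the identification step in part (i): showing that $\text{res}_{s=2/3}M_{w[212]}f_s$ is genuinely a nonzero multiple of a vector in $\overline{\Theta_{GL_2}^{(3)}}$, with the precise normalization, rather than merely landing in some induced module with the correct central character. This requires controlling the residue of the long intertwining operator on the cover---verifying that the relevant factor of the zeta ratio contributes the simple pole and that the leading term is exactly the theta projection---which is the metaplectic analogue of the computation in \cite{B-F-G}, Prop.~3.4. One must check that this goes through for the cocycle $\sigma_{Sp_4}$ with no spurious factors, which is guaranteed by the fact, noted in Section~\ref{exchange}, that the covering cocycle contributes no nontrivial factors in the unipotent manipulations underlying the constant-term computation.
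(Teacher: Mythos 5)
Your proposal matches the paper's proof in essentially every step: both take the residue of the constant-term decomposition along $U(P)$ (resp.\ $U(Q)$), discard the two summands holomorphic at $s=2/3$, and identify $\text{res}_{s=2/3}M_{w[212]}f_s$ (resp.\ the $M_{w[121]}$ term) via the mapping property of the intertwining operator, which yields the factor $\delta_P^{1/3}(t(a,b))=|\det h|$ (resp.\ $\delta_Q^{1/3}(t(a,1))=|a|^{4/3}$) together with a vector of $\overline{\Theta_{GL_2}^{(3)}}$ (resp.\ $\Theta_{SL_2}^{(3)}$). The one obstacle you flag is lighter than you fear: by the induction-in-stages discussion already in place, $f_s$ is a section of $Ind_{\widetilde{P}(\mathbb{A})}^{Sp_4^{(3)}(\mathbb{A})}\overline{\Theta_{GL_2}^{(3)}}\delta_P^s$, and $M_{w[212]}$ carries this space onto $Ind_{\widetilde{P}(\mathbb{A})}^{Sp_4^{(3)}(\mathbb{A})}\overline{\Theta_{GL_2}^{(3)}}\delta_P^{1-s}$ with the Levi datum unchanged, so no separate ``theta projection'' on the Levi or pole analysis of the long intertwining operator is required --- exactly the shortcut the paper's two-line proof takes.
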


\begin{proof}
The Proposition is a standard consequence of the constant term calculations above.
For example to prove the first part we start with the identity
\begin{equation*}
\theta_{Sp_4}^{(3),U(P)}(t(a,b))=\text{res}_{s=2/3}M_{w[212]}f_s(t(a,b)).
\end{equation*}
Since the intertwining operator $M_{w[212]}$ maps
$Ind_{\widetilde{P}({\mathbb{A}})}^{Sp_4^{(3)}({\mathbb
A})}\overline{\Theta_{GL_2}^{(3)}}\delta_P^s$ onto
$Ind_{\widetilde{P}({\mathbb{A}})}^{Sp_4^{(3)}({\mathbb
A})}\overline{\Theta_{GL_2}^{(3)}}\delta_P^{1-s}$ it follows that at
$s=2/3$ we obtain the factor of $\delta_P^{1/3}(h)=|\text{det}\ h|$.

The second part follows in a similar way.
\end{proof}

The situation with the group $SO_7$ is similar. In fact, the
definition and the basic properties follow exactly as in
\cite{B-F-G}.  Start with the global induced
representation $Ind_{\widetilde{B}({\mathbb{A}})}^{SO_7^{(4)}({\mathbb
A})}\chi_{s_1,s_2,s_3}\delta_B^{1/2}$. Here $B$ is the Borel
subgroup of $SO_7$, and 
$\chi_{s_1,s_2,s_3}(t(a,b,c))=|a|^{s_1}|b|^{s_2}|c|^{s_3}$. Then form the space of
Eisenstein series $E_B^{(4)}(h,s_1,s_2,s_2)$ associated with this
induced representation. Then, as in \cite{B-F-G}, the poles of this
Eisenstein series are determined by the poles of
$$\frac{\prod_{1\le i<j\le
3}\zeta(4(s_i-s_j))\zeta(4(s_i+s_j))\prod_{1\le i\le 3}\zeta(4s_i)}
{\prod_{1\le i<j\le
3}\zeta(4(s_i-s_j)+1)\zeta(4(s_i+s_j)+1)\prod_{1\le i\le
3}\zeta(4s_i+1)}.$$ This quotient has a multiple residue at $s_1=3/4,
s_2=2/4$ and $s_3=1/4$. We denote by $\Theta_{SO_7}^{(4)}$ the
residue representation. Let
$\chi_\Theta(t(a,b,c))=|a|^{3/4}|b|^{2/4}|c|^{1/4}$. Then
$\Theta_{SO_7}^{(4)}$ is a sub-quotient of $Ind_{\widetilde{B}({\mathbb
A})}^{SO_7^{(4)}(\mathbb{A})}\chi_\Theta\delta_B^{1/2}$. It can also be
realized as a subrepresentation of $Ind_{\widetilde{B}({\mathbb
A})}^{SO_7^{(4)}({\mathbb{A}})}\chi_\Theta^{-1}\delta_B^{1/2}$.

The group $SO_7$ has three maximal parabolic subgroups. By analogy with
the $Sp_4$ case, let us now denote by
$P$ the maximal parabolic subgroup whose Levi factorization is given (up to isomorphism) 
by $GL_3\,U(P)$, by $Q$ the maximal parabolic with Levi factorization
$Q=(GL_2\times SO_3)U(Q)$ and by $R$ the maximal parabolic with factorization
$R=(GL_1\times SO_5)U(R)$, and by $i_P$ (resp.\ $i_Q$, $i_R$) the corresponding
embedding of $GL_3$ (resp.\ $GL_2\times SO_3$, $GL_1\times SO_5$) into 
the corresponding Levi subgroup in $SO_7$.  For $g_1,g_2\in GL_3$, the cocycle $\sigma_{SO_7}(i_P(g_1),i_P(g_2))$ is 
the standard cocycle $\sigma_4(g_1,g_2)$ for the 4-fold cover of $GL_3$, twisted by
the eighth-order Hilbert symbol $(\det g_1,\det g_2)_8^{-1}$.  It is not difficult to
see that the construction of the theta representation in \cite{K-P} goes through
without essential change in this case.  In particular, the Whittaker model for this representation
is not unique; the argument is exactly that given in Theorem I.3.5 there using $c=-1/2$,
and Corollary I.3.6 holds. We denote this representation $\Theta_{GL_3}^{(4)}$.

Using induction in stages we can realize the representation
$\Theta_{SO_7}^{(4)}$ as a residue of a space of Eisenstein series
associated with an induced representation from a maximal parabolic
subgroup. Starting with the group $P$, we define the space of Eisenstein
series $E_P^{(4)}(h,s)$ associated with $Ind_{\widetilde{P}({\mathbb
A})}^{SO_7^{(4)}({\mathbb{A}})}\Theta_{GL_3}^{(4)}\delta_P^s$. Then, it
follows similarly to the above that these Eisenstein series have
simple poles at $s=2/3$. The residue representation is
$\Theta_{SO_7}^{(4)}$. For the group $Q$ we have a similar result.
If we write the corresponding Eisenstein series as $E_Q^{(4)}(h,s)$,
then $\Theta_{SO_7}^{(4)}$ is the space of residues of these Eisenstein series at
the point $\delta_Q^{21/32}$.  Finally, for the Eisenstein series
$E_R^{(4)}(h,s)$ the representation $\Theta_{SO_7}^{(4)}$ is  the space of
residues at the point $\delta_R^{13/20}$.

Using this, we may establish a result analogous to Prop.~\ref{propconstant1}.
The third part of the following
Proposition  follows by an argument similar to \cite{B-F-G}, Prop.~3.4.
We omit the details.  

\begin{proposition}\label{propconstant2}

\begin{enumerate}[(i)]
\item
Suppose that
$t(a,b,c)$ is in  $\widetilde{A}({\mathbb{A}})$.
Let $h=\text{diag}(a,b,c)\in GL_3^{(4)}({\mathbb{A}})$. Then for each vector
$\theta_{SO_7}^{(4)}\in \Theta_{SO_7}^{(4)}$ there is a vector
$\theta_{GL_3}^{(4)}\in \Theta_{GL_3}^{(4)}$ such that
\begin{equation*}
\theta_{SO_7}^{(4),U(P)}(t(a,b,c))=|\det h|\,\theta_{GL_3}^{(4)}(h).
\end{equation*}
\item
Suppose that $t(a,a,1)$ is
in $\widetilde{A}({\mathbb{A}})$.  Let $h\in SO_7^{(4)}(\mathbb{A})$. Then, for all
$\theta_{SO_7}^{(4)}\in \Theta_{SO_7}^{(4)}$ we have
\begin{equation*}
\theta_{SO_7}^{(4),U(Q)}(t(a,a,1)\,h)=|a|^{9/8}\,\theta_{SO_7}^{(4),U(Q)}(h).
\end{equation*}
\item
Suppose that $t(a,1,1)$  is in
$\widetilde{A}({\mathbb{A}})$. 
Let $h\in SO_5^{(4)}({\mathbb A})$.  Then for each vector 
$\theta_{SO_7}^{(4)}\in \Theta_{SO_7}^{(4)}$ there is a vector $\theta_{SO_5}^{(4)}\in\Theta_{SO_5}^{(4)}$ such that
\begin{equation*}
\theta_{SO_7}^{(4),U(R)}(t(a,1,1)\,i_R(1,h))=|a|^{7/4}\,\theta_{SO_7}^{(4),U(R)}(i_R(1,h))=|a|^{7/4}\,\theta_{SO_5}^{(4)}(h).
\end{equation*}
\end{enumerate}

\end{proposition}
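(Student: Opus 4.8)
The plan is to mirror the strategy used for $Sp_4$ in Proposition~\ref{propconstant1}, namely to compute the constant term of $\Theta_{SO_7}^{(4)}$ along the unipotent radical of the relevant parabolic and then extract the residue. For each of the three parts I would realize $\Theta_{SO_7}^{(4)}$ as the residue of the Eisenstein series $E_P^{(4)}$, $E_Q^{(4)}$, and $E_R^{(4)}$ attached to the maximal parabolic in question (at the points $s=2/3$, $\delta_Q^{21/32}$, and $\delta_R^{13/20}$ respectively, as established just above). The starting point in each case is the Bruhat decomposition $P'(F)\backslash SO_7(F)/P'(F)$ for the relevant parabolic $P'$; this double coset space is finite, and the constant term along $U(P')$ decomposes as a sum of terms indexed by the representatives, each being an intertwining operator applied to the section. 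Following the computation of \cite{B-F-G}, Prop.~3.4, one checks that after taking the residue at the indicated point only the term corresponding to the long-element representative survives, the remaining terms being holomorphic there and hence killed by the residue.

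For part (i), I would proceed exactly as in the $Sp_4$ argument. The long intertwining operator maps $Ind_{\widetilde{P}}^{SO_7^{(4)}}\Theta_{GL_3}^{(4)}\delta_P^s$ to the corresponding induced space with $\delta_P^s$ replaced by $\delta_P^{1-s}$, so evaluating at the residue point produces a power of $\delta_P$ restricted to the Levi $GL_3$. Computing this power on $t(a,b,c)$ and matching it against $\delta_P^{1/3}(t(a,b,c))=|\det h|$ with $h=\mathrm{diag}(a,b,c)$ gives the stated factor $|\det h|$; the surviving residue is then a vector in $\Theta_{GL_3}^{(4)}$, which is exactly the claim. Parts (ii) and (iii) are the torus-eigenvalue statements along $U(Q)$ and $U(R)$. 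Here the key computation is simply the value of the relevant modular character restricted to the split central torus of the Levi: one computes $\delta$ and the appropriate half-integral power arising from the residue point, evaluates it at $t(a,a,1)$ (resp.\ $t(a,1,1)$), and reads off the exponent $9/8$ (resp.\ $7/4$). For part (iii), as in part (i), one must additionally verify that the residue descends to a genuine vector of $\Theta_{SO_5}^{(4)}$ on the complementary $SO_5$ factor, using the embedding \eqref{so5-to-so7} and the realization of $\Theta_{SO_5}^{(4)}$ as a residue of the corresponding Eisenstein series on $SO_5^{(4)}$.

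The main obstacle I anticipate is the bookkeeping of the metaplectic cocycle under the intertwining operators and the verification that the relevant terms in the constant term are genuinely holomorphic at the residue point. Because the cover of $SO_7$ is built by restricting the cover of $SL_7$ via the conjugation by $w'$, one must track how $\sigma_{SO_7}$ restricts to each Levi (as was done above for $GL_3$ inside $P$) and confirm that the residue of the long-element term lands in the theta representation of the descended group rather than some twist of it. This is where the precise normalization of the cover — in particular the fact noted earlier that the fourth power of $\sigma_{SO_7}$ is almost trivial and that the restriction to $GL_3$ is the standard $GL_3^{(4)}$ cocycle twisted by $(\det g_1,\det g_2)_8^{-1}$ — enters essentially, and matching exponents correctly requires care. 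Once the cocycle bookkeeping is settled, the exponent computations $9/8$ and $7/4$ and the identification of the descended vectors are routine, paralleling \cite{B-F-G}, Prop.~3.4, so I would relegate those to a remark and concentrate the writeup on the double-coset analysis and the holomorphy of the non-surviving terms.
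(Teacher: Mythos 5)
Your proposal takes essentially the same route as the paper: the authors establish Proposition~\ref{propconstant2} precisely by realizing $\Theta_{SO_7}^{(4)}$ as residues of the Eisenstein series $E_P^{(4)}$, $E_Q^{(4)}$, $E_R^{(4)}$ from the three maximal parabolics, repeating the double-coset and intertwining-operator analysis of Proposition~\ref{propconstant1} (with part (iii) following the argument of \cite{B-F-G}, Prop.~3.4), and observing that the resulting powers of $|\det h|$ and $|a|$ are exactly the values of $\delta^{1-s_0}$ at the relevant pole. The details you propose to supply — holomorphy of the non-surviving double-coset terms at the residue point, the cocycle bookkeeping on the Levi, and the identification of the residual Levi component with $\Theta_{GL_3}^{(4)}$ resp.\ $\Theta_{SO_5}^{(4)}$ — are precisely the ones the paper omits, so your plan is the intended proof.
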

We remark that the power $|\det h|$ in the first part and the powers
of $|a|$ in the second and third parts are just the values of $\delta^{1-s_0}$
where $s_0$ is the point where the corresponding Eisenstein series
has a pole.

There are also local analogues of 
Props.~\ref{propconstant1} and \ref{propconstant2}.  We shall need only the analogue
of Prop.~\ref{propconstant2}, (iii), in the sequel
so limit ourselves to this case.
For each nonarchimedean place $\nu$ let $(\Theta_{SO_7}^{(4)})_\nu$ denote the local theta representation.
This representation is 
the image of the intertwining operator from
$Ind_{\widetilde{B}(F_\nu)}^{{SO_7}^{(4)}(F_\nu)}\chi_\Theta\delta_B^{1/2}$
to
$Ind_{\widetilde{B}(F_\nu)}^{{SO_7}^{(4)}(F_\nu)}\chi_\Theta^{-1}\delta_B^{1/2}$.
Let $J_{U(R)}$ be the Jacquet module with respect to the unipotent radical of the parabolic subgroup $R$.  Recall that this is a functor
from the category of smooth representations of finite length of $SO_7^{(4)}(F_\nu)$ to the category of
smooth representations of finite length of $\widetilde{M_R}(F_\nu)$, the full inverse image of the Levi subgroup
of $R(F_\nu)$ in the covering
group $SO_7^{(4)}(F_\nu)$.  This inverse image may be identified
with the quotient 
of the direct product $GL^{(4)}_1(F_\nu)\times SO^{(4)}_5(F_\nu)$ obtained by identifying the central subgroups.
Then, as a representation of $SO_5^{(4)}(F_\nu)$ the Jacquet module 
$J_{U(R)}(\Theta_{SO_7}^{(4)}(F_\nu))$ is the theta representation $\Theta_{SO_5}^{(4)}$. The proof of this 
is the same as \cite{B-F-G}, Theorem 2.3, which treats the same statement for the double (rather than four-fold)
cover of an odd orthogonal group.

Let $n$ be odd. We end this section with the construction of induced
representations on the even-degree cover $SL_2^{(2n)}$ of $SL_2$,
following \cite{B-F-H}. Suppose that the embeddings of $\mu_n$ and
$\mu_{2n}$ into $\mathbb{C}$ are chosen so that the Hilbert
symbols satisfy $(a,b)_{2n}=(a,b)_2\,(a,b)_n$. Then the cocycle
$\sigma_{2n}$ on $SL_2$ satisfies $\sigma_{2n}=\sigma_2\sigma_n$.
Let $B=TU$ now denote the Borel subgroup of $SL_2$ and $A$ denote
any maximal subgroup of $T$ which satisfies $\sigma_n(a_1,a_2)=1$
for all $a_1,a_2\in {A}$. Then $\widetilde{A}$ is a maximal abelian
subgroup of $\widetilde{T}$.  Let $\chi$ denote a character of the center of $\widetilde{T}$.
We extend it to a character $\widetilde{\chi}$ of $\widetilde{A}$:
if $b=\left( \left(\begin{smallmatrix} a&\\
&a^{-1}\end{smallmatrix}\right),\zeta\right )$ is in
$\widetilde{A}$, define $\widetilde{\chi}(b)=\chi(a)\gamma(a)\zeta$.
Here $\zeta\in\mu_{2n}$, and $\gamma$ is the Weil factor
attached to the nontrivial additive character $\psi$. As in \cite{B-F-H} one
may check that $\widetilde{\chi}$ is indeed a genuine character of
$\widetilde{A}$. Inducing from $\widetilde{A}$ to $\widetilde{T}$
and extending in the usual way to $\tilde{B}$ we can then form the
induced representation
$Ind_{\widetilde{B}}^{SL_2^{(2n)}}\widetilde{\chi}$.

The particular case of interest for us below is the case of the theta representation.
Let $n=3$, and let $\chi_s=\delta_B^{s}$. We then form the global
induced  representation $Ind_{B^{(6)}({\mathbb A})}^{SL_2^{(6)}({\mathbb
A})}\widetilde{\chi}_s$. Attached to this representation is the Eisenstein series $E^{(6)}(g,s)$ on
$SL_2^{(6)}({\mathbb A})$. As in the previous cases it is not hard to
check that this Eisenstein series has a simple pole at $s=7/12$;
we denote the residue representation by $\Theta_{SL_2}^{(6)}$. See also
Conjecture~\ref{conjecture1} below.

\subsection{ Fourier Coefficients of the Theta Representation}

In this subsection we study the Fourier coefficient attached to the theta representations
of the various groups under consideration here.  Our goal here is to determine the unipotent orbit attached to
each such representation in the sense of \cite{B-F-G}, Section 4,  where
such a determination is given for the theta function on the double cover of an odd orthogonal group.
 We start with

\begin{lemma}\label{lem1}
The theta representations $\Theta_{Sp_4}^{(3)}$, $\Theta_{SO_5}^{(4)}$, and
$\Theta_{SO_7}^{(4)}$ are not generic.
\end{lemma}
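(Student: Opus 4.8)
The plan is to show that each theta representation has no nonzero Whittaker–Fourier coefficient with respect to the standard maximal unipotent subgroup and a character in general position. Since these representations are defined as residues of Borel Eisenstein series (or, equivalently, as images of standard intertwining operators from a principal series $Ind_{\widetilde B}^{H^{(n)}}\chi_\Theta \delta_B^{1/2}$), the natural tool is the local analysis of the Jacquet module followed by a global vanishing argument, or more directly a computation of the Whittaker functional on the inducing data.

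**First** I would reduce genericity to a local question. A generic global automorphic representation must have a nonzero Whittaker coefficient, hence at almost every place the local component must admit a nonzero Whittaker functional. So it suffices to prove that the local theta representation $(\Theta_H^{(n)})_\nu$ is \emph{non}-generic at one (indeed, almost every unramified) place $\nu$. I would fix such a place where $|n|_\nu = 1$, so that the cover splits over $SL_r(O_\nu)$ and the standard theory of Kazhdan–Patterson \cite{K-P} applies. The local theta representation is realized as the image of the intertwining operator $M\colon Ind_{\widetilde B}^{H^{(n)}}\chi_\Theta\delta_B^{1/2}\to Ind_{\widetilde B}^{H^{(n)}}\chi_\Theta^{-1}\delta_B^{1/2}$, exactly as recorded in the construction above.

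**The key step** is a dimension count on Whittaker functionals for genuine principal series of the cover, following the formula of Kazhdan–Patterson (and its analogues for $Sp_4^{(3)}$ and $SO_7^{(4)}$). For a genuine principal series induced from a character $\chi$ on the covering torus, the space of Whittaker functionals has dimension equal to the number of elements of the Weyl group fixing $\chi$ up to the relevant action, or more precisely is governed by the index $[\widetilde T : Z(\widetilde T)]$ and the orbit of $\chi$. The decisive point is that the character $\chi_\Theta$ is \emph{exceptional} (in the sense of Kazhdan–Patterson): it lies at a distinguished point where many Weyl translates coincide, and the image of the long intertwining operator $M$ is precisely the quotient on which all but the "trivial" Whittaker functionals die. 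Concretely, I expect to show that the Whittaker functional on the source space $Ind\,\chi_\Theta\delta_B^{1/2}$ factors through the normalized intertwining operator whose normalizing factor — built from the ratio of zeta functions recorded before each Definition — vanishes at the exceptional point $\chi_\Theta$, so that the induced functional on the image representation is zero. This is the same mechanism by which theta representations on covers of $GL_r$ fail to be generic for $n > r+1$, and it is what the excerpt alludes to with "the Whittaker functional of the local theta representation is not unique."

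**The main obstacle** will be making the last vanishing precise for $Sp_4^{(3)}$ and $SO_7^{(4)}$ rather than citing the $GL_r$ case directly: one must verify that at the specific exceptional points ($\chi_\Theta(t(a,b)) = |a|^{2/3}|b|^{1/3}$ for $Sp_4$, and the corresponding character for $SO_7$) the relevant Gindikin–Karpelevich factors in the normalization of $M$ genuinely have the zero that kills the Whittaker functional, keeping careful track of the Weil-factor and Hilbert-symbol contributions in the cover. For $\Theta_{SO_5}^{(4)}$ I would instead use the Jacquet-module identity stated just above Lemma~\ref{lem1}, namely $J_{U(R)}(\Theta_{SO_7}^{(4)}) = \Theta_{SO_5}^{(4)}$: a Whittaker functional on $SO_5^{(4)}$ would pull back, via the tower of parabolics, to a degenerate Whittaker functional on $\Theta_{SO_7}^{(4)}$ along an orbit at least as large as the generic one, contradicting the non-genericity of $\Theta_{SO_7}^{(4)}$ once that is established. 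Thus I would prove the $Sp_4^{(3)}$ and $SO_7^{(4)}$ cases by the local intertwining-operator argument and then deduce the $SO_5^{(4)}$ case as a corollary.
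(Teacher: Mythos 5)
Your skeleton matches the paper's proof in outline (reduce to a local statement at a place with $|n|_\nu=1$, realize the local theta representation as the image of the long intertwining operator on the exceptional principal series, and analyze Whittaker functionals following Kazhdan--Patterson), but your decisive mechanism is not the right one and would fail. In the metaplectic setting the space of Whittaker functionals on the full principal series $V(\omega')$ has dimension $[\widetilde{T}:\widetilde{A}]$ (the index of a maximal abelian subgroup, not of $Z(\widetilde{T})$ as you write), with a basis $\lambda_\eta$ indexed by $\eta\in\widetilde{A}\backslash\widetilde{T}$. A Whittaker functional on the image $V_0(\omega')$ corresponds to a combination $\sum_\eta c(\eta)\lambda_\eta$ that vanishes on $\ker I_{w_0}$, i.e., to a solution of a linear system whose coefficients are the matrix entries $\tau_w(\eta,\eta')$ of \cite{K-P}, Lemma I.3.3 and Cor.\ I.3.4. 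The vanishing of a single scalar Gindikin--Karpelevich-type normalizing factor cannot kill this whole multi-dimensional space, and exceptionality alone does not imply non-genericity: the paper itself uses that $\Theta_{GL_2}^{(3)}$ is generic (proof of Lemma~\ref{nonzerosp4}) and that $\Theta_{GL_3}^{(4)}$ is generic with non-unique Whittaker model (nonvanishing step of Prop.~\ref{prop3}), and both are exceptional in the sense of \cite{K-P}. What the paper actually does, following \cite{K-P} Theorem I.3.5, is bound the dimension of Whittaker functionals on $V_0(\omega')$ by the number of \emph{free} orbits of the Weyl group $W$ on $\mathbb{Z}^r/n\mathbb{Z}^r$, with the root-length-dependent modulus $n(\alpha)$ ($3$ for $Sp_4$; $4$ for long and $2$ for short roots in the orthogonal cases), and then checks combinatorially that for $Sp_4^{(3)}$, $SO_5^{(4)}$ and $SO_7^{(4)}$ there are no free orbits. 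That count is the real content of the lemma and is absent from your argument.

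Your treatment of $\Theta_{SO_5}^{(4)}$ is moreover circular. A Whittaker functional on $\Theta_{SO_5}^{(4)}=J_{U(R)}(\Theta_{SO_7}^{(4)})$ pulls back to a coefficient of $\Theta_{SO_7}^{(4)}$ whose character is generic on the maximal unipotent of $SO_5$ but \emph{trivial} on $U(R)$; this coefficient is attached to the orbit $(51^2)$, which is strictly \emph{below} the regular orbit $(7)$ in the partial order, not ``at least as large as the generic one.'' Non-genericity of $\Theta_{SO_7}^{(4)}$ only gives vanishing for $(7)$ and says nothing about $(51^2)$; indeed, in Prop.~\ref{prop3} the paper proves the $(51^2)$ vanishing \emph{using} the non-genericity of $\Theta_{SO_5}^{(4)}$ supplied by Lemma~\ref{lem1}, so your intended implication runs in the wrong direction. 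The repair is simply to run the same free-orbit count directly for $SO_5^{(4)}$ (rank $2$, Weyl group of order $8$), which is how the paper's proof covers all three groups uniformly.
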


This Lemma is a special case of a more general result of Friedberg, Goldberg and
Szpruch \cite{F-G-S}.  The proof is similar to that for covers of $GL_r$ \cite{K-P}.
For the convenience of the reader we give a brief sketch here.

\begin{proof}
It suffices to show that there is no local Whittaker functional.  As in subsection~\ref{subsec31}, let
$n=3$ or $n=4$ for the groups under consideration, as shown.  Let $\nu$ be a place
such that $|n|_\nu=1$, and
let $F_\nu$ be the corresponding local field.  Fix one of the above covering groups over $F_\nu$.
We use the same notation for the principal series as in subsection~\ref{subsec31}.
Let $\omega$ be a genuine quasicharacter of $Z(\widetilde{T})$, extend to
a character $\omega'$ of a maximal abelian subgroup $\widetilde{A}$ containing $Z(\widetilde{T})$
as in \cite{K-P},  Sect.~I.1, and form
the principal series, denoted in this proof by $V(\omega')$.
The dimension of the space of Whittaker functionals on $V(\omega')$ is easily seen
to be $[\tilde{T}:\widetilde{A}]$, and one can write down a basis by means of the regularizations $\lambda_\eta$ of standard
Whittaker-type integrals
$$\int_{U} f(\eta w_0^{-1}u)\,\bar\psi(u)\,du$$
as $\eta$ runs over the quotient $\widetilde{A}\backslash \tilde{T}$.  Here $\psi$ is a nondegenerate
character of the maximal unipotent subgroup $U$. One then considers the functionals
$f\to\lambda_\eta\circ I_w(f)$ with $\lambda_\eta$ the corresponding Whittaker integral for $V({\omega'}^w)$.
These may be expressed (on a Zariski dense set) as linear combination $\sum_{\eta'} \tau_w(\eta,\eta')\lambda_{\eta'}$
(compare \cite{K-P}, pp.\ 75-76).  The coefficients $\tau_w$ are seen to satisfy certain relations as in \cite{K-P}, Lemma I.3.3.

Suppose now that the quasicharacter
$\omega$ is exceptional.  In our context, this means that
$\omega_\alpha^{n(\alpha)}=|~|_\nu$ for all positive
roots $\alpha$, where $\omega_\alpha$ is the composition of $\omega$ with the canonical embedding
attached to $\alpha$, and where in the symplectic case $n(\alpha)=3$ and in the orthogonal group case $n(\alpha)=4$ if $\alpha$
is long and $2$ if $\alpha$ is short.  (Compare \cite{K-P}, pg.\ 71.)
These are exactly the characters considered in subsection~\ref{subsec31} above.
Then the local exceptional representation $V_0(\omega')$ is the image of $V(\omega')$ under the intertwining operator $I_{w_0}$ attached
to the long Weyl element $w_0$ on $V(\omega')$.
(Since $\omega$ is regular the representations $V(\omega')$, $V_0(\omega')$ are independent of the choice of the extension $\omega'$, as in \cite{K-P}, Prop.~I.2.2.)
Now a Whittaker functional on $V_0(\omega')$ pulls back to one on $V(\omega')$.
But every Whittaker functional on $V(\omega')$ may be written as
$\sum_{\eta\in \widetilde{A}\backslash \tilde{T}} c(\eta) \lambda_\eta$ where $c:\tilde{T}\to\mathbb{C}$ satifies
$c(a t)=\delta_B(a)^{-1/2} c(t)$ for $a\in \tilde{A}$.
The Whittaker functionals in $V_0(\omega')$ are those pulling back to ones on $V(\omega')$
that are zero on the kernel of $I_{w_0}$.  This gives a system of equations for the $c(\eta)$, which may be expressed
using the coefficients $\tau_w(\eta,\eta')$ (see \cite{K-P}, Cor.\ I.3.4.).  This allows one to count the dimension of the space of
Whittaker functionals on $V_0(\omega')$.  As in \cite{K-P}, Theorem I.3.5, this dimension
is seen to be at most the number of free orbits of the Weyl group $W$ acting on the quotient lattice $\mathbb{Z}^r/n\mathbb{Z}^r$,
where $r$ is the rank of the group.
In the cases at hand it is easy to check that there are no free orbits, and so no Whittaker functionals.
\end{proof}

\begin{proposition}\label{prop3}
The unipotent orbit attached to $\Theta_{Sp_4}^{(3)}$ is $(2^2)$ and the unipotent orbit attached to $\Theta_{SO_7}^{(4)}$ is $(3^21)$.
\end{proposition}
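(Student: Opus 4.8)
To show that $\mathcal O=(2^2)$ is attached to $\Theta_{Sp_4}^{(3)}$ and $\mathcal O=(3^21)$ is attached to $\Theta_{SO_7}^{(4)}$, I must verify two things in each case: first, that all Fourier coefficients associated to unipotent orbits that are larger than or incomparable to $\mathcal O$ vanish identically; and second, that some Fourier coefficient attached to $\mathcal O$ itself is nonzero. The representations are residual, so the natural engine for the vanishing statements is the interplay between the constant-term information already computed in Propositions~\ref{propconstant1} and \ref{propconstant2}, the root-exchange mechanism of Section~\ref{exchange}, and Lemma~\ref{lem10}. The key conceptual input is Lemma~\ref{lem1}: the theta representations are not generic, which immediately kills the principal (regular) orbit — $(4)$ for $Sp_4$ and $(7)$ for $SO_7$ — and more generally provides the base case from which one propagates vanishing downward through the dominance order.

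**Step-by-step.** First I would enumerate, for each group, the unipotent orbits lying above or incomparable to the target orbit in the partial order of Collingwood--McGovern \cite{C-M}, Section 5.1. For $Sp_4$ the orbits are $(1^4), (2^2), (2^21)$ — wait, I must use the symplectic partitions: $(1^4),(2^2),(21^2),(4)$ with the constraint that odd parts have even multiplicity; the orbits strictly above $(2^2)$ are just $(4)$, so the main content is showing the $(4)$ (generic) coefficient vanishes, which is exactly Lemma~\ref{lem1}, together with checking any incomparable orbit. For $SO_7$ the orthogonal partitions above or incomparable to $(3^21)$ are $(7)$, $(5,1^2)$, $(3^21)$'s neighbors, and $(3,2^2)$; I would list these explicitly and treat each. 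For each such orbit $\mathcal O'$ I would write the corresponding Fourier integral $\theta^{U',\psi_{U'}}$, then use root exchange to replace the nilpotent datum by a conjugate one that exposes a unipotent subgroup along which I can integrate and invoke the constant-term formulas. The strategy is that integrating $\theta$ against a generic character on a unipotent overgroup forces, after exchanging roots, an inner integration over $U(P)$ or $U(Q)$ (resp.\ $U(R)$) against a nontrivial character, which by Propositions~\ref{propconstant1}/\ref{propconstant2} reduces to a Fourier coefficient of the smaller-rank theta $\Theta_{GL_2}^{(3)}$, $\Theta_{SL_2}^{(3)}$ (resp.\ $\Theta_{GL_3}^{(4)}$, $\Theta_{SO_5}^{(4)}$), whose own genericity is already controlled.

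**Nonvanishing.** For the second half — exhibiting a nonzero coefficient attached to $\mathcal O$ — I would again use the constant-term reduction but now along the trivial (or appropriate degenerate) character so that Propositions~\ref{propconstant1}(i) and \ref{propconstant2}(i) feed back a genuinely nonzero theta function on the Levi. Concretely, the $(2^2)$-coefficient of $\Theta_{Sp_4}^{(3)}$ unfolds, via the $P$-parabolic, to an expression involving $\overline{\theta}_{GL_2}^{(3)}$, and nonvanishing follows because $\Theta_{GL_2}^{(3)}$ is nonzero; analogously the $(3^21)$-coefficient of $\Theta_{SO_7}^{(4)}$ unfolds through $R$ and $P$ to nonzero data from $\Theta_{SO_5}^{(4)}$ and $\Theta_{GL_3}^{(4)}$. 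The nonvanishing of these smaller theta representations is part of their construction as residues of Eisenstein series with a genuine pole.

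**Main obstacle.** The hardest part will be the vanishing for the incomparable orbits in the $SO_7$ case, where the dominance order is not linear and there is no single parabolic whose constant term directly detects the orbit. There I expect to need a chain of root exchanges, carefully arranged so that the hypotheses of Lemma~\ref{lem10} (the commutation relations among $x_\alpha,x_\beta,x_\gamma$ and the requisite left-invariance of the intermediate integral $L$) are genuinely satisfied at each step; verifying those commutator identities inside $SO_7$, and confirming that the cocycle contributes no obstruction (as noted in Section~\ref{exchange}), is the delicate bookkeeping. A secondary subtlety is ensuring that the characters $\psi_{U'}$ chosen to represent the larger orbits are the standard ones, so that the reduction lands on a \emph{generic} character of the Levi's unipotent radical and one may legitimately invoke Lemma~\ref{lem1} for the smaller group; a careless choice of representative within the orbit could obscure the vanishing and would have to be conjugated into standard position first.
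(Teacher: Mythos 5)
The serious gap is in your treatment of the $(51^2)$ vanishing for $SO_7$. The Fourier coefficients attached to that orbit come in a family indexed by $\alpha\in F^\times/(F^\times)^2$, via the characters $\psi_{U',\alpha}(u)=\psi(u_{1,2}+u_{2,3}-\tfrac{1}{2}\alpha u_{2,5})$ on the unipotent radical $U'$ of the parabolic with Levi $GL_1^2\times SO_3$, and only the trivial square class can be conjugated by $SO_3(F)$ into the standard character. Your closing remark that a non-standard representative ``would have to be conjugated into standard position first'' is precisely what is impossible over $F$ when $\alpha$ is not a square: there are several rational orbits of characters, and the global root-exchange/constant-term chain you describe only disposes of $\alpha\in(F^\times)^2$. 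The paper closes the remaining classes by a purely \emph{local} argument that is absent from your plan: choose an unramified place $\nu$ at which $\alpha$ becomes a square in $F_\nu^\times$ and show that the twisted Jacquet module $J_{U'_1,\psi_{U'_1}}\bigl((\Theta_{SO_7}^{(4)})_\nu\bigr)$ vanishes, using the exchange-of-roots lemma for Jacquet modules of \cite{G-R-S1}, Section 2.2, together with the local analogue of Prop.~\ref{propconstant2}(iii), namely that $J_{U(R)}\bigl((\Theta_{SO_7}^{(4)})_\nu\bigr)$ is the local theta representation of $SO_5^{(4)}(F_\nu)$, whose Whittaker functional vanishes by Lemma~\ref{lem1}. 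Without this step your argument establishes the $(51^2)$ vanishing for only one square class, and the proposition is not proved.

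Two further corrections. For $Sp_4$ the paper does not establish nonvanishing at $(2^2)$ by unfolding: it invokes the general result of \cite{G1}, Theorem 3.1, that a non-generic representation of $Sp_4^{(3)}({\mathbb A})$ automatically has a nonzero Fourier coefficient on the orbit $(2^2)$, so Lemma~\ref{lem1} alone finishes that case. Your proposed route through Prop.~\ref{propconstant1}(i) does not directly apply, since part (i) computes the constant term along $U(P)$ (trivial character), while the $(2^2)$ coefficient is taken against a \emph{nontrivial} character of $U(P)$; the analogous genuine unfolding is only carried out in the paper for the $SO_7$ orbit $(3^21)$, where, moreover, the expansions along $x_1(l)$ and $x_2(l)$ are killed using the already-proved $(51^2)$ vanishing and non-genericity before Lemma~\ref{lem10} and Prop.~\ref{propconstant2}(i) reduce everything to $\Theta_{GL_3}^{(4)}\neq 0$ --- a logical dependence your sketch leaves implicit. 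Finally, your orbit bookkeeping is off: $(3,2^2)$ lies strictly \emph{below} $(3^21)$ (partial sums $3,5,7$ versus $3,6,7$) and need not be treated, and in both groups the orbits above the target --- $(4)$ for $Sp_4$; $(7)$ and $(51^2)$ for $SO_7$ --- are linearly ordered with no incomparable orbits, so the ``incomparable'' case you flag as the main obstacle does not in fact arise.
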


\begin{proof}
We start with the $Sp_4$ case. It follows from Lemma~\ref{lem1} that $\Theta_{Sp_4}^{(3)}$ is not generic. On the other hand,
it follows as in \cite{G1}, Theorem 3.1,  that  a representation on
$Sp_4^{(3)}({\mathbb A})$ which is not generic must have a nonzero Fourier coefficient associated with the unipotent orbit $(2^2)$.

Next consider the $SO_7$ case.
We need to prove that the representation $\Theta_{SO_7}^{(4)}$ has no nonzero
Fourier  coefficient corresponding to the unipotent orbits $(7)$ and $(51^2)$, and
has a nonzero Fourier coefficient corresponding to the orbit $(3^21)$.
Lemma~\ref{lem1} above states that $\Theta_{SO_7}^{(4)}$ is not generic;
this is equivalent to the vanishing of the Fourier coefficients for the orbit $(7)$.

Consider the Fourier coefficients corresponding to the orbit $(51^2)$. They are given as follows.
Let $U'$ denote the standard unipotent radical of the parabolic subgroup of $SO_7$
whose Levi part is $GL_1^2\times SO_3$. Then the Fourier coefficients are given by
\begin{equation*}
\theta_{SO_7}^{U',\psi_{U',\alpha}}(h)=\int\limits_{U'(F)\backslash U'({\mathbb{A}})}
\theta(uh)\,\psi_{U',\alpha}(u)\,du.
\end{equation*}
Here $\theta$ is a vector in the space $\Theta_{SO_7}^{(4)}$, and
$\psi_{U',\alpha}$ is defined as follows. Let
$\alpha\in F^\times$ and given $u=(u_{i,j})$ we define $\psi_{U',\alpha}(u)=
\psi(u_{1,2}+u_{2,3}-\frac{1}{2}\alpha u_{2,5})$. Changing $\alpha$ by a square in $F^\times$
gives the  same function for a different $h$, so we analyze $\alpha$ modulo $(F^\times)^2$.

Suppose first that $\alpha=1$. Then conjugating
by a suitable matrix in $SO_3(F)$, the above Fourier coefficient is  zero for all
choices of data if and only if the integral
\begin{equation}\label{four5112}
\int\limits_{U'(F)\backslash U'({\mathbb{A}})}
\theta(uh)\,\psi_{U'}(u)\,du
\end{equation}
is zero for all choices of data. Here $\psi_{U'}(u)=\psi(u_{1,2}
+u_{2,4})$.

Conjugating from left to right by $w[12]$, we then perform  root
exchanges as explained in subsection \ref{exchange}. Let $\{z(m_1,m_2)\}$ denote the unipotent subgroup of $SO_7$ generated by all 
matrices $z(m_1,m_2)=I_7+m_1e'_{1,3}+m_2e'_{1,4}-\frac{m_2^2}{2}e_{1,7}$ and let $\{y(r_1,r_2)\}$ denote the unipotent subgroup of $SO_7$ generated by all  matrices $y(r_1,r_2)=I_7+r_1e_{2,1}'+r_2e_{3,1}'$.
Then using root exchange we deduce that
the  integral \eqref{four5112} is equal to
$$\int\limits_{\mathbb{A}^2}\int\limits_{U_1(F)\backslash U_1({\mathbb{A}})}\theta
(u_1y(r_1,r_2)w[12])\,\psi_{U_1}(u_1)\,du_1\,dr_1\,dr_2.$$
Here $U_1$ is the subgroup of $U$, the
maximal unipotent subgroup of $SO_7$, such that if $u=(u_{i,j})\in U_1$, then $u_{1,2}=0$, and
the character $\psi_{U_1}$ of $U_1$ is given by $\psi_{U_1}(u)=\psi(u_{2,3}+u_{3,4})$.
We note that the above root exchange is to be carried out
in stages. In other words, we first perform the root exchange of
the group $\{z(0,m_2)\}$ with the group $\{y(0,r_2)\}$ and then of $\{z(m_1,0)\}$ with $\{y(r_1,0)\}$. 

Expand the above integral along the
subgroup $y(r)=I_7+re_{1,2}'$. There are two orbits to consider. The nontrivial orbits
contribute zero to the expansion. Indeed, when this is the case we obtain, as inner
integration the Whittaker coefficient of $\theta$, which vanishes by Lemma~\ref{lem1}.
Thus, the above integral is equal to
$$\int\limits_{\mathbb{A}^2}\int\limits_{U(F)\backslash U({\mathbb{A}})}\theta
(uy(r_1,r_2)w[12])\psi_{U}(u)\,du\,dr_1\,dr_2$$ where $\psi_U(u)=\psi(u_{2,3}+u_{3,4})$.
Applying Lemma \ref{lem10},
we conclude that the integral \eqref{four5112} is zero for all choices of data if and only if the integral
\begin{equation}\label{integral-section3}
\int\limits_{U(F)\backslash U({\mathbb{A}})}\theta(u)\,\psi_U(u)\,du
\end{equation}
is zero for all choices of data. 

Write $U=U''U(R)$ where $U(R)$ is the unipotent radical of the
maximal parabolic subgroup $R$ of $SO_7$ whose Levi part is $GL_1\times SO_5$,
and $U''$ is the maximal
unipotent radical of $SO_5$ embedded in $SO_7$ as in \eqref{so5-to-so7}.
From the definition of $\psi_U$ it is trivial on
$U(R)$. Writing \eqref{integral-section3}
as an iterated integral and computing first the constant term along $U(R)$,
we see using Prop.~\ref{propconstant2}, part (iii), 
that we obtain the Whittaker coefficient of the theta representation of $SO_5^{(4)}$.
By Lemma~\ref{lem1}, the
representation $\Theta_{SO_5}^{(4)}$ has no nonzero Whittaker
coefficients, and hence the integral \eqref{integral-section3} is zero for all choices of data.

Next consider the case when $\alpha$ is not a square in $F^\times$. In this case, we choose a local
unramified place $\nu$ such that $\alpha$ is a square in $F_\nu^\times$. It is enough to prove that the 
twisted Jacquet module
of  the local  representation  $(\Theta_{SO_7}^{(4)})_\nu$
corresponding to the unipotent subgroup $U'$ and character $\psi_{U'}$ is zero. 
We do so by employing the Lemma established in \cite{G-R-S1},
Section 2.2, which gives an isomorphism of twisted Jacquet modules in certain circumstances. 
The Lemma is stated there for the symplectic group, but the statement and proof for the orthogonal group are
identical. In \cite{G-R-S} Section 7.1, a similar result is also stated and
proved in the global context in complete generality.

To explain the argument, let $U'_1=w[12]U'w[12]^{-1}$, and for 
$u=(u_{i,j})\in U'_1$ let
$\psi_{U'_1}(u)=\psi(u_{2,3}+u_{3,4})$.
Then our goal is to show that the twisted Jacquet module $J_{U'_1,\psi_{U'_1}}((\Theta_{SO_7}^{(4)})_\nu)$ is zero.  
In the notation of the Lemma in \cite{G-R-S1}, let $C$ denote the subgroup of $U_1$ of matrices $u=(u_{i,j})\in U_1$ 
such that $u_{1,3}=u_{1,4}=0$. Also, let $X=\{z(0,m_2)\}$ and $Y=\{y(0,r_2)\}$. 
Then it is easy to see that the conditions of the Lemma are satisfied. Here $D_0=CX$. 
Notice that $D_0$ is a subgroup of $U_1$, and we may consider the character $\psi_{U_1}$ as a character of $D_0$. 
We repeat this process again with $C=D_0$,  $X=\{z(m_1,0)\}$ and  $Y=\{y(r_1,0)\}$. 
Notice that this time $U_1=CX$. Doing so, it follows from the Lemma  that 
$$J_{U'_1,\psi_{U'_1}}((\Theta_{SO_7}^{(4)})_\nu)\cong J_{U_1,\psi_{U_1}}((\Theta_{SO_7}^{(4)})_\nu).$$
Hence it is enough to prove that the latter Jacquet module is zero. To establish this, it is enough to prove that the twisted Jacquet 
module $J_{U,\psi_{U,\beta}}((\Theta_{SO_7}^{(4)})_\nu)$ is zero for all $\beta\in F_\nu$. Here, 
$\psi_{U,\beta}$ is the character $\psi_{U,\beta}(u)=\psi(\beta u_{1,2}+u_{2,3}+
u_{3,4})$ of $U$. If $\beta\ne 0$, then since we proved in Lemma~\ref{lem1}  that the representation $\Theta_{SO_7}^{(4)}$
has no local Whittaker functional,
the corresponding Jacquet module is zero. If $\beta=0$ we write 
$U=U''U(R)$ as defined above. We apply the argument immediately after Proposition \ref{propconstant2}  to deduce 
that $J_{U(R)}((\Theta_{SO_7}^{(4)})_\nu)$, as a representation of $SO_5^{(4)}(F_\nu)$,
is the theta representation on this group.  Since by Lemma~\ref{lem1} the
representation $\Theta_{SO_5}^{(4)}$ also has no local Whittaker functional,
we deduce that the Jacquet module $J_{U'',\psi_{U_1}}((\Theta_{SO_5}^{(4)})_\nu)$ is zero. 
Hence the desired vanishing holds when $\alpha$ is not a square as well.
(In fact, this argument gives a local proof of vanishing when $\alpha=1$ too. The reader may find it helpful to compare
the global and local arguments.)

To complete the proof of the Proposition, we need to show that $\Theta_{SO_7}^{(4)}$ has a nonzero
Fourier coefficient corresponding to the unipotent orbit $(3^21)$. Consider the
Fourier coefficient
\begin{equation}\label{four3310}
\int\limits_{U(Q)(F)\backslash U(Q)({\mathbb{A}})}
\theta(uh)\,\psi_{Q}(u)\,du
\end{equation}
where $U(Q)$ is the unipotent radical of the
standard maximal parabolic subgroup $Q$ whose Levi part is $GL_2\times SO_3$. The character
$\psi_Q$ is defined as $\psi_Q(u)=\psi(u_{1,3}+u_{2,5})$. It is not hard to show
that the stabilizer of $\psi_Q$ inside $GL_2\times SO_3$ is a one-dimensional torus, and
this Fourier coefficient corresponds to the orbit $(3^21)$.  
To show that these Fourier coefficients are not zero
for all choices of data,
conjugate integral \eqref{four3310} by the Weyl element $w[32]$. We obtain the integral
\begin{equation}\label{four3311}
\int\limits_{(F\backslash {\mathbb{A}})^2}\int\limits_{V(F)\backslash V({\mathbb{A}})}
\theta(vz(r_1,r_2)w[32])\,\psi_V(v)\,dv\,dr_1\,dr_2.
\end{equation}
Here $V$ is the subgroup of $U'$ such that if $v=(v_{i,j})\in V$, then $v_{1,5}
=v_{2,4}=v_{2,5}=0$. Thus its dimension is five. The character $\psi_V$ is defined as
$\psi_V(v)=\psi(v_{1,2}+v_{2,3})$. Finally, we define $z(r_1,r_2)=I_7+r_1e_{5,2}'+r_2e_{4,3}'
-\frac{1}{2}r_2^2e_{5,3}$.

Expand the integral \eqref{four3311} along the two-dimensional unipotent
subgroup given by
$y(l_1,l_2)=I_7+l_1e_{1,5}'+l_2e_{2,4}'-\frac{1}{2}l_2^2e_{2,6}.$
Performing root exchanges as explained in subsection \ref{exchange}
we replace $z(r_1,r_2)$ by $y(l_1,l_2)$. Thus, the integral
\eqref{four3311} is equal to
\begin{equation}\label{four3312}
\int\limits_{{\mathbb{A}}^2}\int\limits_{V_1(F)\backslash V_1({\mathbb{A}})}
\theta(v_1z(r_1,r_2)w[32])\,\psi_{V_1}(v_1)\,dv_1\,dr_1\,dr_2.
\end{equation}
Here $V_1$ is the subgroup of $U'$ such that if $v_1=(v_1(i,j))\in V_1$ then $v_1(2,5)=0$.

Next,
we expand the integral \eqref{four3312} along the subgroup $x_1(l)=I_7+le_{2,5}'$. The contribution
from the nontrivial orbits is zero. This follows from the fact that each such
Fourier coefficient corresponds to the unipotent orbit $(51^2)$, so was shown to
vanish above. Finally, we expand along the group $x_2(l)=I_7+le_{3,4}'-
\frac{1}{2}l^2e_{3,5}$. As above, the contribution from the nontrivial orbit is zero, since
$\Theta_{SO_7}^{(4)}$ is not generic. Thus, the above integral is equal to
\begin{equation}\label{four3313}
\int\limits_{{\mathbb{A}}^2}\int\limits_{U(F)\backslash U({\mathbb{A}})}
\theta(uz(r_1,r_2)w[32])\,\psi_{U,1}(u)\,du\,dr_1\,dr_2 .
\end{equation}
Here $U$ is the maximal unipotent subgroup of $SO_7$ and $\psi_{U,1}(u)=\psi(
u_{1,2}+u_{2,3})$.
Applying Lemma \ref{lem10}, we deduce that this integral is not zero for some choice of data
if and only if the integral
\begin{equation*}
\int\limits_{U(F)\backslash U({\mathbb{A}})}
\theta(u)\,\psi_{U,1}(u)\,du 
\end{equation*}
is not zero for some choice of data. Using Prop.~\ref{propconstant2} part (i), 
we see that this integral is not zero for some choice of data if
and only if the representation $\Theta_{GL_3}^{(4)}$ is not the zero representation.  Since this is so (see \cite{K-P}),
we deduce the non-vanishing of the integral \eqref{four3310} for some choice of data, as claimed.
\end{proof}

\section{ The Descent Integrals}

In this Section we define the descent integrals and study the constant terms and
the Whittaker coefficients of the resulting functions.  In carrying out these computations,
we work with matrices that are embedded in their covers by means of the trivial section,
but we continue to suppress this section from the notation.  Of course, when roots of unity
arise from the cocycle (for example, in \eqref{non-trivial-cocycle} below) we shall 
include them.

\subsection{The Descent From $Sp_4$}
Starting with the representation $\Theta_{Sp_4}^{(3)}$, we consider the function
\begin{equation*}
f_{SL_2}^{(6)}(g)=\int\limits_{(F\backslash {\mathbb
A})^3}\widetilde{\theta}_\phi^\psi
((x,y,z)g)\,\theta_{Sp_4}^{(3)}(i(x,y,z)i(g))\,dx\,dy\,dz.
\end{equation*}
where $i$ denotes the embeddings into $Sp_4$ given by 
\begin{equation*}  
i(x,y,z)=\begin{pmatrix} 1&x&y&z\\ &1&&y\\ &&1&-x\\ &&&1\end{pmatrix}\quad\text{and}\quad
i(g)=\begin{pmatrix} 1&&\\ &g&\\ &&1\end{pmatrix}.
\end{equation*}
The function $\theta_{Sp_4}^{(3)}$ is a vector in the space of
$\Theta_{Sp_4}^{(3)}$, and for convenience we shall denote it by
$\theta$.
Notice that $f_{SL_2}^{(6)}(g)$ is a function defined on the group $SL_2^{(6)}({\mathbb{A}})$
which is left-invariant under the discrete subgroup $SL_2(F)$.  
Let $\sigma_{SL_2}^{(6)}$ denote the representation of $SL_2^{(6)}({\mathbb{A}})$
generated by all the above functions.

\subsubsection{\bf The Constant Term} We start by computing the constant term of
$f_{SL_2}^{(6)}(g)$ with $g=t(a):=\text{diag}(a,a^{-1})$, $a\in {\mathbb A}^\times$.
Unfolding the theta function we obtain
\begin{equation}\label{sp44}
\int\limits_{F\backslash {\mathbb{A}}}f_{SL_2}^{(6)}\left (\begin{pmatrix} 1&r\\
&1\end{pmatrix}\begin{pmatrix} a&\\ &a^{-1}\end{pmatrix}\right )dr=
\end{equation}
$$\int\limits_{{\mathbb{A}}}\int\limits_{(F\backslash {\mathbb{A}})^3}\omega_\psi
((0,y,z)(x,0,0)n(r)t(a))\,\phi(0)\,\theta(i(0,y,z)i(x,0,0)i(n(r))t(1,a))\,dy\,dz\,dr\,dx.$$
Here $n(r)=\left(\begin{smallmatrix} 1&r\\
&1\end{smallmatrix}\right)$, the matrix $t(1,a)$ is defined in (\ref{t-sp4}),
and $\omega_\psi$ is the Weil representation with character $\psi$
realized in the Schr\"odinger model (see, for example,
\cite{G-R-S} Sect.~1.2). Using the properties of the
Weil representation, and conjugating by $w_1$ in the function
$\theta$, we deduce that integral \eqref{sp44} is equal to
\begin{multline*}
|a|^{-1/2}\gamma(a)\int\limits_{{\mathbb{A}}}\int\limits_{(F\backslash {\mathbb
A})^3}\phi(x)\theta\left (\begin{pmatrix} 1&&y&r\\ &1&z&y\\ &&1&\\
&&&1\end{pmatrix}\begin{pmatrix} a&&&\\ &1&&\\ &&1&\\
&&&a^{-1}\end{pmatrix}w_1\begin{pmatrix} 1&x&&\\ &1&&\\ &&&1&-x\\
&&&&1\end{pmatrix} \right )\times\\ \psi(z)\,dy\,dz\,dr\,dx.
\end{multline*}
The factor $|a|^{-1/2}\gamma(a)$
arises from the action of the Weil representation together with a
change of variables in $x$. Also, $\gamma(a)$ is the Weil factor (a root of unity)
associated to $\psi$.

Next we expand the above integral along the group $I_4+le_{1,2}'$.
Since $\Theta_{Sp_4}^{(3)}$ is not generic (see Lemma~\ref{lem1}),
the contribution from the nontrivial characters is zero. Thus
integral \eqref{sp44} is equal to
\begin{equation}\label{sp45}
|a|^{-1/2}\gamma(a)\int\limits_{{\mathbb{A}}}\phi(x)\theta^{U,\psi_U}\left (
\begin{pmatrix} a&&&\\ &1&&\\ &&1&\\
&&&a^{-1}\end{pmatrix}w_1\begin{pmatrix} 1&x&&\\ &1&&\\ &&&1&-x\\
&&&&1\end{pmatrix} \right )dx
\end{equation}
Here $U$ is the standard maximal unipotent subgroup of $Sp_4$ and
$\psi_U$ is defined by $\psi_U(u)=\psi(u_{2,3})$.
Note that for any nontrivial $\psi$,
\begin{equation}\label{whsl21}
\int\limits_{F\backslash {\mathbb{A}}}\theta_{SL_2}^{(3)}\begin{pmatrix} 1&x\\ &1\end{pmatrix}\,
\psi(x)\,dx
\end{equation}
is not zero for some choice of data.  This may be seen using the Hecke relations and the
Fourier expansion.  
Consequently the expression
\eqref{sp45} is nonzero for some choice of data. Indeed, plugging in $a=1$,  if it were  zero for all choices of
data, then it follows from the second part of Prop.~\ref{propconstant1}  that the integral \eqref{whsl21} is zero for all
choices of data.

\subsubsection{\bf The Whittaker Coefficient} In this subsection we
compute a Whittaker coefficient of the function $f_{SL_2}^{(6)}$.
More precisely, we compute the integral
\begin{equation}\label{sp4wh1}
\text{Wh}_2^{(6)}(g)=
\int\limits_{F\backslash {\mathbb{A}}}f_{SL_2}^{(6)}\left (\begin{pmatrix} 1&r\\
&1\end{pmatrix}g\right
)\psi(-\frac{1}{4}r)\,dr
\end{equation}
Here $g\in SL_2^{(6)}({\mathbb A})$.
Beginning as in the computation of the constant term, we see that the
integral \eqref{sp4wh1} is equal to
\begin{equation}\label{whittaker1}
\int\limits_{{\mathbb{A}}}\int\limits_{(F\backslash {\mathbb
A})^3}\omega_\psi(g)\phi(x)\,\theta\left (\begin{pmatrix} 1&&y&z\\ &1&r&y\\ &&1&\\
&&&1\end{pmatrix}\begin{pmatrix} 1&x&&\\ &1&&\\ &&&1&-x\\
&&&&1\end{pmatrix}i(g) \right ) \psi(z-\frac{1}{4}r)\,\,dy\,dz\,dr\,dx.
\end{equation}
Let $\gamma_0=\begin{pmatrix} 1&\\1&1\end{pmatrix}
\begin{pmatrix} 1&-\frac{1}{2}\\&1\end{pmatrix}=\begin{pmatrix} 1&-\frac{1}{2}\\ 1&\frac{1}{2}\end{pmatrix}$,
$\gamma_1=\begin{pmatrix} \gamma_0&\\
&\gamma_0^*\end{pmatrix}$,
where $\gamma_0^*$ is chosen so that the matrix $\gamma_1$ is symplectic. Since $\gamma_1\in Sp_4(F)$, we
can conjugate it from left to right. After a change of
variables, the character $\psi$ is changed, and we obtain
$$\int\limits_{{\mathbb{A}}}\int\limits_{(F\backslash {\mathbb
A})^3}\omega_\psi(g)\phi(x)\,\theta\left (\begin{pmatrix} 1&&y&z\\ &1&r&y\\ &&1&\\
&&&1\end{pmatrix}\gamma_1 \begin{pmatrix} 1&x&&\\ &1&&\\ &&&1&-x\\
&&&&1\end{pmatrix}i(g) \right ) \psi(y)\,dy\,dz\,dr\,dx.$$

Next we perform a root
exchange as explained in subsection \ref{exchange}. This allows us to replace the
one-dimensional unipotent subgroup $z_1(r)=I_4+re_{2,3}$ by
$m(l)=I_4+le_{1,2}'$.  Then we conjugate by $w_2$ from left to
right, and we obtain the integral
\begin{multline*}
\int\limits_{{\mathbb{A}}^2}\int\limits_{(F\backslash {\mathbb
A})^3}\omega_\psi(g)\phi(x)\,\theta\left (\begin{pmatrix} 1&y&l&z\\ &1&&l\\ &&1&-y\\
&&&1\end{pmatrix}w_2z_1(r)\gamma_1 \begin{pmatrix} 1&x&&\\ &1&&\\ &&&1&-x\\
&&&&1\end{pmatrix} i(g)\right )\\ \times \psi(y)\,dy\,dz\,dr\,dx\,dl.
\end{multline*}
Then we
expand the above integral along the unipotent group $I_4+me_{2,3}$.
The contribution from the nontrivial characters is zero since, by Lemma~\ref{lem1},
$\Theta_{Sp_4}^{(3)}$ is not generic. We thus obtain
$$\int\limits_{{\mathbb{A}}^2}\omega_\psi(g) \phi(x)\theta^{U,\psi_{U,1}}
(w_2z_1(r)\gamma_1m(x)i(g))\,dr\,dx.$$
Here $\psi_{U,1}$ is defined by $\psi_{U,1}(u)=\psi(u_{1,2})$.  

We next apply the Bruhat decomposition
$\gamma_0=\left (\begin{smallmatrix} 1&1\\ &1\end{smallmatrix}\right )\left (\begin{smallmatrix} &-1\\ 1&\end{smallmatrix}\right )\left (\begin{smallmatrix} 1&1/2\\ &1\end{smallmatrix}\right )$.
Making use of the corresponding decomposition of $\gamma_0^*$ in $Sp_4(F)$,  the matrix $\left(\begin{smallmatrix} &-1\\
1&\end{smallmatrix}\right)$ is replaced by the simple reflection $w_1$.  Thus we obtain
$$\int\limits_{{\mathbb{A}}^2}\omega_\psi(g) \phi(x)\theta^{U,\psi_{U,1}}
(w_2z_1(r)m(1)w_1m(x+\tfrac{1}{2})i(g))\,dr\,dx.$$

Finally, we conjugate $m(1)$ to the left, and $w_2$ and $w_1$ to the right,
and we deduce that integral \eqref{sp4wh1} is equal to the integral
\begin{equation}\label{sp4wh2}
\int\limits_{{\mathbb
A}^2}\omega_\psi(g)\phi(x)\,
\theta^{U,\psi_{U,1}}(z(r)w_2w_1m(x+\tfrac{1}{2})i(g))\,\psi(r)\,dr\,dx
\end{equation}
where $z(r)=I_4+re_{3,2}$.  In the above integral,
$\psi(r)$ arises from the conjugation of $m(1)$ across
$z_1(r)$ and a change of variables in $U$.

As a first consequence, this computation of the Whittaker coefficient
gives the following non-vanishing result.

\begin{lemma}\label{nonzerosp4}
The representation $\sigma_{SL_2}^{(6)}$ is not zero, and moreover
for some choice of data the Whittaker coefficient \eqref{sp4wh1} is not zero.
\end{lemma}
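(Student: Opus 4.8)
The plan is to leverage the explicit formula \eqref{sp4wh2} just derived for the Whittaker coefficient $\text{Wh}_2^{(6)}(g)$ and reduce the nonvanishing statement to a property of the $GL_2^{(3)}$ theta representation that we already control. First I would specialize $g=e$ (or more generally choose $g$ so that the Weil-representation factor $\omega_\psi(g)\phi$ is a convenient Schwartz function), so that \eqref{sp4wh2} becomes
\begin{equation*}
\int\limits_{{\mathbb A}^2}\phi(x)\,\theta^{U,\psi_{U,1}}(z(r)w_2w_1 m(x+\tfrac12))\,\psi(r)\,dr\,dx,
\end{equation*}
and observe that nonvanishing of $\sigma_{SL_2}^{(6)}$ and of \eqref{sp4wh1} both follow once we exhibit a single choice of the data $\phi$ and $\theta$ for which this integral is nonzero.

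\smallskip
Next I would analyze the inner Fourier coefficient $\theta^{U,\psi_{U,1}}$ appearing in \eqref{sp4wh2}, where $\psi_{U,1}(u)=\psi(u_{1,2})$. The key point is that this is a degenerate Whittaker coefficient of $\Theta_{Sp_4}^{(3)}$ supported along the root $e_{1,2}'$, not the full (nondegenerate) Whittaker coefficient that vanishes by Lemma~\ref{lem1}. I would relate this coefficient to the constant-term structure of $\Theta_{Sp_4}^{(3)}$ along the parabolic $Q$ via Proposition~\ref{propconstant1}(ii), thereby expressing the relevant values of $\theta^{U,\psi_{U,1}}$ in terms of the theta representation $\Theta_{SL_2}^{(3)}$ on the three-fold cover of $SL_2$. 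Concretely, the element $w_2 w_1 m(x+\tfrac12)$ followed by $z(r)$ moves the computation into a region where the constant term along $U(Q)$ is being evaluated, and Proposition~\ref{propconstant1}(ii) identifies that constant term with a vector $\theta_{SL_2}^{(3)}$. This is exactly parallel to the constant-term computation already carried out, where the analogous reduction produced the integral \eqref{whsl21}.

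\smallskip
The final step is to invoke the nonvanishing of the $SL_2^{(3)}$ Whittaker integral \eqref{whsl21}, which the paper has already asserted is nonzero for some choice of data (via the Hecke relations and the Fourier expansion). Since the $x$-integration against the free Schwartz function $\phi$ and the $r$-integration against $\psi(r)$ can be arranged to pick out precisely a nonzero Fourier mode of $\theta_{SL_2}^{(3)}$, I would conclude that \eqref{sp4wh2} is nonzero for a suitable $(\phi,\theta)$. I expect the main obstacle to be the bookkeeping in the second step: one must verify that conjugating the Weyl element $w_2 w_1$ and the torus elements past the unipotent $z(r)$ genuinely lands the evaluation point inside $\widetilde{A}(\mathbb{A})\,i_Q(1,h)$ so that Proposition~\ref{propconstant1}(ii) applies cleanly, and that the resulting character on the $r$-variable matches the Whittaker character of $SL_2^{(3)}$ rather than being washed out. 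Care is needed because the cocycle and the Weil factor $\gamma(a)$ (roots of unity) ride along through these conjugations; one must check they do not force an unexpected cancellation. Once the alignment of characters and the triviality of the metaplectic factors on the relevant torus is confirmed, the nonvanishing is immediate from \eqref{whsl21}.
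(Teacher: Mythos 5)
There is a genuine gap, and it sits exactly at the step you flagged as ``bookkeeping'': you have paired the Whittaker computation with the wrong parabolic. The coefficient appearing in \eqref{sp4wh2} is $\theta^{U,\psi_{U,1}}$ with $\psi_{U,1}(u)=\psi(u_{1,2})$, and the entry $u_{1,2}$ lies along the short simple root $e_{1,2}'$, which is contained in the unipotent radical $U(Q)$ of the Klingen parabolic. So $\psi_{U,1}$ is \emph{nontrivial} on $U(Q)$, the coefficient is not a constant term along $U(Q)$, and Proposition~\ref{propconstant1}(ii) together with the $SL_2^{(3)}$ Whittaker integral \eqref{whsl21} cannot be invoked; no amount of conjugating $w_2w_1$ and torus elements past $z(r)$ will repair this, since the characters $\psi(u_{1,2})$ (short root) and $\psi(u_{2,3})$ (long root) are not Weyl-conjugate. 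The pairing you describe --- $Q$, Proposition~\ref{propconstant1}(ii), and \eqref{whsl21} --- is precisely what the paper uses in the \emph{constant term} computation, where the relevant character is $\psi_U(u)=\psi(u_{2,3})$, trivial on $U(Q)$; you have transposed that argument to the Whittaker setting where it does not apply. The correct decomposition is $U=U_{GL_2}\ltimes U(P)$ with $P$ the Siegel parabolic: $\psi_{U,1}$ is trivial on $U(P)$ and restricts to the Whittaker character of the $GL_2$ Levi, so $\theta^{U,\psi_{U,1}}$ is the $U(P)$-constant term (Proposition~\ref{propconstant1}(i), yielding $|\det h|\,\overline{\theta}^{(3)}_{GL_2}(h)$) followed by a $GL_2^{(3)}$ Whittaker integral, and nonvanishing comes from the genericity of $\Theta_{GL_2}^{(3)}$ --- which is what the paper's proof uses (arguing by contradiction).

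A second, smaller gap: after setting $g=e$ and using the arbitrariness of $\phi$ to dispose of the $x$-integration, one is still left with an integration of $\theta^{U,\psi_{U,1}}(z(r)\cdots)$ against $\psi(r)$ over all of $\mathbb{A}$, and ``arranging the $r$-integration to pick out a nonzero Fourier mode'' is not an argument. The paper removes this integration via Lemma~\ref{lem10}, applied with the one-parameter subgroup $\{I_4+m(e_{1,3}+e_{2,4})\mid m\in\mathbb{A}\}$, whose commutation with $z(r)=I_4+re_{3,2}$ deposits an $e_{1,2}$-entry and hence produces the phase $\psi(rm)$ required by hypothesis \eqref{vanish10}; this is what converts vanishing of the $r$-integral for all data into vanishing of $\theta^{U,\psi_{U,1}}$ itself. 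Your plan to deduce the first assertion of the lemma (nonvanishing of $\sigma_{SL_2}^{(6)}$) from the Whittaker coefficient rather than, as the paper does, from the previously computed nonzero constant term, is logically fine --- but only once the two gaps above are repaired.
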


\begin{proof}
 The representation $\sigma_{SL_2}^{(6)}$ is not zero since we have shown that some function in
this space has a nonzero constant term.
 To prove the nonvanishing of some Whittaker coefficient, it is enough to prove that the integral \eqref{sp4wh2} is not zero for some choice of data. Let $g=e$, and assume
instead that this integral is zero for all choices of data. Since $\phi$ is an arbitrary Schwartz function, this implies that the integral
$$\int\limits_{\mathbb A}\theta^{U,\psi_{U,1}}(z(r))\,\psi(r)\,dr$$
is zero for all choices of data.  Using the subgroup $\{I_4+m(e_{1,3}+e_{2,4})\mid m\in\mathbb{A}\}$ 
and applying Lemma \ref{lem10}, we deduce that  $\theta^{U,\psi_{U,1}}$ is zero for all choices of data. Since the representation $\Theta_{GL_2}^{(3)}$ is generic, we obtain a contradiction to Prop.~\ref{propconstant1}, part (i).
\end{proof}

\subsection{ The Descent From $SO_7$} We start by describing the descent
integral. Recall that $U(R)$ denotes the standard unipotent radical of the
maximal parabolic subgroup of $SO_7$ whose Levi part is $GL_1\times
SO_5$. We define a character of this group by
$\psi_{U(R)}(u)=\psi(u_{1,4})$. It is not hard to check that the
stabilizer of $\psi_{U(R)}$ inside $SO_5$ contains the split orthogonal
group $SO_4$. For $g$ in this $SO_4$, we let $i(g)=i_R(1,g)\in SO_7$.
(We use $i(g)$ for the embedding in each descent integral to emphasize the
parallel structure; there will be no ambiguity as the two descents are treated separately.)
We define the descent integral by
\begin{equation}\label{so71}
f_{SO_4}^{(4)}(g)=\int\limits_{U(R)(F)\backslash U(R)({\mathbb
A})}\theta_{SO_7}^{(4)}(u\, i(g))\,\psi_{U(R)}(u)\,du.
\end{equation}
Here $\theta_{SO_7}^{(4)}$ is a vector in the space of
$\Theta_{SO_7}^{(4)}$, and we write $\theta$ if there is no
confusion. As in the previous case, the functions $f_{SO_4}^{(4)}(g)$ are invariant
under $SO_4(F)$.  We denote by
$\sigma_{SO_4}^{(4)}$ the representation of
$SO_4^{(4)}({\mathbb{A}})$ generated by the space of functions given by the
integral \eqref{so71}.

\subsubsection{\bf The Constant Term} We start with the constant
term of the descent. In other words, we first compute
\begin{equation}\label{so72}
\int\limits_{(F\backslash {\mathbb{A}})^2}f_{SO_4}^{(4)}\left
(\begin{pmatrix} 1&r_1&r_2&-r_1r_2\\ &1&&-r_2\\ &&1&-r_1\\
&&&1\end{pmatrix}
\begin{pmatrix} a&&&\\ &b&&\\ &&b^{-1}&\\ &&&a^{-1}\end{pmatrix}
\right ) dr_1\,dr_2,
\end{equation}
where $a,b\in \mathbb{A}^\times$.
Here the embeddings of the matrices in the argument of $f_{SO_4}^{(4)}$ into $SO_7$ via the map $i$ are given by
\begin{equation}\label{so73}
(I_7+r_1e_{2,3}')(I_7+r_2e_{2,5}')\quad\text{and}\quad\text{diag}(1,a,b,1,b^{-1},a^{-1},1).
\end{equation}
To compute the constant term we first start by exchanging the
unipotent subgroup $y_1(x_1)=I_7+x_1e_{1,2}'$ with
$I_7+le_{2,3}'-\frac{1}{2}l^2e_{2,6}$. Then conjugating from the
left by $w_1$, the integral \eqref{so72} is equal to
$$\int\limits_{\mathbb{A}}\int\limits_{U(Q)(F)\backslash U(Q)({\mathbb{A}})}
\theta(v w_1y_1(x_1)t(1,a,b))\,\psi_{U(Q)}(v)\,dv\,dx_1.$$ Here $t(1,a,b)$
is the torus element in \eqref{so73} (cf.\ (\ref{t-so7})),and we recall that $U(Q)$ is the standard
unipotent radical of the maximal parabolic subgroup of $SO_7$ whose
Levi part is $GL_2\times SO_3$. The character $\psi_{U(Q)}$ is
defined by $\psi_{U(Q)}(v)=\psi(v_{2,4})$.

Expand this integral
along the unipotent group $I_7+re_{1,2}'$. The nontrivial characters
contribute zero to the expansion, since each of them is a Fourier
coefficient associated with the unipotent orbit $(51^2)$. By Prop.~\ref{prop3}
this is zero. Thus we are left with the contribution
from the constant term. Next we exchange the unipotent subgroup
$y_2(x_2)=I_7+x_2e_{2,3}'$ with
$I_7+le_{3,4}'-\frac{1}{2}l^2e_{3,5}$. Conjugating by $w_2$ from
left to right the above integral is equal to
$$\int\limits_{{\mathbb{A}}^2}\int\limits_{V_2(F)\backslash V_2({\mathbb{A}})}
\theta(v_2w_2y_2(x_2)w_1y_1(x_1)t(1,a,b))\,\psi_{V_2}(v_2)\,dv_2\,dx_1\,dx_2.$$
Here $V_2$ is the subgroup of the maximal unipotent subgroup $U$ of
$SO_7$ such that $v=(v_{i,j})\in V_2$ if and only if $v_{2,3}=0$.
Expanding the above integral along the group $I_7+le_{2,3}'$, it again
follows from Prop.~\ref{prop3} that the nontrivial characters
contribute zero to the expansion. Thus we deduce that integral
\eqref{so72} is equal to
\begin{equation}\label{so74}
\int\limits_{{\mathbb
A}^2}\theta^{U,\psi_{U,2}}(w_2y_2(x_2)w_1y_1(x_1)\,t(1,a,b))\,dx_1\,dx_2.
\end{equation}
Here
$\psi_{U,2}$ is defined by $\psi_{U,2}(u)=\psi(u_{3,4})$.

\subsubsection{\bf The Whittaker Coefficient}
In this subsection we compute a certain Whittaker coefficient of the
function $f_{SO_4}^{(4)}(g)$. More precisely, we compute the
integral
\begin{equation}\label{whso720}
\int\limits_{(F\backslash {\mathbb{A}})^2}f_{SO_4}^{(4)}\left
(\begin{pmatrix} 1&r_1&r_2&-r_1r_2\\ &1&&-r_2\\ &&1&-r_1\\
&&&1\end{pmatrix}
\begin{pmatrix} a&&&\\ &1&&\\ &&1&\\ &&&a^{-1}\end{pmatrix}
\right )\,\psi(r_1-\frac{1}{2}r_2) \, dr_1\, dr_2.
\end{equation}
The embeddings of these matrices in $SO_7$ are given in \eqref{so73},
and we start with the same exchange of unipotent subgroups as in the
computation of the constant term. We obtain the integral
$$\int\limits_{\mathbb{A}}\int\limits_{U(Q)(F)\backslash U(Q)({\mathbb{A}})}
\theta(uy_1(x_1)t(1,a,1))\,\psi'_{U(Q)}(u)\,du\,dx_1$$ where $\psi'_{U(Q)}$ is defined
as $\psi'_{U(Q)}(u)=\psi(u_{1,4}+u_{2,3}-\frac{1}{2}u_{2,5})$.
Let
$$\gamma_1=\begin{pmatrix} 1&\frac{1}{2}\\
1&-\frac{1}{2}\end{pmatrix},\ \ \ \
\gamma_2=k(1)\begin{pmatrix}&&-1\\ &-1&\\ -1&&\end{pmatrix}k(1)\ \ \text{with}
\ \  k(1)=\begin{pmatrix} 1&1&-\frac{1}{2}\\ &1&-1\\
&&1\end{pmatrix}$$ and denote
$\gamma=\text{diag}(\gamma_1,\gamma_2,\gamma_1^*)\in SO_7(F)$.
Then $\gamma$ normalizes the subgroup $U(Q)$.
Conjugating this element from left to right
changes the character $\psi'_{U(Q)}$ to a new character whose
value on $u\in U(Q)$ is $\psi(u_{1,3}+u_{2,5})$. Thus the integral
over $U(Q)$ with this character is exactly the integral
\eqref{four3310}. Therefore we may perform the same Fourier
expansion that leads from integral \eqref{four3310} to integral
\eqref{four3313}.  We conclude that the above integral is equal to
\begin{equation}\label{whso72}
\int\limits_{{\mathbb{A}}^3} \theta^{U,\psi_{U,1}}(z(r_1,r_2)w[32]\gamma
y_1(x_1)t(1,a,1))\,dr_1\,dr_2\,dx_1.\notag
\end{equation}

The $\gamma_2$ part of $\gamma$ commutes with $y_1(x_1)t(a,1)$,
hence we can conjugate it to the right, and ignore it by means of a
change of the vector $\theta$. Performing the Bruhat decomposition
of $\gamma_1$ and changing variables in $x_1$, the above integral becomes
\begin{equation}\label{whso73}\notag
\int\limits_{{\mathbb{A}}^3} \theta^{U,\psi_{U,1}}(z(r_1,r_2)w[32]y_1(1)w_1
y_1(x_1)t(1,a,1))\,dr_1\,dr_2\,dx_1.
\end{equation}
Conjugating the matrix $y_1(1)$ to the left and the matrix $w[321]$ to
the right and changing vectors, we obtain the integral
\begin{equation}\label{whso74}
|a|^{-1}\int\limits_{{\mathbb{A}}^3} \theta^{U,\psi_{U,1}}(z(r_1,r_2)\,t(a,1,1)\,
y_2(x_1))\,\psi(r_1)\,dr_1\,dr_2\,dx_1.
\end{equation}
Here $y_2(x_1)=I_7+x_1e_{5,1}'$,  the factor of $|a|^{-1}$ is
obtained from the change of variables in $x_1$, and $\psi(r_1)$
is obtained from the conjugation of $y_1(1)$ to the left and a change of
variables.

Similarly to Lemma~\ref{nonzerosp4} we have

\begin{lemma}\label{nonzeroso7}
The representation $\sigma_{SO_4}^{(4)}$ is not zero,
and moreover for some choice of data the Whittaker coefficient \eqref{whso720}
is not zero.
\end{lemma}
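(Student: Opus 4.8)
The plan is to prove Lemma~\ref{nonzeroso7} by close analogy with the proof of Lemma~\ref{nonzerosp4}, the non-vanishing lemma for the $Sp_4$ descent. There are two assertions to establish: that the representation $\sigma_{SO_4}^{(4)}$ is nonzero, and that some Whittaker coefficient \eqref{whso720} is nonzero. As in the $Sp_4$ case, the first assertion will follow from the second (or from the constant term computation), so the real content is the non-vanishing of \eqref{whso720} for some choice of data.

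First I would recall that the Whittaker coefficient \eqref{whso720} was shown above to equal the integral \eqref{whso74},
\begin{equation*}
|a|^{-1}\int\limits_{{\mathbb{A}}^3} \theta^{U,\psi_{U,1}}(z(r_1,r_2)\,t(a,1,1)\,
y_2(x_1))\,\psi(r_1)\,dr_1\,dr_2\,dx_1.
\end{equation*}
The strategy is then to specialize, taking $a=1$ (so that $t(a,1,1)=e$), and to argue by contradiction: suppose the integral vanishes for all choices of data. Since the integration in $x_1$ runs over the one-parameter group $y_2(x_1)=I_7+x_1 e'_{5,1}$, and all the data (in particular the implicit Schwartz-type freedom coming from the vector $\theta$) are arbitrary, I would isolate the inner integral and conclude that
\begin{equation*}
\int\limits_{{\mathbb{A}}^2} \theta^{U,\psi_{U,1}}(z(r_1,r_2))\,\psi(r_1)\,dr_1\,dr_2
\end{equation*}
is zero for all choices of data.

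Next I would remove the remaining integrations over $z(r_1,r_2)$ by repeated application of Lemma~\ref{lem10}, exactly as the $Sp_4$ proof uses the one-parameter group $\{I_4+m(e_{1,3}+e_{2,4})\}$ to strip off its integral. Concretely, for each of the two variables $r_1,r_2$ one exhibits a one-dimensional unipotent subgroup $\{y(m)\}$ of $SO_7$ whose commutator with $z(r_i,\cdot)$ produces the correct additive character against which we are integrating, so that hypothesis \eqref{vanish10} holds; Lemma~\ref{lem10} then forces the Fourier coefficient $\theta^{U,\psi_{U,1}}$ itself to vanish identically for all data. This reduces the claim to the non-vanishing of $\theta^{U,\psi_{U,1}}$, the Whittaker-type coefficient on the maximal unipotent $U$ of $SO_7$ against the character $\psi_{U,1}$.

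Finally I would reach the contradiction using Prop.~\ref{propconstant2}, part (i), which expresses the constant term of $\theta_{SO_7}^{(4)}$ along $U(P)$ in terms of a vector in $\Theta_{GL_3}^{(4)}$: computing $\theta^{U,\psi_{U,1}}$ via this constant term shows that its vanishing for all data would force the Whittaker coefficient of $\Theta_{GL_3}^{(4)}$ to vanish, hence $\Theta_{GL_3}^{(4)}$ to be the zero representation. Since $\Theta_{GL_3}^{(4)}$ is known to be nonzero (see \cite{K-P}), this is the desired contradiction. \emph{The main obstacle} I anticipate is the careful verification of the commutation relations \eqref{vanish10} needed to apply Lemma~\ref{lem10} to both variables $r_1$ and $r_2$ in the correct order, together with checking that the character $\psi_{U,1}$ restricts as required so that the final appeal to Prop.~\ref{propconstant2}(i) genuinely produces the $GL_3$ Whittaker coefficient rather than a degenerate one; the rest of the argument is a routine transcription of the $Sp_4$ proof.
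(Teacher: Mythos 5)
Your plan is indeed the intended argument: the paper gives no separate proof of this lemma, saying only that it follows ``similarly to Lemma~\ref{nonzerosp4},'' and your reduction --- set $a=1$ in \eqref{whso74}, strip the unipotent integrations via Lemma~\ref{lem10}, then contradict Prop.~\ref{propconstant2}(i) together with the non-vanishing of $\Theta_{GL_3}^{(4)}$ (the same endgame as the final step in the proof of Prop.~\ref{prop3}) --- is exactly that analogy. However, your treatment of the $x_1$-integration is the one step that fails as written. In the $Sp_4$ case the variable $x$ is genuinely weighted by a Schwartz function $\phi$ supplied by the Weil theta $\widetilde{\theta}_\phi^\psi$, so its integral can be discarded by choosing $\phi$. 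The $SO_7$ descent \eqref{so71} has no Fourier--Jacobi ingredient, hence no such $\phi$: in \eqref{whso74} the integral over $x_1$ carries no weight, and there is no ``implicit Schwartz-type freedom coming from $\theta$'' that isolates the inner integral. Indeed, right-convolving $\theta$ along the group $\{y_2(x)\}$ (say by Dixmier--Malliavin) only shifts $x_1$ and integrates out, replacing the $x_1$-integral by a scalar multiple of itself, so arbitrariness of the vector does not decouple the two inner variables from $x_1$. The correct device is a further application of Lemma~\ref{lem10}, now to the variable $x_1$ itself, with $y(m)=y_3(m)=I_7+me_{1,6}'$: the commutator of $y_3(m)$ with $y_2(x_1)=I_7+x_1e_{5,1}'$ lies in the $e_{2,3}'$-direction, so moving $y_3(m)$ leftward across the integrand and using the $(U,\psi_{U,1})$-equivariance produces precisely the factor $\psi(mx_1)$ required by hypothesis \eqref{vanish10}, while the commutators of $y_3(m)$ with $z(r_1,r_2)$ land in $U$ where $\psi_{U,1}$ is trivial. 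This is exactly the manipulation the paper performs locally in Section~\ref{section6}, where it derives $Z(x_{\nu'})=\psi(ma_{\nu'}x_{\nu'})Z(x_{\nu'})$; you need its global form here before your ``repeated application of Lemma~\ref{lem10}'' to $r_1$ and $r_2$ can begin.

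Two smaller corrections. First, the inference ``the Whittaker coefficient of $\Theta_{GL_3}^{(4)}$ vanishes, hence $\Theta_{GL_3}^{(4)}$ is the zero representation'' is not valid: a nonzero representation can fail to be generic. What you need, paralleling the $Sp_4$ proof's appeal to the genericity of $\Theta_{GL_2}^{(3)}$, is that $\Theta_{GL_3}^{(4)}$ is generic; this is on record in Section 3, where its Whittaker model is shown to be nonzero (indeed non-unique) by the argument of \cite{K-P}, Theorem I.3.5 with $c=-1/2$. Second, note that unlike the $Sp_4$ case the text never records a non-vanishing statement for the $SO_7$ constant term \eqref{so74}, so the assertion $\sigma_{SO_4}^{(4)}\neq0$ should be derived from the non-vanishing of the Whittaker coefficient itself, as your parenthetical alternative suggests. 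With the $x_1$-step repaired as above, your proof is the one the authors intend.
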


\subsection{Additional Expressions for the Whittaker Coefficients of the Descent}
To conclude this section, 
we work further with the expressions given above.
Let $M$ denote any of the algebraic groups considered here and 
$M^{(n)}({\mathbb{A}})$ denote a covering group for some number $n$. 
Let $\pi$ denote an
irreducible representation of $M^{(n)}({\mathbb{A}})$, and write
$\pi=\pi_{\text{inf}}\otimes\pi_{\text{fin}}$. Here
$\pi_{\text{inf}}$ is the product of all local representation
$\pi_\nu$ such that $\nu$ is an infinite place, and
$\pi_{\text{fin}}$ is the product over all finite places in $F$.

Assume that $\pi$ is generic, i.e.\ that the integral
$$W_\pi(m)=\int\limits_{U_M(F)\backslash U_M({\mathbb
A})}\varphi_\pi(um)\,\psi_{U_M}(u)\,du$$ is not zero for some choice of
data. Here $U_M$ is the maximal unipotent radical of $M$,
$\psi_{U_M}$ is a generic character of $U_M(F)\backslash U_M({\mathbb
A})$, and the function $\varphi_\pi$ is a vector in the space of
$\pi$. Choose a function $\varphi_\pi$ which corresponds to a
factorizable vector in $\pi$. Then
\begin{lemma}\label{lem3}
With the above notations we have
$W_\pi(m)=W_{\pi_{\text{inf}}}(m_{\text{inf}})W_{\pi_{\text{fin}}}(m_{\text{fin}})$.
Here $W_{\pi_{\text{inf}}}$ is a Whittaker functional defined over
all infinite places. Also, $W_{\pi_{\text{fin}}}$ is a Whittaker
functional on the representation $\otimes_{\nu\in\Phi}\pi_\nu$,
where $\Phi$ is the set of all finite places in $F$.

\end{lemma}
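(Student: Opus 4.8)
The plan is to realize $W_\pi(m)$ as the value of a single global Whittaker functional and then to split that functional into its archimedean and finite parts, exploiting the fact that the cover is trivial at every archimedean place. First I would set $\mathcal{W}(\varphi)=\int_{U_M(F)\backslash U_M(\mathbb{A})}\varphi(u)\,\psi_{U_M}(u)\,du$, so that $W_\pi(m)=\mathcal{W}(\pi(m)\varphi_\pi)$ and $\mathcal{W}$ is a nonzero $\psi_{U_M}$-Whittaker functional on $\pi$ (nonzero precisely because $\pi$ is assumed generic). Since $M^{(n)}(\mathbb{A})$ is a cover by $\mu_n$ with $n\ge 3$ and $F\supseteq\mu_n$, the field $F$ has no real embedding (a real embedding would force $\mu_n\hookrightarrow\{\pm1\}$); hence every archimedean place of $F$ is complex and the defining cocycle is trivial there. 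Consequently $\pi_{\mathrm{inf}}=\widehat{\bigotimes}_{\nu\mid\infty}\pi_\nu$ is a genuine irreducible admissible representation of the product of the complex reductive groups $M(F_\nu)$, and both $U_M(\mathbb{A})$ and $\psi_{U_M}$ factor as a product of an archimedean and a finite part.

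The second step is to invoke uniqueness of Whittaker models at the archimedean places. Each $\pi_\nu$ with $\nu\mid\infty$ is an irreducible admissible representation of a complex reductive group, for which the space of $\psi_\nu$-Whittaker functionals is at most one-dimensional; as there are finitely many such places, $\dim\mathrm{Wh}(\pi_{\mathrm{inf}})\le1$. Fix a generator $W_{\pi_{\mathrm{inf}}}$. Now for each fixed $\varphi_{\mathrm{fin}}$ in the space of $\pi_{\mathrm{fin}}$, the assignment $\varphi_{\mathrm{inf}}\mapsto\mathcal{W}(\varphi_{\mathrm{inf}}\otimes\varphi_{\mathrm{fin}})$ is a continuous functional on $\pi_{\mathrm{inf}}$ transforming under the archimedean part of $U_M(\mathbb{A})$ by $\psi_{U_M}$, hence lies in $\mathrm{Wh}(\pi_{\mathrm{inf}})$. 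By uniqueness it is a scalar multiple of $W_{\pi_{\mathrm{inf}}}$, and writing that scalar as $W_{\pi_{\mathrm{fin}}}(\varphi_{\mathrm{fin}})$ gives
\begin{equation*}
\mathcal{W}(\varphi_{\mathrm{inf}}\otimes\varphi_{\mathrm{fin}})
=W_{\pi_{\mathrm{inf}}}(\varphi_{\mathrm{inf}})\,W_{\pi_{\mathrm{fin}}}(\varphi_{\mathrm{fin}}).
\end{equation*}
One then checks that $W_{\pi_{\mathrm{fin}}}$ depends linearly on $\varphi_{\mathrm{fin}}$ and is $\psi_{U_M}$-equivariant for the finite part of $U_M(\mathbb{A})$, so it is a genuine Whittaker functional on $\pi_{\mathrm{fin}}=\otimes_{\nu\in\Phi}\pi_\nu$; since $\mathcal{W}\neq0$, both factors are nonzero (in particular the archimedean part of $\pi$ is itself generic).

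Finally I would evaluate on a factorizable vector. Taking $\varphi_\pi=\varphi_{\mathrm{inf}}\otimes\varphi_{\mathrm{fin}}$ and writing $m=m_{\mathrm{inf}}m_{\mathrm{fin}}$ according to the splitting of $M^{(n)}(\mathbb{A})$, we have $\pi(m)\varphi_\pi=\pi_{\mathrm{inf}}(m_{\mathrm{inf}})\varphi_{\mathrm{inf}}\otimes\pi_{\mathrm{fin}}(m_{\mathrm{fin}})\varphi_{\mathrm{fin}}$, and substituting into the displayed identity yields $W_\pi(m)=W_{\pi_{\mathrm{inf}}}(m_{\mathrm{inf}})\,W_{\pi_{\mathrm{fin}}}(m_{\mathrm{fin}})$, as desired. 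The point requiring genuine care — and the main obstacle — is the middle step: one must verify that $\varphi_{\mathrm{inf}}\mapsto\mathcal{W}(\varphi_{\mathrm{inf}}\otimes\varphi_{\mathrm{fin}})$ really is a \emph{continuous} Whittaker functional on the archimedean factor, so that the archimedean multiplicity-one theorem applies, and that the resulting scalar $W_{\pi_{\mathrm{fin}}}$ inherits smoothness and equivariance in $\varphi_{\mathrm{fin}}$; this is where the topological structure of the completed tensor product must be handled. Crucially, no uniqueness is used, or needed, at the finite places — which is essential here, since the metaplectic Whittaker model there is \emph{not} unique.
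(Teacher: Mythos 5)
Your proposal is correct and is essentially the paper's own argument: the paper's proof is a two-sentence sketch observing that all archimedean places are complex (so the cover is trivial there), that the space of archimedean Whittaker functionals is one-dimensional, and that this uniqueness lets one separate the archimedean factor from the finite places, citing the Basic Lemma of Piatetski-Shapiro--Rallis for the separation mechanism you spell out. Your expanded version — fixing a generator $W_{\pi_{\mathrm{inf}}}$, reading off $W_{\pi_{\mathrm{fin}}}$ as the proportionality scalar, and noting that no uniqueness is needed at the finite places — is exactly the content of that cited lemma, with the continuity point correctly flagged as the only step requiring care.
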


\begin{proof}
Since all infinite places are complex, the cover is trivial in
those places. Therefore, the space of Whittaker functionals defined on these
local representations is one-dimensional, and hence can be separated
from the other places. (For a  similar argument see the Basic Lemma
in \cite{PS-R}, pg.~117.)
\end{proof} 

In view of Lemma~\ref{lem3}, we may study the finite part of the Whittaker coefficients of the descent integrals.
Choose factorizable
vectors in the corresponding representations, and let $\phi_\Phi=\prod_{\nu\in\Phi}\phi_\nu$ where $\phi_\nu$ is a
Schwartz function of $F_\nu$ at the place $\nu$ such that for almost
all $\nu$ it is the unramified function.
Let ${\mathbb A}_\Phi=\prod_{\nu\in\Phi}F_\nu$,
 $|a|_\Phi=\prod_{\nu\in\Phi}|a_\nu|_\nu$,  and
$\phi_\Phi=\prod_{\nu\in\Phi}\phi_\nu$.  Then we have

\begin{lemma}\label{newlemma1} Let $g=\text{diag}(a,a^{-1})$.  Then the Whittaker coefficient \eqref{sp4wh1} has finite 
part given by
\begin{equation}\label{factor1}
|a|_\Phi^{-1/2}\gamma(a)\int\limits_{{\mathbb
A}_\Phi^2}\phi_\Phi(x)\,L_\Phi(z(r)t(a,1)w_2w_1m(x+\frac{a}{2}))\,\psi(r)\,dr\,dx,
\end{equation}
where the function $L_\Phi$ is described below and is computed on a certain maximal abelian subgroup of
$\widetilde{T}({\mathbb A}_\Phi)$ by  \eqref{factor2} below.
\end{lemma}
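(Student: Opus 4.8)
The plan is to begin from the expression \eqref{sp4wh2} for the Whittaker coefficient, which has already been derived for general $g\in SL_2^{(6)}(\mathbb A)$, to specialize it to $g=t(a):=\text{diag}(a,a^{-1})$, to move the torus element into a standard position on the left, and then to isolate the finite part by the reasoning of Lemma~\ref{lem3}. Under the embedding $i$ one has $i(t(a))=t(1,a)=\text{diag}(1,a,a^{-1},1)$, so \eqref{sp4wh2} becomes an integral over $(r,x)$ whose argument is $z(r)\,w_2w_1\,m(x+\tfrac12)\,t(1,a)$, weighted by $[\omega_\psi(t(a))\phi](x)\,\psi(r)$.

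First I would carry out the torus conjugation, exactly paralleling the constant-term computation that produced \eqref{sp45}. Writing $m(l)=I_4+l\,e'_{1,2}=I_4+l(e_{1,2}-e_{3,4})$, a direct check gives $m(l)\,t(1,a)=t(1,a)\,m(al)$; and since $w_1$ interchanges the two torus coordinates while $w_2$ fixes the first, one has $w_2w_1\,t(1,a)=t(a,1)\,w_2w_1$. Hence the argument becomes $z(r)\,t(a,1)\,w_2w_1\,m(a(x+\tfrac12))$, the metaplectic cocycle contributing nothing beyond the Weil factor already present. I would then insert the explicit Weil action $[\omega_\psi(t(a))\phi](x)=\gamma(a)|a|^{1/2}\phi(ax)$ and change variables $x\mapsto a^{-1}x$. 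This replaces $\phi(ax)$ by $\phi(x)$ and $m(ax+\tfrac a2)$ by $m(x+\tfrac a2)$, and produces the scalar $\gamma(a)|a|^{1/2}\cdot|a|^{-1}=\gamma(a)|a|^{-1/2}$, just as in the passage from \eqref{sp44} to \eqref{sp45}. At this point \eqref{sp4wh1} is written as a single adelic integral with integrand $\phi(x)\,\theta^{U,\psi_{U,1}}(z(r)\,t(a,1)\,w_2w_1\,m(x+\tfrac a2))\,\psi(r)$ times $\gamma(a)|a|^{-1/2}$.

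It then remains to separate the archimedean places and name the finite part. The coefficient $\theta^{U,\psi_{U,1}}$ is taken against the degenerate character $\psi_{U,1}(u)=\psi(u_{1,2})$ supported on the short simple root; computing first the constant term along $U(P)$ and invoking Prop.~\ref{propconstant1}(i)---valid provided $t(a,1)\in\widetilde A(\mathbb A)$---identifies it, up to the explicit factor recorded there, with the generic Whittaker coefficient of $\Theta_{GL_2}^{(3)}$. Since that representation is generic, the reasoning of Lemma~\ref{lem3} applies: every archimedean place of $F$ is complex, so the cover is trivial and the local Whittaker functional is one-dimensional there, whence the product over the infinite places factors off and may be treated on its own. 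I would define $L_\Phi$ to be the resulting finite part of $\theta^{U,\psi_{U,1}}$, so that after absorbing the archimedean integrations in $x$ and $r$ (together with $|a|_\infty^{-1/2}$ and $\phi_\infty$) into the separated infinite contribution, the finite part of \eqref{sp4wh1} is exactly \eqref{factor1}, the surviving integrals running over $\mathbb A_\Phi$ against $\phi_\Phi$. The explicit value of $L_\Phi$ on the torus, formula \eqref{factor2}, then follows from Prop.~\ref{propconstant1}(i); since that proposition requires $t(a,1)\in\widetilde A(\mathbb A)$, it is natural that $L_\Phi$ be recorded on a maximal abelian subgroup of $\widetilde T(\mathbb A_\Phi)$.

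The torus bookkeeping in the second step is routine, being essentially a repetition of the change of variables already performed for the constant term. The conceptual heart, and the main obstacle, is the clean separation of archimedean data in the third step. The delicate point is that the Whittaker model of $\Theta_{GL_2}^{(3)}$ is \emph{not} unique at the finite places, as observed in Subsection~\ref{subsec31}, so one cannot factor the coefficient into one-dimensional local functionals place by place; what rescues the argument is precisely that all archimedean places are complex, where uniqueness does hold. I would therefore be careful to confirm that the identification with the $\Theta_{GL_2}^{(3)}$ coefficient through Prop.~\ref{propconstant1}(i) genuinely brings $\theta^{U,\psi_{U,1}}$ within the scope of the separation argument of Lemma~\ref{lem3} for the relevant range of $a$, and that $L_\Phi$ is thereby well defined as a single functional on the full finite-place tensor product rather than as a product of local functionals.
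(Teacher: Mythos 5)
Your proposal is correct and takes essentially the same route as the paper: both start from \eqref{sp4wh2}, specialize $g=\mathrm{diag}(a,a^{-1})$, move $t(1,a)$ leftward past $m(x+\tfrac12)$ and $w_2w_1$ and apply the Weil action with the change of variables $x\mapsto a^{-1}x$ to produce the factor $\gamma(a)|a|^{-1/2}$ and the argument $z(r)\,t(a,1)\,w_2w_1\,m(x+\tfrac{a}{2})$, and then separate the archimedean places. Your treatment of the delicate point — that the finite-place Whittaker model is not unique, so $L_\Phi$ must be defined as a single functional $\ell_\Phi$ on the full finite tensor product rather than a product of local functionals, with the separation justified by uniqueness at the (complex) archimedean places — is precisely the paper's argument, which phrases the same idea as the factorizable intertwining operator convolved with the $GL_2$ Whittaker coefficient, and \eqref{factor2} then follows from Prop.~\ref{propconstant1}(i) just as you say.
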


\begin{proof}
To prove this, we obtain an integral for the finite part
of the Whittaker coefficient directly from Eqn.\ \eqref{sp4wh2}.
Let $U$ continue to denote the standard maximal unipotent
subgroup of $Sp_4$. Denote by $\psi_{U,1}$ the character of $U$
defined by $\psi_{U,1}(u)=\psi(u_{1,2})$, as in \eqref{sp4wh2}.
For $\theta\in\Theta_{Sp_4}^{(3)}$
let $\theta^{U,\psi}$ be the
corresponding coefficient; it
defines a functional $\ell$ on the space of
$\Theta_{Sp_4}^{(3)}$. This functional may be viewed as the convolution of the
intertwining operator with the Whittaker
coefficient on the $GL_2$ part. Note that because of the lack of
uniqueness of the Whittaker model, the functional
$\ell$ is not factorizable.  However, the intertwining operator
is factorizable and because of the uniqueness of the
Whittaker functional at the infinite places, we can separate out
the functional at the infinite places. Let $\ell_\Phi$ denote the
corresponding functional defined on $\otimes_{\nu\in\Phi} \big({\Theta_{Sp_4}^{(3)}}\big)_\nu$.
Then $L_\Phi$ is defined by $L_\Phi(g)=\ell_\Phi(\rho(g)\theta)$ where
$\rho$ is right translation.  With this definition, using \eqref{sp4wh2}, we see that \eqref{factor1} holds.

Thus we have
$L_\Phi(uh)=\psi_{U,1}(u)L_\Phi(h)$ for all $u\in U(\mathbb{A}_\Phi)$, $h\in Sp_4^{(3)}(\mathbb{A}_\Phi)$.
Also, since we assume that $L_\Phi$
is obtained from a factorizable vector $\theta$, there is a compact open group
$K_\theta:=\prod_{\nu\in\Phi}K_\nu$ such that $L_\Phi(hk)=L_\Phi(h)$, where
for almost all places $K_\nu$ is the maximal compact subgroup $Sp_4(O_\nu)$ of
$Sp_4(F_\nu)$, and at the remaining places $K_\nu$ is a
principal congruence subgroup.  Let $T$ denote the maximal torus of
$Sp_4$ which consists of diagonal elements.
We shall now choose a maximal abelian subgroup of $\widetilde{T}(\mathbb{A}_\Phi)$.
Let $\widetilde{T}_\nu$
denote the inverse image of $T_\nu$ in $Sp_4^{(3)}(F_\nu)$. For each
place, we choose a maximal abelian subgroup of $\widetilde{T}_\nu$,
denoted $\widetilde{T}^0_\nu$, as follows. Let $p$ denote a generator of the prime ideal
in the ring of integers of $F_\nu$. Then, from the properties of the local Hilbert
symbol $(~,~)_3$ of $F_\nu$,  we have $(p,-p)_3=1$. Since $-1$ is a cube,
it follows that $(p^{k_1},p^{k_2})_3=1$ for all integers $k_1$ and
$k_2$. Let $\widetilde{T}^0_\nu$ denote the group generated by the
center of $\widetilde{T}_\nu$ and the group of all matrices of the
form $\text{diag}(p^{k_1},p^{k_2},p^{-k_2},p^{-k_1})$. Finally, let
$\widetilde{T}^0= \prod_{\nu\in\Phi}\widetilde{T}^0_\nu$.  We fix
this as the maximal abelian subgroup in the $Sp_4$ case from now on.

Given $t=\text{diag}(t_1,t_2,t_2^{-1},t_1^{-1})$, denote $t'=\text{diag}(t_1,t_2)
\in GL_2$. Then it follows from Prop.~\ref{propconstant1} that for $t\in \widetilde{T}^0$ we have
\begin{equation}\label{factor2}
L_\Phi(t)=|t_1t_2|\,\overline{W}_{{\Theta^{(3)}_{GL_2}},\text{fin}}(t')\, L_\Phi(e).
\end{equation}
Here the function $W_{{\Theta^{(3)}_{GL_2}},\text{fin}}$ is the finite part of the Whittaker function
on a suitable vector in the representation $\otimes_{\nu\in\Phi}(\Theta_{GL_2}^{(3)})_\nu$. Also, the
factor $|t_1t_2|$ is equal to $\delta_P^{1-s_0}(t)$ where $s_0=2/3$ is the point where
$\Theta_{Sp_4}^{(3)}$ is defined as a residue of an Eisenstein series. See Section 3.
\end{proof}

The situation in the case of descent from $SO_7^{(4)}$ to $SO_4^{(4)}$ is similar. 
We recall that $-1$ is a fourth power in $F^\times$.
Applying the same reasoning starting from the expression
\eqref{whso74}  
we obtain
\begin{lemma}\label{newlemma2} The Whittaker coefficient \eqref{whso720} has finite 
part given by
\begin{equation}\label{factor3}
|a|_\Phi^{-1}\int\limits_{{\mathbb{A}}_\Phi^3} L_\Phi(z(r_1,r_2)t(a,1,1)
y_2(x))\,\psi(r_1)\,dr_1\,dr_2\,dx,
\end{equation}
where the function $L_\Phi$ is described below and is computed on a certain maximal abelian subgroup of
$\widetilde{T}({\mathbb A}_\Phi)$ by  \eqref{factor4} below.
\end{lemma}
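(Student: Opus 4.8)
The plan is to run the argument of Lemma~\ref{newlemma1}, now starting from the expression \eqref{whso74} and replacing the $Sp_4\to SL_2$ data by the $SO_7\to SO_4$ data. First I would observe that the inner Fourier coefficient $\theta^{U,\psi_{U,1}}$ appearing in \eqref{whso74}, where $U$ is the maximal unipotent subgroup of $SO_7$ and $\psi_{U,1}(u)=\psi(u_{1,2}+u_{2,3})$, defines a functional $\ell$ on the space of $\Theta_{SO_7}^{(4)}$. As in the $Sp_4$ case, this $\ell$ is the composite of the intertwining operator realizing $\Theta_{SO_7}^{(4)}$ with a Whittaker coefficient on the $GL_3$ Levi factor of $P$; because the Whittaker model of $\Theta_{GL_3}^{(4)}$ is not unique (see Section~\ref{definitions}), $\ell$ is not globally factorizable.

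The key step is to separate off the archimedean places. By Lemma~\ref{lem3}, all infinite places are complex, so the cover is trivial there and the space of Whittaker functionals is one dimensional; hence the archimedean functional splits off and leaves a functional $\ell_\Phi$ on $\otimes_{\nu\in\Phi}(\Theta_{SO_7}^{(4)})_\nu$. Setting $L_\Phi(g)=\ell_\Phi(\rho(g)\theta)$, with $\rho$ right translation, and substituting into \eqref{whso74} yields \eqref{factor3} directly. Next I would fix the maximal abelian subgroup $\widetilde{T}^0=\prod_{\nu\in\Phi}\widetilde{T}^0_\nu$ of $\widetilde{T}(\mathbb{A}_\Phi)$, where $\widetilde{T}^0_\nu$ is generated by $Z(\widetilde{T}_\nu)$ together with the diagonal elements $t(p^{k_1},p^{k_2},p^{k_3})$; since $-1$ is a fourth power in $F^\times$ and $(p,-p)=1$, the relevant fourth-order Hilbert symbols satisfy $(p^{k_1},p^{k_2})=1$, the analogue of the cubic identity used for $Sp_4$, so that $\widetilde{T}^0_\nu$ is abelian. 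Finally, restricting $L_\Phi$ to $\widetilde{T}^0$ and invoking Proposition~\ref{propconstant2}, part (i) --- which identifies the constant term of $\Theta_{SO_7}^{(4)}$ along $U(P)$ with $|\det h|$ times a vector in $\Theta_{GL_3}^{(4)}$ --- gives
\begin{equation}\label{factor4}
L_\Phi(t)=|t_1t_2t_3|\,W_{\Theta^{(4)}_{GL_3},\text{fin}}(t')\,L_\Phi(e),
\end{equation}
where $t=\text{diag}(t_1,t_2,t_3,1,t_3^{-1},t_2^{-1},t_1^{-1})$, $t'=\text{diag}(t_1,t_2,t_3)\in GL_3$, the function $W_{\Theta^{(4)}_{GL_3},\text{fin}}$ is the finite part of the Whittaker function on a suitable vector in $\otimes_{\nu\in\Phi}(\Theta_{GL_3}^{(4)})_\nu$, and $|t_1t_2t_3|=\delta_P^{1-s_0}(t)$ with $s_0=2/3$.

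I expect the main obstacle to be, as in the $Sp_4$ case, the non-uniqueness of the Whittaker model on $\Theta_{GL_3}^{(4)}$ (and hence on $\Theta_{SO_7}^{(4)}$), which blocks a naive factorization of the global functional $\ell$. The remedy is that the cover is trivial at the complex archimedean places, where the Whittaker functional is therefore unique, so that Lemma~\ref{lem3} lets us cleanly extract the finite part. A secondary, purely technical point is the verification that the relevant Hilbert symbols vanish on the prime-power diagonal torus, so that $\widetilde{T}^0_\nu$ is maximal abelian; this is the four-fold-cover analogue of the cubic computation used for $Sp_4$ and relies on $-1$ being a fourth power.
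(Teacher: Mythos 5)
Your proof is correct and follows essentially the same route as the paper, which itself proves Lemma~\ref{newlemma2} by ``applying the same reasoning'' as Lemma~\ref{newlemma1} to the expression \eqref{whso74}: separating the archimedean functional via the uniqueness of the Whittaker functional at the complex places (the cover being trivial there), defining $L_\Phi$ through the resulting finite-part functional, and deducing \eqref{factor4} from Prop.~\ref{propconstant2}, part (i), on a maximal abelian subgroup $\widetilde{T}^0$ built as in the $Sp_4$ case using that $-1$ is a fourth power. Your treatment of the torus Hilbert-symbol computation and the identification $|t_1t_2t_3|=\delta_P^{1-s_0}(t)$ with $s_0=2/3$ matches (indeed slightly elaborates on) the paper's sketch.
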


The function $L_\Phi$ in this case  is
obtained from a functional on $\Theta_{SO_7}^{(4)}$ applied to
a vector in this space; the function $L_\Phi$ satisfies the property
$L_\Phi(uh)=\psi_{U,1}(u)\,L_\Phi(h)$ for all $u\in U(\mathbb{A}_\Phi)$, 
$h\in SO_7^{(4)}(\mathbb{A}_\Phi)$, where
now $U$ is the maximal unipotent subgroup of $SO_7$ and
$\psi_{U,1}(u)=\psi(u_{1,2}+u_{2,3})$, as in \eqref{four3313}
(recall that $\Theta_{SO_7}^{(4)}$ is not generic; a factorization similar
to Lemma~\ref{lem3} holds for similar reasons). Analogously  to
property \eqref{factor2}, one has
\begin{equation}\label{factor4}
L_\Phi(t)=|t_1t_2t_3|W_{{\Theta^{(4)}_{GL_3}},\text{fin}}(t')\,L_\Phi(e)
\end{equation}
where the group $\widetilde{T}^0$ is defined similarly to the prior case, and
for $t=t(t_1,t_2,t_3 )\in \widetilde{T}^0$, we
denote $t'=\text{diag}(t_1,t_2,t_3)\in GL_3$.

\section{Properties of the Descent Integrals}\label{section5}

In this section we let $H$ be one of the groups $Sp_4$, $SO_7$.
Let $n=3$ if $H=Sp_4$, and $n=4$ if $H=SO_7$. We let $H^{(n)}$
denote the $n$-fold cover of $H$.  In the first case we let
$G=SL_2$, and in the second case we let $G=SO_4$. Let $m=6$ in the
first case and $m=4$ in the second. We also recall that we have fixed
a maximal abelian subgroup $\widetilde{A}(\mathbb{A})$ of $\tilde{T}(\mathbb{A})
\subseteq H^{(n)}(\mathbb{A})$.  We shall specify a specific choice, written $\tilde{T}_0$,
later in this section.
In the previous sections we
constructed nonzero representations $\sigma_G^{(m)}$ of the group
$G^{(m)}({\mathbb{A}})$ acting on functions on $G(F)\backslash G^{(m)}({\mathbb{A}})$, 
and computed their constant term and a certain
Whittaker coefficient. We have

\begin{proposition}\label{prop5-1}
The representations $\sigma_G^{(m)}$ are each in $L^2(G(F)\backslash
G^{(m)}({\mathbb{A}}))$.  Moreover the projection of each representation to the residual spectrum is non-zero.
\end{proposition}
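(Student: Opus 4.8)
The plan is to prove square-integrability by a standard Langlands-type argument: bound the relevant constant terms and then invoke the criterion for a residual (hence $L^2$) automorphic form. The key observation is that we have already computed, in the constant-term subsections, the constant term of $f_G^{(m)}$ along the unipotent radical of the Borel of $G$ in terms of data coming from the theta representation $\Theta^{(n)}_H$. Specifically, for the $SL_2^{(6)}$ descent, equation \eqref{sp45} expresses the constant term with respect to $g=t(a)$ essentially as a $\phi$-twisted integral of $\theta^{U,\psi_U}$ evaluated at a torus element, and Prop.~\ref{propconstant1}(ii) tells us that on the maximal abelian subgroup this is governed by the theta representation $\Theta_{SL_2}^{(3)}$; similarly \eqref{so74} handles the $SO_4^{(4)}$ descent via Prop.~\ref{propconstant2}. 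The first step, therefore, is to read off the \emph{exponents} appearing in these constant terms.

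Concretely, I would show that the constant term along the unipotent radical of any proper parabolic of $G$ is a sum of terms each of which, as a function on the torus $\widetilde{T}(\mathbb{A})$, is supported on characters whose real parts lie strictly inside the appropriate obtuse (negative) Weyl chamber. For the $Sp_4$ descent, the factor $|a|^{-1/2}\gamma(a)$ in \eqref{sp45} together with the growth of $\theta_{SL_2}^{(3)}$ on the torus (which is itself a residue, hence controlled) gives a character whose exponent is negative; the residual nature of $\Theta_{SL_2}^{(3)}$, defined at $s=7/12$ as in Section~\ref{definitions}, ensures the requisite strict inequality. An analogous computation using \eqref{so74} and \eqref{factor4} handles $SO_4^{(4)}$, where $SO_4\cong (SL_2\times SL_2)/\{\pm1\}$ and one checks the exponent on each $SL_2$-factor. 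Once every nontrivial constant term is seen to decay in the negative chamber, the Jacquet--Langlands square-integrability criterion (as in the residual-spectrum theory of M\oe glin--Waldspurger \cite{M-W}, which applies verbatim to covering groups since the unipotent integration is insensitive to the cocycle) shows that $f_G^{(m)}\in L^2(G(F)\backslash G^{(m)}(\mathbb{A}))$.

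For the second assertion — nonzero projection to the residual spectrum — I would argue that the constant-term exponents computed above are in fact those of a residual Eisenstein series on $G^{(m)}$, namely the exponents defining $\Theta_{SL_2}^{(6)}$ (resp.\ the analogous residual representation on $SO_4^{(4)}$). Projecting $\sigma_G^{(m)}$ onto the residual spectrum and pairing against the cuspidal and continuous parts, one uses that the cuspidal contribution has vanishing constant term while the continuous-spectrum contribution carries exponents on the unitary axis; since our constant term is nonzero (this is exactly the content of the nonvanishing statements, \eqref{whsl21} for $Sp_4$ and its $SO_7$ analogue, combined with Lemmas~\ref{nonzerosp4} and \ref{nonzeroso7}) and has exponents strictly in the interior, it cannot be absorbed into either of those pieces, forcing a nonzero residual component.

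The main obstacle I anticipate is making the exponent bookkeeping \emph{strict} rather than merely non-positive: square-integrability requires the constant-term exponents to lie in the open negative chamber, and one must verify that the poles producing $\Theta_H^{(n)}$ (at $s_1=s_2=2/3$ for $Sp_4$, at the triple $(3/4,2/4,1/4)$ for $SO_7$) transfer under the descent to strictly interior exponents on $G$, with no boundary cases. This amounts to tracking the residue data through the root exchanges and through Props.~\ref{propconstant1} and \ref{propconstant2} carefully; the arithmetic is routine but must be done precisely, since a single exponent landing on the wall would only yield a tempered rather than square-integrable contribution. The covering-group aspect itself causes no difficulty here, since by the root-exchange discussion of Subsection~\ref{exchange} the cocycle contributes no nontrivial factors to any of the unipotent integrations involved.
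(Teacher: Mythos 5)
There is a genuine gap at the heart of your square-integrability argument: you invoke the Jacquet square-integrability criterion (via the residual-spectrum theory of \cite{M-W}), but that criterion applies to automorphic forms, that is, functions which are in particular $\mathcal Z$-finite at the archimedean places --- and this is precisely what is not known for the descent functions. The paper is explicit about this: immediately after the proposition it states that it is not known whether the descents are automorphic, and it remarks that if $\mathcal Z$-finiteness at infinity were known, the proposition would indeed follow purely from the exponents of the constant term by Jacquet's criterion, with rapid decay of the non-degenerate Whittaker coefficients coming for free. Without $\mathcal Z$-finiteness, strictly negative constant-term exponents by themselves do not control the non-degenerate terms of the Fourier expansion, so your concluding step (``once every nontrivial constant term is seen to decay\dots the criterion shows $f_G^{(m)}\in L^2$'') fails as stated; the issue is not the covering-group cocycle, which you correctly observe is harmless, but the archimedean analytic hypotheses of the criterion.

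What the paper does instead, and what your proposal is missing, is a direct proof that the non-degenerate Whittaker coefficients are rapidly decreasing: by the theorem of Dixmier--Malliavin \cite{D-M} one writes $\theta$ as a convolution along a suitable one-parameter unipotent subgroup against a Schwartz--Bruhat function; conjugating that unipotent across the torus produces a factor $\hat\phi_1(a^2)$ (resp.\ $\hat\phi_1(b)\,\hat\phi_2(a^2b)$ in the $SO_7$ case) which decays rapidly, while the remaining integral grows at most polynomially. Combined with the constant-term computation --- which your exponent bookkeeping reproduces correctly, the constant term mapping $\sigma_{SL_2}^{(6)}$ into $Ind\,\delta_B^{-1/12}\delta_B^{1/2}$ and $\sigma_{SO_4}^{(4)}$ into $Ind\,\delta_B^{-1/8}\delta_B^{1/2}$ --- and the Fourier expansion on $SL_2$ (resp.\ $SO_4$), this yields square-integrability directly. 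Note also that in the $SO_4$ case your proposal overlooks the intermediate degenerate coefficients, supported on exactly one of $r_1,r_2$, which must be (and in the paper are, by the same convolution device) shown to be rapidly decreasing in one torus variable and slowly increasing in the other. Your argument for the nonzero projection to the residual spectrum --- nonzero constant term excludes the cuspidal part, strictly negative exponents exclude the continuous spectrum --- does agree with the paper's and is fine once the $L^2$ statement and the decay of the non-constant terms are secured.
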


\begin{proof}
The proof that the functions obtained by the descent are square-integrable has two parts.  
First, we shall show that for each such function $f(g)$,
the non-degenerate Whittaker coefficients are
rapidly decreasing functions.  Second, we shall examine the degenerate Whittaker coefficients.

We consider  first the $Sp_4$ case. To show that the non-degenerate Whittaker coefficients are rapidly decreasing,
we begin with the expression \eqref{whittaker1} for these coefficients, but with $-\frac14$ replaced
by a general $\beta\in F^\times$.  Let $g=t(1,a)$.  Moving $g$ to the left in the integrand,
applying the action of the Weil representation, and changing variables, we find
that the non-degenerate Fourier coefficient is given by
\begin{multline*}\gamma(a)\,|a|^{-1/2}
\int\limits_{{\mathbb{A}}}\int\limits_{(F\backslash {\mathbb
A})^3}\phi(x)\,\theta\left (\begin{pmatrix} 1&&y&z\\ &1&r&y\\ &&1&\\
&&&1\end{pmatrix}t(1,a)\begin{pmatrix} 1&x&&\\ &1&&\\ &&&1&-x\\
&&&&1\end{pmatrix} \right )\\ \times 
\psi(z+\beta r)\,\,dy\,dz\,dr\,dx
\end{multline*}
The convolution of $\phi$ with $\theta$ obtained from the $x$ integration gives another function in the theta space and so
it is sufficient to show that the integral
$$
\int\limits_{(F\backslash {\mathbb
A})^3}\theta\left (\begin{pmatrix} 1&&y&z\\ &1&r&y\\ &&1&\\
&&&1\end{pmatrix}t(1,a)\right ) \psi(z+\beta r)\,\,dy\,dz\,dr
$$
is rapidly decreasing in $a$ for any function $\theta$ in the space of $\Theta_{Sp_4}^{(3)}$.

By the theorem of Dixmier-Malliavin \cite{D-M} we may suppose that $\theta$ is a convolution 
$$\theta(g)=\int_{\mathbb A} \theta(g x_\alpha(m))\,\phi_1(m)\,dm$$
where $x_\alpha(m)=I_4+m e_{2,3}$ 
and $\phi_1$ is a Schwartz-Bruhat function on $\mathbb A$.  Substituting in, conjugating $x_\alpha(m)$ to the left,
and changing variables, we see that the last integral becomes
$$
\hat{\phi}_1(a^2)\int\limits_{(F\backslash {\mathbb
A})^3}\theta\left (\begin{pmatrix} 1&&y&z\\ &1&r&y\\ &&1&\\
&&&1\end{pmatrix}t(1,a)\right ) \psi(z+\beta r)\,\,dy\,dz\,dr,
$$
where $\hat\phi_1$ is the Fourier transform of $\phi_1$.
Since this Fourier transform is again a Schwartz-Bruhat function while $\theta$ grows at most polynomially
in $|a|$, this expression implies that
the Whittaker coefficients attached to a non-degenerate character are indeed rapidly decreasing in $a$.

Since the non-zero Whittaker coefficients are rapidly decreasing, to see that functions in the descent space
are square integrable, it suffices to examine the constant term.
Here we use the identity of
integral \eqref{sp44} with integral \eqref{sp45}, and
the identity in Prop.~ \ref{propconstant1}, part (ii). It
follows from these results that if $\text{diag}(a,1,1,a^{-1})$ lies
in $\widetilde{A}({\mathbb{A}})$, then the integral
\eqref{sp45} is equal to
\begin{equation}\label{spec1}
|a|^{5/6}\gamma(a)\int\limits_{{\mathbb{A}}}\phi(x)\theta^{U,\psi_U}\left (
w_1\begin{pmatrix} 1&x&&\\ &1&&\\ &&&1&-x\\
&&&&1\end{pmatrix} \right )dx.\notag
\end{equation}
From this we deduce the identity
\begin{equation}\label{spec2}
\int\limits_{F\backslash {\mathbb{A}}}f_{SL_2}^{(6)}\left (\begin{pmatrix} 1&r\\
&1\end{pmatrix}\begin{pmatrix} a&\\ &a^{-1}\end{pmatrix}\right )dr=
|a|^{5/6}\gamma(a)\int\limits_{F\backslash {\mathbb{A}}}f_{SL_2}^{(6)}\begin{pmatrix} 1&r\\
&1\end{pmatrix}dr\notag
\end{equation}
for all diagonal matrices which lie in the corresponding maximal abelian subgroup
of the diagonal elements of $SL_2$.  This means that the constant
term defines an $SL_2^{(6)}({\mathbb{A}})$ mapping from the
representation $\sigma_{SL_2}^{(6)}$ into the representation
$Ind_{\widetilde{B}({\mathbb{A}})}^{SL_2^{(6)}({\mathbb
A})}\delta_B^{-1/12}\delta_B^{1/2}$. Indeed, it follows from the
discussion  at the beginning of Section~\ref{definitions} that the
induced representation is determined uniquely by the values of the
character on a maximal abelian subgroup of the diagonal
elements.

It now follows directly from the Fourier expansion on $SL_2$
that each function in $\sigma_{SL_2}^{(6)}$ is square-integrable. 
Also, since this representation has a nonzero constant term, it 
is not contained in the cuspidal subspace.  In fact, the constant term has negative exponents, 
so may not be obtained from the continuous spectrum.  We conclude that $\sigma_{SL_2}^{(6)}$
has non-zero projection to the residual spectrum.  

We remark that if we knew that functions in 
$\sigma_{SL_2}^{(6)}$ were $\mathcal Z$-finite at infinity, where $\mathcal Z$ is the center of the universal
enveloping algebra, then we would be able to deduce these results purely from the computation of
the exponents of the constant term, by Jacquet's criterion. In that case the non-degenerate Whittaker coefficients
would automatically be rapidly decreasing.

The case when $H=SO_7$ is similar, but slightly more complicated.  For the non-degenerate Whittaker coefficients, 
we must study the growth of
\begin{equation*}
\int\limits_{(F\backslash {\mathbb{A}})^2}f_{SO_4}^{(4)}\left
(\begin{pmatrix} 1&r_1&r_2&-r_1r_2\\ &1&&-r_2\\ &&1&-r_1\\
&&&1\end{pmatrix}
\begin{pmatrix} ab&&&\\ &a&&\\ &&a^{-1}&\\ &&&a^{-1}b^{-1}\end{pmatrix}
\right )\,\psi(r_1-\frac{1}{2}r_2) \, dr_1\, dr_2
\end{equation*}
in $a$ and $b$.  Once again applying the theorem of Dixmier-Malliavin, we may study the growth of
\begin{multline*}
\int\limits_{{\mathbb A}^2}
\int\limits_{(F\backslash {\mathbb{A}})^2}f_{SO_4}^{(4)}\left
(\begin{pmatrix} 1&r_1&r_2&-r_1r_2\\ &1&&-r_2\\ &&1&-r_1\\
&&&1\end{pmatrix}
\begin{pmatrix} ab&&&\\ &a&&\\ &&a^{-1}&\\ &&&a^{-1}b^{-1}\end{pmatrix}
\begin{pmatrix} 1&m_1&m_2&*&\\&1&&-m_2\\&&1&-m_1\\&&&1\end{pmatrix}
\right )\\
\times \phi_1(m_1)\,\phi_2(m_2)\,\psi(r_1-\frac{1}{2}r_2) \, dr_1\, dr_2\,dm_1\,dm_2,
\end{multline*}
where $\phi_1$, $\phi_2$ are Schwartz-Bruhat functions.
Moving the matrix in $m_1,m_2$ leftward and changing variables, we arrive at 
\begin{multline*} \hat{\phi}_1(b)\,\hat{\phi}_2(a^2b)\,
\int\limits_{(F\backslash {\mathbb{A}})^2}f_{SO_4}^{(4)}\left
(\begin{pmatrix} 1&r_1&r_2&-r_1r_2\\ &1&&-r_2\\ &&1&-r_1\\
&&&1\end{pmatrix}
\begin{pmatrix} ab&&&\\ &a&&\\ &&a^{-1}&\\ &&&a^{-1}b^{-1}\end{pmatrix}
\right )\\ \times\psi(r_1-\frac{1}{2}r_2) \, dr_1\, dr_2\,dm_1\,dm_2,
\end{multline*}
Since the product of the Fourier transforms is rapidly decreasing in $|a|$, $|b|$, while the remaining term is a compact integral of an
automorphic form and hence at most slowly increasing in $|a|$, $|b|$, we conclude that the non-degenerate Whittaker
coefficients are indeed rapidly decreasing.

As for the constant term, 
starting with the fact that
integral \eqref{so72} is equal to integral \eqref{so74}, we then use the
two last parts of Prop.~\ref{propconstant2}. We deduce the
equality
\begin{equation}\label{spec3}
\int\limits_{(F\backslash {\mathbb{A}})^2}f_{SO_4}^{(4)}\left (l(r_1,r_2)
\begin{pmatrix} ab&&&\\ &b&&\\ &&b^{-1}&\\ &&&a^{-1}b^{-1}\end{pmatrix}
\right ) dr_1\,dr_2=|ab|^{3/4}\,\int\limits_{(F\backslash {\mathbb
A})^2}f_{SO_4}^{(4)}(l(r_1,r_2))\,dr_1\,dr_2\notag
\end{equation}
where
$$l(r_1,r_2)=\begin{pmatrix}
1&r_1&r_2&-r_1r_2\\ &1&&-r_2\\ &&1&-r_1\\ &&&1\end{pmatrix}$$ and
$\text{diag}(ab,b,b^{-1},a^{-1}b^{-1})$, embedded via \eqref{so73}, lies in $\widetilde{A}(\mathbb{A})$.
Hence, the constant
term maps $\sigma_{SO_4}^{(4)}$ into the principal series $Ind_{\widetilde{B}({\mathbb
A})}^{SO_4^{(4)}({\mathbb{A}})}\delta_B^{-1/8}\delta_B^{1/2}$. 

There are also two intermediate terms, corresponding to the characters of that are supported on exactly
one of $r_1$, $r_2$.  Arguing as above, we see that these coefficients are rapidly decreasing in one variable $a,b$
and slowly increasing in the other.  Then the Fourier expansion on $SO_4$ 
shows
that each function in $\sigma_{SO_4}^{(4)}$ is square-integrable.   Once again,
since this representation has a constant term, it 
is not contained the subspace of cuspidal $L^2$ functions, and since the constant term has negative exponents, 
it may not be obtained from the continuous spectrum.  We conclude that $\sigma_{SO_4}^{(4)}$
has non-zero projection to the residual spectrum.  
\end{proof}

At this point, we do not know that the descents are themselves automorphic.  This is essentially
a question about archimedean Whittaker functions, and would follow if we could show that the descent
at the archimedean places yielded $\mathcal Z$-finite functions.  Though  we only know that the descent
projects non-trivially to the space of $\Theta_G^{(m)}$, there are other examples where a descent construction is
both automorphic and irreducible (Jiang-Soudry \cite{J-S}).  Hence we make the following conjecture.

\begin{conjecture}\label{conjecture1}
For each of the descents treated here,  $\sigma_G^{(m)}=\Theta_G^{(m)}$.
\end{conjecture}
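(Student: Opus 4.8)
The plan is to deduce the equality of the two residual representations from the information already assembled, the one missing ingredient being the automorphy of the descent. Recall from Proposition~\ref{prop5-1} that each $\sigma_G^{(m)}$ lies in $L^2(G(F)\backslash G^{(m)}(\mathbb{A}))$ and has non-zero projection to the residual spectrum, and that (as noted in the discussion preceding this conjecture) this projection meets the space of $\Theta_G^{(m)}$. Thus the strategy divides into two parts. First I would show that every function in $\sigma_G^{(m)}$ is an automorphic form, equivalently that the descent vectors are $\mathcal{Z}$-finite at the archimedean places: the $K$-finiteness and the moderate growth are immediate (the functions are built by integrating an automorphic form on $H^{(n)}$ over the compact quotient $U(F)\backslash U(\mathbb{A})$), and the rapid decrease of the non-degenerate coefficients was already established via Dixmier--Malliavin in the proof of Proposition~\ref{prop5-1}. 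Granting automorphy, a function in $\sigma_G^{(m)}$ is a square-integrable automorphic form, so $\sigma_G^{(m)}$ is a semisimple subrepresentation of the residual spectrum, and it remains only to identify its irreducible constituents.

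For that identification I would use the constant-term computations of Section~\ref{section5}. These show that the constant term along the Borel carries $\sigma_{SL_2}^{(6)}$ into $Ind_{\widetilde{B}(\mathbb{A})}^{SL_2^{(6)}(\mathbb{A})}\delta_B^{-1/12}\delta_B^{1/2}$ and $\sigma_{SO_4}^{(4)}$ into $Ind_{\widetilde{B}(\mathbb{A})}^{SO_4^{(4)}(\mathbb{A})}\delta_B^{-1/8}\delta_B^{1/2}$, with exponents $\delta_B^{5/12}$ and $\delta_B^{3/8}$ respectively. These match the exponents of the constant term of the theta residue: for $SL_2^{(6)}$ one has $5/12=1-s_0$ at the pole $s_0=7/12$ defining $\Theta_{SL_2}^{(6)}$. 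Since the residual spectrum of the rank-one group $SL_2^{(6)}(\mathbb{A})$ consists exactly of residues of Borel Eisenstein series, a square-integrable automorphic representation whose constant term has exponent $\delta_B^{5/12}$ must be the residue at $s_0=7/12$, namely $\Theta_{SL_2}^{(6)}$; for $SO_4^{(4)}$, which is essentially a product of two copies of $SL_2$, the same analysis applies factor by factor, and Jacquet's criterion reconfirms square-integrability since the exponents lie strictly inside the negative chamber. Hence every irreducible constituent of $\sigma_G^{(m)}$ is isomorphic to $\Theta_G^{(m)}$. As $\Theta_G^{(m)}$ is irreducible (being the restricted tensor product of the irreducible local theta representations, which follows as for $\Theta_{Sp_4}^{(3)}$ in Section~\ref{definitions}) and occurs with multiplicity one in the residual spectrum, the $\Theta_G^{(m)}$-isotypic component is a single copy of $\Theta_G^{(m)}$; the non-vanishing of $\sigma_G^{(m)}$ then forces $\sigma_G^{(m)}=\Theta_G^{(m)}$.

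The main obstacle is the first step, the automorphy of the descent, and it is genuinely archimedean. At the infinite places the cover splits, so one must analyze the archimedean local descent integral --- the integral \eqref{sp44}, resp.\ \eqref{so72}, with the global theta replaced by the archimedean theta representation (a constituent of a degenerate principal series) and the Weil representation at infinity --- and prove that it outputs a $\mathcal{Z}$-finite vector. The difficulty is that integrating a $\mathcal{Z}_H$-finite function against a character of $U$ and restricting to $G$ does not obviously preserve finiteness under the center of the enveloping algebra of $G$. The natural target is the archimedean analogue of the global statement itself: that this local descent of the archimedean theta on $H$ embeds in the archimedean theta representation on $G$, which is $\mathcal{Z}$-finite. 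Establishing such an archimedean identity, or even just the $\mathcal{Z}$-finiteness, appears to require explicit computation of archimedean (degenerate) Whittaker functions of precisely the kind we have deliberately avoided. The precedent of Jiang--Soudry \cite{J-S}, where a descent is shown to be both automorphic and irreducible, suggests this is feasible in principle; carrying it out would moreover yield the irreducibility of $\sigma_G^{(m)}$ directly, collapsing the constituent analysis of the second step.
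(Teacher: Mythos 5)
You should note first that the statement you were asked to prove is Conjecture~\ref{conjecture1}: the paper does not prove it, and in the discussion immediately preceding it the authors explain exactly why --- they cannot show the descent functions are automorphic, which they reduce to showing that the archimedean local descent yields $\mathcal{Z}$-finite vectors. Your proposal correctly isolates this same obstruction, but it does not overcome it; your ``first step'' \emph{is} the open problem, and you acknowledge as much in your final paragraph. So what you have is a conditional reduction in the spirit of the paper's own remarks (including the analogy with Jiang--Soudry \cite{J-S}), not a proof of the conjecture, and it should be presented as such.

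Moreover, even granting automorphy, your second step has genuine gaps. First, square-integrable automorphic forms span the full discrete spectrum, cuspidal plus residual; your assertion that $\sigma_G^{(m)}$ is then a subrepresentation of the \emph{residual} spectrum is unjustified. Proposition~\ref{prop5-1} gives a non-zero projection to the residual spectrum, but nothing in the paper rules out non-zero cuspidal constituents of $\sigma_G^{(m)}$ --- cuspidal constituents have vanishing constant term and are therefore invisible to your exponent-matching argument, and genuine cusp forms on $SL_2^{(6)}(\mathbb{A})$ do exist. Second, you invoke irreducibility of $\Theta_G^{(m)}$ and its multiplicity one in the residual spectrum; neither is established in the paper (for $SL_2^{(6)}$ the inducing character involves the Weil factor $\gamma$ at even-cover places, and the local exceptional representations there are not analyzed), and the classification ``residues of Borel Eisenstein series exhaust the residual spectrum, with the residue point determined by the exponent'' requires the M\oe glin--Waldspurger theory adapted to covering groups together with matching of the full genuine character on a maximal abelian subgroup of $\widetilde{T}$, not merely of the $\delta_B$-exponent. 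Third, for $SO_4^{(4)}$ the ``factor by factor'' reduction via $D_2=A_1\times A_1$ is delicate: the $4$-fold cover of $SO_4$ need not split as a product of covers of the two $SL_2$ factors (their preimages in the cover need not commute), so the rank-one analysis cannot simply be applied twice. Until the archimedean $\mathcal{Z}$-finiteness is established and these three points are repaired, the equality $\sigma_G^{(m)}=\Theta_G^{(m)}$ remains, as the paper says, a conjecture supported by the coefficient identities of Theorems~\ref{maincfh} and \ref{mainso7} rather than a theorem.
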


We shall see evidence for this conjecture below, in that there are functions in the space of $\sigma_G^{(m)}$
whose properties exactly match the conjectures of Patterson and Chinta-Friedberg-Hoffstein.

\section{ Some Local Computations}\label{section6}

Let us introduce the following notation for
the normalized $n$-th order Gauss sums.  Suppose that $\nu$ is a finite place with local
uniformizer $p$, $|p^{-1}|_\nu=q,$ $\psi_\nu$ is an additive character of conductor $O_\nu$,
$|b|_\nu\leq1$,  and $j\in\mathbb{Z}$, $(j,n)=1$.  Then we write
$$\int\limits_{|\epsilon|_\nu=1}(\epsilon,p)_n^j\,\psi_\nu(bp^{-1}\epsilon)\,d\epsilon=q^{-1/2}G_j^{(n)}(b,p).$$
Here $G_j$ is a normalized $n$-th order Gauss sum modulo $p$.
We extend the notation to composite moduli as in \cite{C-F-H}, pg.\ 151; thus if $p,q$ are local uniformizers
at places $\nu_1,\nu_2$ resp., then
\begin{equation}\label{gauss9}
G_j^{(n)}(b,pq)=(p,q)^j_{\nu_2}
(q,p)^j_{\nu_1}G_j^{(n)}(b,p)G_j^{(n)}(b,q),
\end{equation}
where we have indicated the fields for the two local residue symbols.

Fix $\nu$ to be a finite place such that all data is unramified at $\nu$.
In Lemmas~\ref{newlemma1}, \ref{newlemma2} we have obtained expressions for the finite parts of the
Whittaker coefficients of the descents.
In this section we will compute the integrals
\eqref{factor1} and \eqref{factor3} at $a=(a_\nu)_\Phi\in {\mathbb A}^\times_\Phi$
defined as follows. For all $\nu'\ne \nu$ we assume that $a_{\nu'}$
is a unit. At the place $\nu$ we let $a_\nu$ be a positive power of a local uniformizer
$p_\nu$.  Let $\theta=\otimes\theta_{\nu'}$ be a factorizable
vector whose Whittaker coefficients are not identically zero.  (The Whittaker functionals
can not be identically zero on all factorizable vectors by Lemmas~\ref{nonzerosp4}, \ref{nonzeroso7}.) 
Notice that in both cases, we have $t(a,1)$ and $t(a,1,1)$ are
in the corresponding groups $\widetilde{T}^0$, and that outside of
$\nu$ the element $p_\nu$ is a unit.

Consider first the $Sp_4$ case.
We can ignore the integration over $x$ in the integral
\eqref{factor1} in the following sense. In the previous Section we defined the
compact group $K_\theta=\prod_{\nu'\in\Phi}K_{\nu'}$ such that
$L_\Phi(hk)=L_\Phi(h)$. Choose a Schwartz function $\phi$ such that
$\phi_{\nu'}$ is one if $|x_{\nu'}|_{\nu'}$ is so small that
$m(x_{\nu'}+a_{\nu'}/2)\in K_{\nu'}$, and zero otherwise. Then the integral
\eqref{factor1} is a positive multiple of
\begin{equation}\label{finwh3}
|a|_\Phi^{-1/2}\gamma(a)\int\limits_{{\mathbb
A}_\Phi}L_\Phi(z(r)t(a,1)w_2w_1)\,\psi(r)\,dr.
\end{equation}
(The map $\kappa$ defined
at the end of Section~\ref{basic-notations} is identically
1 on unipotent matrices, so no roots of unity are introduced here.)

In the $SO_7$ case, we argue as follows. Given a place $\nu'$ which is
unramified, let $y_3(m)=I_7+me_{1,6}'$ such that $y_3(m)\in K_{\nu
'}$. Consider the integral
\begin{equation}\label{finwh1}
Z(x_{\nu'})=\int\limits_{{\mathbb{A}}_{\Phi}^2}
L_\Phi(z(r_1,r_2)t(a,1,1) y_2(x_{\nu'}))\,\psi(r_1)\,dr_1\,dr_2.\notag
\end{equation}
Using the right invariant property of $L_\Phi$ at the place $\nu'$, we have
\begin{equation}\label{finwh2}
Z(x_{\nu'})=\int\limits_{{\mathbb{A}}_{\Phi}^2}
L_\Phi(z(r_1,r_2)t(a,1,1) y_2(x_{\nu'})y_3(m))\,\psi(r_1)\,dr_1\,dr_2.\notag
\end{equation}
Conjugating $y_3(m)$ to the left and using the left-invariance
properties of $L_\Phi$ under the group $V$, we obtain
$Z(x_{\nu'})=\psi(ma_{\nu'}x_{\nu'})Z(x_{\nu'})$.  From the
definition of $a$, we know that $a_{\nu'}$ is a unit.
Thus $Z(x_{\nu'})$ is zero
unless $|x_{\nu'}|\leq1$. In the bad places we may argue similarly to see
that the integral \eqref{factor2} is zero if $|x_{\nu'}|$ is large.
Since the function $Z(x_{\nu'})$ is locally constant, this
means that the integration over $x_{\nu'}\in F_{\nu'}$ can be
replaced by a finite sum of right translations, leading to an adjustment of the function
$L_\Phi$.

We now consider the $Sp_4$ case and the integral
\eqref{finwh3}. We claim that by a suitable change involving $L_\Phi$ we
can replace the integration over ${\mathbb{A}}_\Phi$ with an integration
over $F_\nu$. This follows from an argument similar to the one above.
More precisely, for an unramified  place $\nu'\ne\nu$,
define the function $Z_1(x_{\nu'})=L_\Phi(z(x_{\nu'})t(a,1))$. Let
$z_1(m)=I_4+me_{1,3}'$ with $m\in F_{\nu'}$, $|m|_{\nu'}\leq1$.  Then
$z_1(m)\in K_{\nu'}$. Arguing as above we obtain
$Z_1(x_{\nu'})=\psi(ma_{\nu'}x_{\nu'})\,Z_1(x_{\nu'})$. Again, from the
definition of $a$, we know that $a_{\nu'}$ is a unit. Hence
if $Z_1(x_{\nu'})\neq0$ then $z(x_{\nu'})\in K_{\nu'}$. Similarly, in a ramified
place $\nu'$, the integral is zero if $|x_{\nu'}|_{\nu'}$ is sufficiently large.
Recall that the function $L_\Phi$ is obtained by
applying the functional $\ell_\Phi$
to a suitable vector $\theta\in \Theta_{Sp_4}^{(3)}$
(see the discussion following \eqref{factor1}).  Replacing $\theta$ by a suitable finite sum of translations
which involve the bad places only and then applying this functional, we thus obtain a new function  which we
denote by $\widetilde{L}_\Phi$, such that the integral \eqref{finwh3} is
equal to
\begin{equation}\label{finwh4}
|a|_\nu^{-1/2}\gamma(a)\int\limits_{F_\nu}\widetilde{L}_\Phi(z(r_\nu)t(a,1))\,\psi_{\nu}(r_\nu)\,dr_\nu.
\end{equation}
Here $\psi_\nu$ is the local constituent of $\psi$.

To simplify notation we drop the subscript $\nu$.
To compute the integral \eqref{finwh4}, we first conjugate
$t(a,1)$ to the right. Then we write this integral as a sum of two
integrals, integrating separately over the regions $|r|\le 1$ and $|r|>1$. Since $\nu$ is an
unramified place, we may dispose of the $r$ variable in the first
integral. Using the property \eqref{factor2}, the contribution of $|r|\leq1$ to
integral \eqref{finwh4} is
$$|a|^{1/2}\gamma(a)\,\overline{W}_{{\Theta^{(3)}_{GL_2}},\text{fin}}\begin{pmatrix} a& \\&1\end{pmatrix}\,
\widetilde{L}_\Phi(e).$$

In the summand that
corresponds to the integration over $|r|>1$, we perform the Iwasawa decomposition of
\begin{equation}\label{iwas}
z(r)=\begin{pmatrix} 1&&&\\ &1&&\\ &r&1&\\ &&&1\end{pmatrix}=\begin{pmatrix}
1&&&\\ &1&r^{-1}&\\ &&1&\\ &&&1\end{pmatrix}\begin{pmatrix} 1&&&\\ &r^{-1}&&\\ &&r&\\&&&1\end{pmatrix}k
\end{equation}
where $k$ is in the maximal compact subgroup. It is easy to check that $\kappa(k)=1$.
This implies that the second integral is equal to
$$|a|^{-1/2}\gamma(a)\int\limits_{|r|>1}\widetilde{L}_\Phi\left(
\begin{pmatrix} a&&&\\ &1&&\\ &&1&\\ &&&a^{-1}\end{pmatrix}
\begin{pmatrix} 1&&&\\ &r^{-1}&&\\ &&r&\\ &&&1\end{pmatrix}\right)
\psi(r)\,dr.$$
Write $r=p^{-m}\epsilon$ where $m\geq1$ and $|\epsilon|=1$. Then the factorization
\begin{equation}\label{non-trivial-cocycle}\begin{pmatrix} a&&&\\ &1&&\\ &&1&\\ &&&a^{-1}\end{pmatrix}
\begin{pmatrix} 1&&&\\ &p^m\epsilon^{-1}&&\\ &&p^{-m}\epsilon &\\ &&&1\end{pmatrix}=
\begin{pmatrix} a&&&\\ &p^m&&\\ &&p^{-m}&\\ &&&a^{-1}\end{pmatrix}
\begin{pmatrix} 1&&&\\ &\epsilon^{-1}&&\\ &&\epsilon &\\ &&&1\end{pmatrix}
\end{equation}
contributes
a factor of $(\epsilon,p^m)_3$ due to the 2-cocycle. (Since $(p,p)_3=1$ we do not record an additional factor
of $(a,p^m)_3$.) Thus we find that the second integral is equal to
\begin{equation}\label{double-int}
|a|^{-1/2}\gamma(a)\sum_{m=1}^\infty q^m\widetilde{L}_\Phi\begin{pmatrix} a&&&\\ &p^m&&\\ &&p^{-m}&\\ &&&a^{-1}\end{pmatrix}
\int\limits_{|\epsilon|=1}(\epsilon,p^m)_3\,\psi(p^{-m}\epsilon)\,d\epsilon.
\end{equation}
If $m>1$, the inner integral in \eqref{double-int} is zero, and if $m=1$ it gives
$$\int\limits_{|\epsilon|=1}(\epsilon,p)_3\,\psi(p^{-1}\epsilon)\,d\epsilon=q^{-1/2}G_1^{(3)}(1,p).$$
Let
$\iota(p)$ be the finite idele which is $1$ outside of $\nu$ and $p$ at $\nu$.
Then we have shown that the second integral gives a contribution of
$$ |a|^{1/2}\gamma(a)\,q^{-1/2}\,G_1^{(3)}(1,p)\,\overline{W}_{{\Theta^{(3)}_{GL_2}},\text{fin}}\begin{pmatrix} a& \\&\iota(p)\end{pmatrix}\,\widetilde{L}_\Phi(e).$$

Given a Whittaker function $W_{\pi_M}$ attached to a representation 
$\pi_M$ on a group $M$ we define the normalized Whittaker coefficient
$\tau_{\pi_M}(t)=\delta_B^{-1/2}(t)W_{\pi_M}(t)$ where
$B$ is the Borel subgroup of $M$. 
This normalization is consistent with the theta coefficients described (in a non-adelic language) in \cite{C-F-H}.
(Here we suppress the choice of vector used to define $W_{\pi_M}$ from the notation.)
 
We summarize the results obtained here in the following Proposition.
Recall that we have fixed a local uniformizer $p_\nu$ at $\nu$.
\begin{proposition}\label{prop61}  Suppose that $\nu$ is an unramified place.
There is a vector in $\sigma^{(6)}_{SL_2}$ with $\tau_{{\sigma^{(6)}_{SL_2}},\text{fin}}(e)\ne0$
and a vector in ${\Theta^{(3)}_{GL_2}}$
such that for all finite ideles $a$ which are units outside of $\nu$ and a positive power of the local uniformizer
$p_\nu$ at $\nu$, the following equality of Whittaker coefficients holds:
\begin{equation}\label{finwh5}
\tau_{{\sigma^{(6)}_{SL_2}},\text{fin}}\begin{pmatrix} a&
\\&a^{-1}\end{pmatrix}=
\gamma(a)\left (\overline{\tau}_{{\Theta^{(3)}_{GL_2}},\text{fin}}\begin{pmatrix} a&
\\&1\end{pmatrix}+
G_1^{(3)}(1,p_\nu)\overline{\tau}_{{\Theta^{(3)}_{GL_2}},\text{fin}}\begin{pmatrix}
a& \\&\iota(p_\nu)\end{pmatrix}\right )\,\tau_{{\sigma^{(6)}_{SL_2}},\text{fin}}(e).
\end{equation}
\end{proposition}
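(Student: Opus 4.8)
The plan is to assemble the two contributions to the finite Whittaker coefficient that have already been isolated in the discussion preceding the statement, and then to re-express the result in terms of the normalized coefficients $\tau$. Starting from Lemma~\ref{newlemma1}, the finite part of the Whittaker coefficient \eqref{sp4wh1} at $g=t(a)$ is a positive multiple of \eqref{finwh3}, and the argument with the functions $Z_1$ reduces the integral over $\mathbb{A}_\Phi$ to the single local integral \eqref{finwh4}. I would then split this integral over $F_\nu$ into the ranges $|r|\le1$ and $|r|>1$. On $|r|\le1$ the place is unramified, $z(r)$ lies in the maximal compact and $\psi_\nu(r)=1$, so by \eqref{factor2} the contribution is
$$|a|^{1/2}\gamma(a)\,\overline{W}_{{\Theta^{(3)}_{GL_2}},\text{fin}}\begin{pmatrix}a&\\&1\end{pmatrix}\widetilde{L}_\Phi(e).$$
On $|r|>1$ the Iwasawa decomposition \eqref{iwas}, the cocycle factor coming from \eqref{non-trivial-cocycle}, and the Gauss-sum integral (nonzero only for $m=1$) give the second contribution
$$|a|^{1/2}\gamma(a)\,q^{-1/2}G_1^{(3)}(1,p)\,\overline{W}_{{\Theta^{(3)}_{GL_2}},\text{fin}}\begin{pmatrix}a&\\&\iota(p)\end{pmatrix}\widetilde{L}_\Phi(e),$$
where \eqref{factor2} has again been used to evaluate $\widetilde{L}_\Phi$ on the torus element $\text{diag}(a,p,p^{-1},a^{-1})$.

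The second step is bookkeeping with the normalizations $\tau_{\pi_M}(t)=\delta_B^{-1/2}(t)W_{\pi_M}(t)$. For $SL_2$ one has $\delta_B^{-1/2}(t(a))=|a|^{-1}$, while for $GL_2$ one has $\delta_B^{-1/2}\!\left(\text{diag}(a,b)\right)=|a/b|^{-1/2}$. The key point I want to verify carefully is that converting $\overline{W}_{{\Theta^{(3)}_{GL_2}},\text{fin}}\begin{pmatrix}a&\\&\iota(p)\end{pmatrix}$ to $\overline{\tau}_{{\Theta^{(3)}_{GL_2}},\text{fin}}$ produces the factor $|a/\iota(p)|^{1/2}=|a|^{1/2}q^{1/2}$, and that this $q^{1/2}$ exactly cancels the $q^{-1/2}$ coming from the Gauss-sum integral, so that the normalized Gauss sum $G_1^{(3)}(1,p)$ appears with coefficient $1$ as in \eqref{finwh5}. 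Summing the two contributions, dividing by $\delta_B^{1/2}(t(a))=|a|$ on the $SL_2$ side, and checking that the residual powers of $|a|$ and the Weil factor $\gamma(a)$ combine correctly then yields the asserted formula up to the overall constant $\widetilde{L}_\Phi(e)$.

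The final step, which I expect to be the most delicate, is to identify this overall constant with $\tau_{{\sigma^{(6)}_{SL_2}},\text{fin}}(e)$ and to check that it is nonzero. Evaluating the assembled formula at $a$ a unit and invoking the support (periodicity) properties of the $GL_2$ theta Whittaker coefficient established by Kazhdan-Patterson — which force $\overline{\tau}_{{\Theta^{(3)}_{GL_2}},\text{fin}}\begin{pmatrix}1&\\&\iota(p)\end{pmatrix}=0$, since the relevant argument has valuation not divisible by $3$ — the Gauss-sum term drops out, so the value at the identity reduces to $\overline{\tau}_{{\Theta^{(3)}_{GL_2}},\text{fin}}(e)\,\widetilde{L}_\Phi(e)$. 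With the unramified theta Whittaker value normalized to $1$ at the identity, this identifies $\widetilde{L}_\Phi(e)=\tau_{{\sigma^{(6)}_{SL_2}},\text{fin}}(e)$, and Lemma~\ref{nonzerosp4} guarantees that for a suitable factorizable vector this constant is nonzero. The main obstacle is therefore not any single computation but the coherent tracking of the several normalizing factors — the modulus characters of two different groups, the Weil factor $\gamma(a)$, and the powers of $q$ — so that the clean relation \eqref{finwh5} emerges; the vanishing of the $\iota(p)$-term at the identity is precisely what makes the self-referential appearance of $\tau_{{\sigma^{(6)}_{SL_2}},\text{fin}}(e)$ on the right-hand side legitimate.
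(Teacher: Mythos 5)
Your proposal is correct and follows essentially the same route as the paper: the reduction of \eqref{factor1} to the local integral \eqref{finwh4}, the split into $|r|\le 1$ and $|r|>1$ with the Iwasawa decomposition \eqref{iwas}, the cocycle factor from \eqref{non-trivial-cocycle}, the Gauss-sum integral surviving only at $m=1$, and the same $\delta_B^{-1/2}$ bookkeeping (with $q^{1/2}$ cancelling $q^{-1/2}$) are exactly the paper's computation. Your closing identification of $\widetilde{L}_\Phi(e)$ with $\tau_{{\sigma^{(6)}_{SL_2}},\text{fin}}(e)$, obtained by evaluating at a unit and using the vanishing of $\overline{\tau}_{{\Theta^{(3)}_{GL_2}},\text{fin}}\left(\begin{smallmatrix}1&\\&\iota(p)\end{smallmatrix}\right)$, merely spells out what the paper asserts in the remark following the Proposition and in the $K_\nu$-invariance comment at the end of Section~\ref{section6}, so it is an elaboration of the same argument rather than a different one.
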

\noindent 
Here the coefficients $\tau_{{\sigma^{(6)}_{SL_2}},\text{fin}}$ and $\tau_{{\Theta^{(3)}_{GL_2}},\text{fin}}$
depend on choices of vector but we have suppressed this from the notation.  
The factor $\tau_{{\sigma^{(6)}_{SL_2}},\text{fin}}(e)$ on the right hand side 
of \eqref{finwh5} comes from $\widetilde{L}_\Phi(e)$.  Note also that if we choose
the vector in $\sigma^{(6)}_{SL_2}$ so that $\tau_{{\sigma^{(6)}_{SL_2}},\text{fin}}(e)=1$ then it follows from \eqref{finwh5}
that the corresponding vector in ${\Theta^{(3)}_{GL_2}}$ satisfies $\tau_{{\Theta^{(3)}_{GL_2}},\text{fin}}(e)=1.$

Next we analyze the $SO_7$ integral \eqref{factor3}.
After omitting the integration over $x$, we consider the integral
$$|a|_\Phi^{-1}\int\limits_{{\mathbb{A}}_\Phi^2} L_\Phi(z(r_1,r_2)t(a,1,1)
)\,\psi(r_1)\,dr_1\,dr_2.$$ Arguing as in the $Sp_4$ case, immediately after Eqn.~\eqref{finwh3}, we may
reduce to the computation of the local integral
$$|a|_\nu^{-1}\int\limits_{F_\nu^2} \widetilde{L}_\Phi(z(r_1,r_2)t(a,1,1)
)\,\psi_\nu(r_1)\,dr_1\,dr_2,$$
where $\widetilde{L}_\Phi$ is obtained from a suitable sum of right translates,
similarly to the $Sp_4$ case above.
We omit the place $\nu$ from the notation. Consider first the integration over $|r_1|\le 1$.
From the right-invariance property of $L_\Phi$ we obtain that
$\widetilde{L}_\Phi(z(r_1,r_2)t(a,1,1))=\widetilde{L}_\Phi(z(0,r_2)t(a,1,1))$.
In this case we claim that we may also limit the domain of integration to $|r_2|\le 1$.
Indeed, if $|r_2|>1$, then we use the right-invariance of
$L_\Phi$ by the maximal compact subgroup (at $\nu$)
to deduce that $\widetilde{L}_\Phi(h)=\widetilde{L}_\Phi(hy(m))$ for
$y(m)=I_7+me_{2,4}-\frac{m^2}{2}e_{2,6}$ with $|m|\le 1$. Conjugating this matrix from right to left, we deduce that
$\widetilde{L}_\Phi(z(0,r_2)t(a,1,1))=0$ in this case.  Hence using \eqref{factor4} we obtain the contribution of
$$W_{{\Theta^{(4)}_{GL_3}},\text{fin}}\begin{pmatrix} a&& \\&1&\\ &&1\end{pmatrix}\,\widetilde{L}_\Phi(e)=|a|\,
\tau_{{\Theta^{(4)}_{GL_3}},\text{fin}}\begin{pmatrix} a&& \\&1&\\ &&1\end{pmatrix}\,\widetilde{L}_\Phi(e)$$
where $\tau$ is the normalized Whittaker coefficient of the theta function.

Next consider the domain $|r_1|>1$. Factoring $z(r_1,r_2)=z(0,r_2)z(r_1,0)$ and performing an Iwasawa decomposition
on $z(r_1,0)$ similarly to \eqref{iwas}, we obtain
the contribution of
$$|a|^{-1}\int\limits_F\int\limits_{|r_1|>1}\widetilde{L}_\Phi(t(a,1,1)t_1(r_1^{-1})z(0,r_2))\,
\psi(r_1)\,\psi(r_2^2r_1)\,|r_1|\,dr_1\,dr_2.$$
Here $t_1(r_1^{-1})=\text{diag}(1,r_1^{-1},r_1^{-1},1,r_1,r_1,1)$
and the factor of $\psi(r_2^2r_1)$ is obtained from the conjugation of the unipotent matrix obtained
from the Iwasawa decomposition to the left and a change of variables. 

As above, using the matrix $y(m)$ we deduce that the integral vanishes unless $|r_2|\le 1$. Thus we obtain
$$|a|^{-1}\int\limits_{|r_1|>1}\widetilde{L}_\Phi(t(a,1,1)t_1(r_1^{-1}))\,\psi(r_1)\,|r_1|\int\limits_{|r_2|\le 1}
\psi(r_2^2r_1)\,dr_2\,dr_1.$$
Write $r_1=p^{-m}\epsilon$ with $m\geq1$ and $|\epsilon|=1$.
The factorization $t_1(p^{-m}\epsilon)=t_1(p^{-m}\,)t_1(\epsilon)$
contributes the factor $(\epsilon,p)^{2m}_8=(\epsilon,p)_4^m$ from the 2-cocycle. We thus obtain
\begin{equation}\label{finwh6}
|a|^{-1}\sum_{m=1}^\infty q^{2m}\widetilde{L}_\Phi(t(a,1,1)t_1(p^m))\int\limits_{|\epsilon|=1}\int\limits_{
|r_2|\le 1}(\epsilon,p)_4^m\,\psi(p^{-m}\epsilon)\,\psi(r_2^2p^{-m}\epsilon)\,dr_2\,d\epsilon.
\end{equation}
Writing $r_2=p^n\eta$ with $|\eta|=1$, the inner integration is equal to
\begin{equation}\label{finwh7}
\sum_{n=0}^\infty q^{-n}\int\limits_{|\epsilon|=1}\int\limits_{|\eta|=1}
(\epsilon,p)_4^m\,\psi(p^{-m}\epsilon)\,\psi(p^{2n-m}\eta^2\epsilon)\,d\eta \,d\epsilon.
\end{equation}
If $n\ge 1$, we notice that $\psi(p^{-m}\epsilon)\psi(p^{2n-m}\eta^2\epsilon)=
\psi(p^{-m}\epsilon(1+p^{2n}\eta^2))$. Since, for $n\ge 1$, $1+p^{2n}\eta^2$ is a unit which is
a fourth power, we can make a change of variables in $\epsilon$ to remove this factor. Therefore, summing
over $n\ge 1$ we obtain the contribution
$$\sum_{n=1}^\infty q^{-n}(1-q^{-1})\int\limits_{|\epsilon|=1}(\epsilon,p)_4^m\,\psi(p^{-m}\epsilon)\,d\epsilon
=
\begin{cases}q^{-3/2}G_1^{(4)}(1,p)&\text{if $m=1$}\\0&\text{if $m>1.$}
\end{cases}$$
Here the inner integral vanishes if $m>1$ due to the oscillation in $\psi$.
Thus the contribution to the integral \eqref{finwh6} from the summands with $n\geq1$ is
$$|a|\,q^{-1/2}G_1^{(4)}(1,p)\,
\tau_{{\Theta^{(4)}_{GL_3}},\text{fin}}\begin{pmatrix} a&& \\&\iota(p)&\\ &&\iota(p)\end{pmatrix}
\,\widetilde{L}_\Phi(e).$$
Finally, we consider the contribution to the sum \eqref{finwh7} from the term $n=0$. It is equal to
$$\int\limits_{|\epsilon|=1}\int\limits_{|\eta|=1}
(\epsilon,p)_4^m\,\psi(p^{-m}\epsilon)\,\psi(p^{-m}\eta^2\epsilon)\,d\eta \,d\epsilon.$$
We use the identity
$$\int\limits_{|\eta|=1}\psi(p^{-m}\epsilon\eta^2)\,d\eta=\int_{|\eta|=1}\psi(p^{-m}\epsilon\eta)\,(1+(\eta,p)_4^2)\,d\eta.$$
Plugging this into the above integral and changing variables $\eta\mapsto \eta\epsilon^{-1}$, we obtain
$$\int\limits_{|\epsilon|=1}(\epsilon,p)_4^m\,\psi(p^{-m}\epsilon)\,d\epsilon\int\limits_{|\eta|=1}\psi(p^{-m}\eta)\,d\eta+
\int\limits_{|\epsilon|=1}(\epsilon,p)_4^{3m}\,\psi(p^{-m}\epsilon)\,d\epsilon\int\limits_{|\eta|=1}(\eta,p)_4^2\,\psi(p^{-m}\eta)
\,d\eta.$$
If $m>1$ this expression is zero, and for $m=1$ we obtain
$-q^{-3/2}\,G_1^{(4)}(1,p)+q^{-1}\,G_3^{(4)}(1,p).$ Therefore the contribution to \eqref{finwh6} is
$$\left(G_3^{(4)}(1,p)\,\tau_{{\Theta^{(4)}_{GL_3}},\text{fin}}\begin{pmatrix} a&& \\&\iota(p)&\\ &&\iota(p)\end{pmatrix}-
q^{-1/2}\,G_1^{(4)}(1,p)\,\tau_{{\Theta^{(4)}_{GL_3}},\text{fin}}\begin{pmatrix} a&& \\&\iota(p)&\\ &&\iota(p)\end{pmatrix}
\right)\widetilde{L}_\Phi(e).$$

Combining all terms we obtain the following Proposition.
\begin{proposition}\label{prop62}
Suppose that $\nu$ is an unramified place.
There is a vector in $\sigma^{(4)}_{SO_4}$ with $\tau_{{\sigma^{(4)}_{SO_4}},\text{fin}}(e)\ne0$
and a vector in ${\Theta^{(4)}_{GL_3}}$
such that for all finite ideles $a$ which are units outside of $\nu$ and a positive power of the local uniformizer
$p_\nu$ at $\nu$, the following equality of Whittaker coefficients holds:\begin{multline*}\label{finwh8}
\tau_{{\sigma^{(4)}_{SO_4}},\text{fin}}\begin{pmatrix} a&&& \\&1&&\\ &&1&\\ &&&a^{-1}\end{pmatrix}= \\ \left(
\tau_{{\Theta^{(4)}_{GL_3}},\text{fin}}\begin{pmatrix} a&& \\&1&\\ &&1\end{pmatrix}+G_3^{(4)}(1,p_\nu)\,
\tau_{{\Theta^{(4)}_{GL_3}},\text{fin}}\begin{pmatrix} a&& \\&\iota(p_\nu)&\\ &&\iota(p_\nu)\end{pmatrix} \right)
\tau_{{\sigma^{(4)}_{SO_4}},\text{fin}}(e).
\end{multline*}
\end{proposition}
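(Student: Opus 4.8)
The plan is to assemble the local computation already performed in this section and then verify that the unwanted terms cancel. By Lemma~\ref{newlemma2} the finite part of the Whittaker coefficient \eqref{whso720} equals the integral \eqref{factor3}; disposing of the $x$-variable and descending to the single unramified place $\nu$ (exactly as in the $Sp_4$ case) reduces the problem to evaluating
\begin{equation*}
|a|_\nu^{-1}\int\limits_{F_\nu^2}\widetilde{L}_\Phi(z(r_1,r_2)\,t(a,1,1))\,\psi_\nu(r_1)\,dr_1\,dr_2,
\end{equation*}
where $\widetilde{L}_\Phi$ is the translated functional satisfying the $\widetilde{T}^0$-equivariance \eqref{factor4} in terms of the $GL_3$ theta Whittaker function. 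First I would split the $r_1$-integration into $|r_1|\le1$ and $|r_1|>1$ and treat each range separately.

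For $|r_1|\le1$ the right $K_\nu$-invariance of $\widetilde{L}_\Phi$ removes the $r_1$ dependence, and conjugating the unipotent element $y(m)=I_7+me_{2,4}-\tfrac{m^2}{2}e_{2,6}$ forces $|r_2|\le1$ as well; evaluating by \eqref{factor4} then yields the first term, proportional to $\tau_{\Theta^{(4)}_{GL_3},\text{fin}}(\text{diag}(a,1,1))$. For $|r_1|>1$ I would factor $z(r_1,r_2)=z(0,r_2)z(r_1,0)$, perform the Iwasawa decomposition of $z(r_1,0)$ as in \eqref{iwas}, record the fourth-order Hilbert symbol $(\epsilon,p)_4^m$ contributed by the two-cocycle when $r_1=p^{-m}\epsilon$, and arrive at the expression \eqref{finwh6}. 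The inner double integral \eqref{finwh7} over $\epsilon$ and $\eta$ (writing $r_2=p^n\eta$) is then evaluated by isolating the summands $n\ge1$ from the summand $n=0$, using in each case that the $\epsilon$-integral vanishes by oscillation unless $m=1$.

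The delicate step, and the one I expect to be the main obstacle, is the $\eta$-integration in the $n=0$ summand, where the quadratic phase $\psi(p^{-m}\eta^2\epsilon)$ appears. Here one rewrites $\int_{|\eta|=1}\psi(c\eta^2)\,d\eta=\int_{|\eta|=1}\psi(c\eta)(1+(\eta,p)_4^2)\,d\eta$, so that the quadratic twist $(\eta,p)_4^2$, after the change of variables $\eta\mapsto\eta\epsilon^{-1}$, shifts the $\epsilon$-character from $(\epsilon,p)_4^m$ to $(\epsilon,p)_4^{m-2}$; for the surviving term $m=1$ this is $(\epsilon,p)_4^{-1}=(\epsilon,p)_4^3$, which is precisely what turns a quadratic Gauss sum into the \emph{third}-order Gauss sum $G_3^{(4)}(1,p)$. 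Tracking these fourth-order symbols consistently through the squaring identity is the heart of the argument and the point where an error would be easiest to make.

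Finally I would combine the three contributions. The summands with $n\ge1$ supply a coefficient $+q^{-1/2}G_1^{(4)}(1,p)$, while the $n=0$ summand supplies $G_3^{(4)}(1,p)-q^{-1/2}G_1^{(4)}(1,p)$, each multiplying $\tau_{\Theta^{(4)}_{GL_3},\text{fin}}(\text{diag}(a,\iota(p),\iota(p)))\,\widetilde{L}_\Phi(e)$; the two $q^{-1/2}G_1^{(4)}(1,p)$ terms cancel, leaving the clean coefficient $G_3^{(4)}(1,p)$. Passing from $W$ to the normalized coefficient $\tau=\delta_B^{-1/2}W$ absorbs the remaining powers of $|a|$, and identifying $\widetilde{L}_\Phi(e)$ with a multiple of $\tau_{\sigma^{(4)}_{SO_4},\text{fin}}(e)$—which is nonzero by Lemma~\ref{nonzeroso7}—produces the stated identity together with its normalization $\tau_{\sigma^{(4)}_{SO_4},\text{fin}}(e)\ne0$.
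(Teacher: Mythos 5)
Your proposal is correct and follows the paper's own proof essentially step for step: the same reduction via Lemma~\ref{newlemma2} and removal of the $x$-integration, the same split of the $r_1$-domain with the matrix $y(m)$ truncating $r_2$, the same Iwasawa decomposition producing the cocycle factor $(\epsilon,p)_4^m$, the expansion $r_2=p^n\eta$ with the fourth-power trick for $n\ge1$, the squaring identity $\int_{|\eta|=1}\psi(c\eta^2)\,d\eta=\int_{|\eta|=1}\psi(c\eta)\bigl(1+(\eta,p)_4^2\bigr)\,d\eta$ for $n=0$, and the cancellation of the two $q^{-1/2}G_1^{(4)}(1,p)$ contributions leaving $G_3^{(4)}(1,p)$. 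Your shifted exponent $(\epsilon,p)_4^{m-2}$ is the literal outcome of the change of variables (the paper writes $(\epsilon,p)_4^{3m}$, which agrees modulo $4$ for odd $m$), and since only $m=1$ survives the oscillation this discrepancy is immaterial.
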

\noindent Here, as in Prop.~\ref{prop61}, the coefficients $\tau_{{\sigma^{(4)}_{SO_4}},\text{fin}}$ 
and $\tau_{{\Theta^{(4)}_{GL_3}},\text{fin}}$
depend on choices of vector which we have suppressed from the notation, and if we choose
the vector in $\sigma^{(4)}_{SO_4}$ so that $\tau_{{\sigma^{(4)}_{SO_4}},\text{fin}}(e)=1$ then 
the corresponding vector in ${\Theta^{(4)}_{GL_3}}$ satisfies $\tau_{{\Theta^{(4)}_{GL_3}},\text{fin}}(e)=1.$

Props.~\ref{prop61} and \ref{prop62} easily extend to $a$ which are units outside of $\nu$
and powers of $p_\nu$ times units at $\nu$ by $K_\nu$-invariance, taking into account the relevant cocycles.
We shall extend them further below.

\section{Descent Integrals and the CFH Conjecture}

The goal of this section is to prove a result related to Conjecture 1 of \cite{C-F-H}, which describes the
square coefficients of the theta function on the 6-fold cover of $GL_2$.  We
shall show that one of the descent integrals considered above gives a function in $L^2(SL_2^{(6)}(F)\backslash SL_2^{(6)}(\mathbb A))$
whose Fourier coefficients are exactly consistent with the conjecture,
at least at coefficients which are products
of sufficiently nice primes.  This strongly suggests that the Conjecture there is true, perhaps up to some factors
at bad places, and moreover gives support for Conjecture~\ref{conjecture1} above.

We begin with a factorizable vector $\theta$ such that
$\tau_{\sigma_{SL_2}^{(6)},fin}(e)\neq0$.
After possibly multiplying by a constant we suppose that
$\tau_{\sigma_{SL_2}^{(6)},fin}(e)=1$.  
We then seek to study the integral
\begin{equation}\label{cfh1}
W_{\sigma_{SL_2}^{(6)},fin}\begin{pmatrix} a&\\
&a^{-1}\end{pmatrix}= |a|_\Phi^{-1/2}\gamma(a)\int\limits_{{\mathbb
A}_\Phi}L_\Phi(t(a,1)z(r))\,\psi(r)\,dr
\end{equation}

for certain values of $a\in {\mathbb A}_\Phi^\times$. Indeed, starting with the integral \eqref{factor1},
we have chosen the Schwartz function $\phi$ as was specified near the beginning of
Section~\ref{section6}, and also absorbed the Weyl element $w_2w_1$ into the choice of vector.
As explained in Section~\ref{section5},
the function $L_\Phi$ is defined via a functional applied $\theta$.
Arguing as in Section~\ref{section6}, we choose data such that the integral \eqref{cfh1} is not zero for $a=1$.

Let  $S$ be a finite set of places including all bad finite places for the representation
$\Theta_{Sp_4}^{(3)}$, all finite places for which $\psi$ is not of full conductor, and such that
outside of $S$ the vector $\theta$ is the unramified vector.
We note that we do not have information about the set $S$, which one would like to make as small as possible.
It follows from the proof of
Prop.~\ref{prop61} that
\begin{equation}\label{cfh2}
W_{\sigma_{SL_2}^{(6)},fin}(e)=\int\limits_{F_S}L_\Phi(z(r))\,\psi_S(r)\,dr.
\end{equation}
Here $F_S$ is the product of $F_\nu$ over all $\nu\in S$, and $\psi_S=\prod_{\nu\in S}\psi_\nu$.

As explained above, there is a compact open subgroup $K_\theta=\Pi_{\nu\in \Phi}K_{\nu}$ which
fixes the vector $\theta$, with $K_\nu=Sp_4(O_\nu)$ for $\nu\in\Phi$, $\nu\not\in S$.  Next we specify a set of elements $R\subset {\mathbb A}_\Phi^\times$ defined as follows. Let $\nu$ denote
a place outside of $S$, and let $p\in F$ (or $p_\nu$ if it is necessary to show the place $\nu$ explicitly) denote a generator of the maximal ideal in
$O_\nu$ such that the diagonal matrices $t(p^k,p^l)\in K_{\nu'}$ for all $k,l\geq0, \nu'\in \Phi$, $\nu'\neq\nu$.  This will be true
provided  $|p|_{\nu'}=1$ for all $\nu'\ne\nu$ and $p$ is sufficiently close to 1 at the places $\nu'\in S$.
By Dirichlet's theorem, there are infinitely many such $p$. Given such a $p$,
let $a_\nu=(p,p,\ldots)\in {\mathbb A}_\Phi^\times$.  Also let $|a_\nu|=\# O_\nu/pO_\nu$.
We then define $R$ to be the multiplicative set generated by the elements $a_\nu$ as $\nu$ varies
over places outside of $S$, and extend $|\cdot |$ multiplicatively to $R$.
Notice that if $a,b\in R$ then also $a^kb^l\in R$ for all non-negative integers $k$ and $l$. It
is convenient to denote such an element $a_\nu$ by $\underline{p}$ when the place $\nu$ is clear.
Recall also that $\iota(p)$ denotes
the element in ${{\mathbb A}}_\Phi^\times$ which is $p$ at the $\nu$ component and
1 elsewhere. With these choices, we have $L_\Phi(t(\underline{p}^k,\underline{p}^l))=
L_\Phi(t(\iota(p^k),\iota(p^l)))$ for all $k,l\geq0$.  We emphasize that
the set $R$ depends on the stabilizer of the vector $\theta$ that is chosen above.

The main result in this section is the following.

\begin{theorem}\label{maincfh}
Let $\tau_{\sigma_{SL_2}^{(6)},fin}$ be the finite part of the
normalized Whittaker coefficient attached to a vector $\theta\in \sigma_{SL_2}^{(6)}$ as specified above.
\begin{enumerate} 
\item Let $m_1,m_2\in R$.  Then 
\begin{equation*} 
\tau_{\sigma_{SL_2}^{(6)},fin}
\begin{pmatrix} \underline{m_1}\,\underline{m_2^3}&\\ &\underline{m_1}^{-1}\,\underline{m_2}^{-3}\end{pmatrix}=
|m_2|^{1/2} \tau_{\sigma_{SL_2}^{(6)},fin}\begin{pmatrix} \underline{m_1}&\\ &\underline{m_1}^{-1}\end{pmatrix}.
\end{equation*}
\item
Let $m_1,m_2\in R$ be square-free and coprime.   Then
\begin{equation}\label{cfh-conjecture}
\tau_{\sigma_{SL_2}^{(6)},fin}
\begin{pmatrix} \underline{m_1}\,\underline{m_2^2}&\\ &\underline{m_1}^{-1}\,\underline{m_2}^{-2}\end{pmatrix}=
\gamma(\underline{m_1})\,
G_1^{(3)}(1,m_1)\,G_1^{(3)}(1,m_2)^2\left(\frac{m_2}{m_1}\right)_3^2 \sum_{\substack{d_1,d_2\in R\\m_1=d_1d_2}}\left(\frac{d_2}{d_1}\right)_3.
\end{equation}
\end{enumerate}
\end{theorem}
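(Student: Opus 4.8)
The plan is to derive both parts from the single-place recursion of Proposition~\ref{prop61} (and its multiplicative extension announced after Proposition~\ref{prop62}), reducing everything to the known arithmetic of the cubic theta Whittaker coefficients $\tau_{\Theta_{GL_2}^{(3)},\text{fin}}$. First I would record the local inputs I need at a good place $\nu$. Since the cocycle $\sigma_{Sp_4}$ restricts on the Levi $GL_2$ to the \emph{conjugate} cocycle $\sigma_{3,1}$, the relevant value is $\tau_{\Theta_{GL_2}^{(3)},\text{fin}}(\text{diag}(p,1))=G_2^{(3)}(1,p)=\overline{G_1^{(3)}(1,p)}$, so that the conjugated coefficient $\overline{\tau}_{\Theta_{GL_2}^{(3)},\text{fin}}(\text{diag}(p,1))=G_1^{(3)}(1,p)$ appears in Proposition~\ref{prop61}; moreover $\tau_{\Theta_{GL_2}^{(3)},\text{fin}}(\text{diag}(p^2,1))=0$, the coefficient is cube-periodic (replacing the argument by $p^3$ scales by $|p|^{1/2}$), and across coprime places it is multiplicative with the reciprocity twists of \eqref{gauss9}. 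These facts follow from Patterson's determination of the cubic theta together with the Hecke relations and the twisted multiplicativity of Gauss sums. I would also use $GL_2$-equivariance to evaluate the shifted coefficient $\overline{\tau}_{\Theta_{GL_2}^{(3)},\text{fin}}(\text{diag}(a,\iota(p)))$ appearing in the second term of Proposition~\ref{prop61}, by writing $\text{diag}(a,\iota(p))=\text{diag}(a\iota(p)^{-1},1)\cdot(\iota(p)I_2)$ and applying the central character, which is trivial on the $\iota(p)$ occurring for the $p\in R$ at hand.

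Next I would extend Proposition~\ref{prop61} multiplicatively over the places dividing $a$: iterating the recursion one place at a time produces, for $a=\prod_i\underline{p_i}^{k_i}$, a product over $i$ of the two-term factor $\overline{\tau}_\Theta(\cdots,1)+G_1^{(3)}(1,p_i)\,\overline{\tau}_\Theta(\cdots,\iota(p_i))$, with the cross-place interactions governed by the cocycle factors of \eqref{gauss9} and by the central-character evaluations in the $\iota(p_i)$-shifts. This is where the two stated structures emerge, the key point being that each prime $p\mid m_1$ contributes a binary choice between its two terms, which correspond to the two ways of placing $p$ into $d_1$ or $d_2$. For part (1), plugging in the cube $\underline{m_2^3}$, at each prime $p\mid m_2$ the ``$\iota(p)$'' term is forced to vanish (it lands on $\tau_\Theta(\text{diag}(p^2,1))=0$ after the central-character step), while the ``$1$'' term contributes the cube-periodicity factor $|p|^{1/2}$; since cubes are cubic residues, all symbols $(\cdot/p^3)_3$ are trivial, and the Weil and quadratic Hilbert-symbol factors attached to the cube collapse (using that $-1$ is a cube and the $(p,-p)_3=1$ computation already invoked in Section~\ref{section6}, so that $\gamma(\underline{m_1}\,\underline{m_2^3})=\gamma(\underline{m_1})$). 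Multiplying over $p\mid m_2$ then yields exactly the factor $|m_2|^{1/2}$, proving part (1).

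For part (2), at each prime $p\mid m_2$ the same mechanism kills the ``$1$'' term and leaves the ``$\iota(p)$'' term, which by the central-character evaluation equals $G_1^{(3)}(1,p)\cdot G_1^{(3)}(1,p)=G_1^{(3)}(1,p)^2$; multiplying over $p\mid m_2$ and using \eqref{gauss9} gives $G_1^{(3)}(1,m_2)^2$, while the cross symbols between the $m_2$- and $m_1$-places produce the reciprocity factor $(m_2/m_1)_3^2$ (the square reflecting the exponent two). At each prime $p\mid m_1$ both terms survive and are each proportional to $G_1^{(3)}(1,p)$, differing only by cubic-residue factors produced through the inter-place reciprocity of \eqref{gauss9} and the central shifts. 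After factoring out $G_1^{(3)}(1,m_1)=\prod_{p\mid m_1}G_1^{(3)}(1,p)$ via \eqref{gauss9}, the sum over the $2^{\#\{p\mid m_1\}}$ choices reorganizes precisely into the symmetric divisor sum $\sum_{m_1=d_1d_2}(d_2/d_1)_3$; as a sanity check, for $m_1=p$ the two terms sum to $2\,G_1^{(3)}(1,p)$, matching the divisor-sum value $(p/1)_3+(1/p)_3=2$. Assembling all contributions, together with the overall Weil factor $\gamma(\underline{m_1})$ from Proposition~\ref{prop61} (the square $\underline{m_2^2}$ contributing trivially to $\gamma$), gives \eqref{cfh-conjecture}.

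The hard part will be the bookkeeping of cubic residue symbols in the multiplicative extension: one must verify that the cross-place cocycle factors from \eqref{gauss9}, the central-character values at the $\iota(p)$-shifts, and the conjugate Gauss sums $G_2^{(3)}=\overline{G_1^{(3)}}$ at the $m_1$-primes combine with no residual phase into exactly the symmetric divisor sum $\sum_{m_1=d_1d_2}(d_2/d_1)_3$ and the clean factor $(m_2/m_1)_3^2$. Controlling these reciprocity factors, rather than any analytic difficulty, is the crux of the argument.
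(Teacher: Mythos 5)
Your proposal is correct and follows essentially the same route as the paper: the paper likewise extends Prop.~\ref{prop61} to several places (though by decomposing the domain of the $r$-integration in \eqref{int50} into the $2^k$ regions $|r_{\nu_k}|\le 1$, $|r_{\nu_k}|>1$, rather than by formally iterating the one-place recursion, which as stated only applies to $a$ supported at a single place), evaluates $\overline{\tau}_{\Theta_{GL_2}^{(3)},fin}$ at $\text{diag}(\underline{p},1)$, $\text{diag}(\underline{p}^2,1)$ as $G_1^{(3)}(1,p)$, $0$ via Hecke operators, and assembles the divisor sum and the factor $\left(\frac{m_2}{m_1}\right)_3^2$ through exactly the cocycle and reciprocity bookkeeping of \eqref{gauss9} that you identify as the crux. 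The one step needing repair is your evaluation of the shifted coefficient $\overline{\tau}_{\Theta_{GL_2}^{(3)},fin}\left(\begin{smallmatrix}a&\\ &\iota(p)\end{smallmatrix}\right)$ by ``applying the central character, which is trivial on $\iota(p)$'': the purely local scalar $\iota(p)I_2$ is not central in $GL_2^{(3)}(F_\nu)$ (its commutator with torus elements produces cubic Hilbert symbols), so the paper instead uses $K_\theta$-invariance to replace $\iota(p)$ by $\underline{p}$ and then left-invariance under the rational scalar $\underline{p}I_2$ (automorphy plus triviality of the cover at the archimedean places) --- this substitution is precisely the source of the cross-place factors $(p,q)_{\nu_1}(q,p)_{\nu_2}$ that your final paragraph correctly anticipates.
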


The first part of the Theorem shows that the descent function 
satisfies the same periodicity property as that satisfied by the theta function on the 6-fold cover of $GL_2$, 
as demonstrated by Kazhdan and Patterson
\cite{K-P}, Section 1.4; see also Hoffstein \cite{Ho}, Prop.\ 5.1.  As for the second part, 
the right hand side of \eqref{cfh-conjecture} is essentially given in Conjecture 1 of \cite{C-F-H}.  However, note that the left hand side
depends on a choice of a specific vector and is valid for products of primes that are sufficiently close to $1$ at the places in $S$.
The quantities also depend on the choice
of the additive character $\psi$.

\begin{proof}
The Theorem is a direct consequence of identity \eqref{finwh5} in Prop.~\ref{prop61}. To explain this succinctly we
carry out the details for $m_1,m_2$ products of small
numbers of primes; the general case follows by exactly the same arguments.
Let $\underline{p}$ and
$\underline{q}$ be two elements in $R$ corresponding to the distinct places $\nu_1$ and $\nu_2$, resp.,
and write $\iota(p$), $\iota(q)$ for the finite ideles which are $1$ outside of places $\nu_1$, $\nu_2$ resp.\
where they are $p$, $q$  resp.  Recall that $\tau_{\sigma_{SL_2}^{(6)},fin}(e)=1$.
Arguing
as in the proof of Prop.~\ref{prop61} we obtain the identities

\begin{equation}\label{cfh3}
\tau_{\sigma_{SL_2}^{(6)},fin}\begin{pmatrix} \underline{p}^i&\\
&\underline{p}^{-i}\end{pmatrix}= \gamma\big(\underline{p}^i\big)\left
(\overline{\tau}_{\Theta_{GL_2}^{(3)},fin}\begin{pmatrix}
\underline{p}^i&\\ &1\end{pmatrix}+G_1^{(3)}(1,p)
\overline{\tau}_{\Theta_{GL_2}^{(3)},fin}\begin{pmatrix}
\underline{p}^i&\\ &\iota(p)\end{pmatrix}\right )
\end{equation}
and

\begin{align}\label{cfh4}
\tau_{\sigma_{SL_2}^{(6)},fin}\begin{pmatrix}
\underline{p}^i\underline{q}^j&\\
&\underline{p}^{-i}\underline{q}^{-j}\end{pmatrix}&=
\gamma\big(\underline{p}^i\underline{q}^j\big)\Bigg(\overline{\tau}_{\Theta_{GL_2}^{(3)},fin}\begin{pmatrix}
\underline{p}^i\underline{q}^j&\\ &1\end{pmatrix}\\
\notag &\qquad +G_1^{(3)}(1,p)
\overline{\tau}_{\Theta_{GL_2}^{(3)},fin}\left (\begin{pmatrix}
\underline{p}^i\underline{q}^j&\\ &1\end{pmatrix}
\begin{pmatrix} 1&\\ &\iota(p)\end{pmatrix}\right )\\ \notag
&\qquad +G_1^{(3)}(1,q)
\overline{\tau}_{\Theta_{GL_2}^{(3)},fin}\left (\begin{pmatrix} \underline{p}^i\underline{q}^j&\\ &1\end{pmatrix}
\begin{pmatrix} 1&\\ &\iota(q)\end{pmatrix}\right )\\ \notag
&\qquad+G_1^{(3)}(1,p)G_1^{(3)}(1,q)\overline{\tau}_{\Theta_{GL_2}^{(3)},fin}
\left (\begin{pmatrix}\underline{p}^i \underline{q}^j&\\
&1\end{pmatrix}\begin{pmatrix}1&\\ &\iota(p)\iota(q)\end{pmatrix}\right )\Bigg).
\end{align}
Indeed, to prove
\eqref{cfh4} one argues as in the proof of Prop.~\ref{prop61}
starting from Eqn.~\eqref{finwh4}. The integral in that equation,
adapted to our case, is given by
\begin{equation}\label{int50}
\int\limits_{F_{\nu_1}}\int\limits_{F_{\nu_2}}\widetilde{L}_\Phi(t(\underline{p}^i\underline{q}^j,1)z(r_{\nu_1})z(r_{\nu_2}))\,
\psi_{\nu_1}(r_{\nu_1})\,\psi_{\nu_2}(r_{\nu_2})\,dr_{\nu_1}\,dr_{\nu_2},
\end{equation}
where $p$ (resp.\ $q$) corresponds to place $\nu_1$ (resp.\ $\nu_2$).
To obtain \eqref{cfh4} one breaks the domain of integration in \eqref{int50} into four pieces corresponding to
$|r_{\nu_k}|\leq1$, $ |r_{\nu_k}|>1$ for $k=1,2$. Each of the four summands in equation \eqref{cfh4}
is obtained from one of the four pieces.    Eqn.~\eqref{cfh3} is similar.

To simplify \eqref{cfh3}, we may use $K_\theta$-invariance to write
$$\overline{\tau}_{\Theta_{GL_2}^{(3)},fin}\begin{pmatrix} \underline{p}^i&\\ &\iota(p)\end{pmatrix}=
\overline{\tau}_{\Theta_{GL_2}^{(3)},fin}\begin{pmatrix} \underline{p}^i&\\ &\underline{p}\end{pmatrix}.$$
Then since the central character is trivial at all archimedean places and functions in $\Theta_{GL_2}^{(3)}$ are
automorphic, in particular left-invariant under $\underline{p}I_2$, this equals
$$\overline{\tau}_{\Theta_{GL_2}^{(3)},fin}\begin{pmatrix} \underline{p}^{i-1}&\\ &1\end{pmatrix}.$$
Thus
\begin{equation}\label{cfh3-simplified}
\tau_{\sigma_{SL_2}^{(6)},fin}\begin{pmatrix} \underline{p}^i&\\ &\underline{p}^{-i}\end{pmatrix}=
\overline{\tau}_{\Theta_{GL_2}^{(3)},fin}\begin{pmatrix} \underline{p}^i&\\ &1\end{pmatrix}+G_1^{(3)}(1,p)
\overline{\tau}_{\Theta_{GL_2}^{(3)},fin}\begin{pmatrix} \underline{p}^{i-1}&\\ &1\end{pmatrix}.
\end{equation}

To simplify  \eqref{cfh4} we recall that the cocycle in
 the group $GL_2^{(3)}$ as induced from the cocycle on $Sp_4^{(3)}$ is given by the conjugate of
$$\sigma_{3,1}\left (\begin{pmatrix} a&\\ &b\end{pmatrix},\begin{pmatrix} c&\\ &d\end{pmatrix}\right )=
(ab,cd)_3(a,d)_3.$$  Thus $\overline{\tau}_{\Theta_{GL_2}^{(2)},fin}$  transforms by the adelic version of $\sigma_{3,1}$.
All Hilbert symbols in this Section are cubic;  since we will be concerned with elements of $F$ embedded in
different completions $F_\nu$, we shall sometimes write the Hilbert symbol in $F_\nu$ as $(~,~)_\nu$.

Using the right invariant property of the function $\overline{\tau}_{\Theta_{GL_2}^{(3)},fin}$ under $K_{\theta}$, we obtain
\begin{equation*}
\overline{\tau}_{\Theta_{GL_2}^{(3)},fin}\left (\begin{pmatrix} \underline{p}^i\underline{q}^j&\\ &1\end{pmatrix}
\begin{pmatrix} 1&\\ &\iota(p)\end{pmatrix}\right )=
\overline{\tau}_{\Theta_{GL_2}^{(3)},fin}\left (\begin{pmatrix} \underline{p}^i\underline{q}^j&\\ &1\end{pmatrix}
\begin{pmatrix} 1&\\ &\underline{p}\end{pmatrix}\right ).
\end{equation*}
Since we have $(\underline{p},\underline{p})=(\underline{q},\underline{p})=1$ by Hilbert reciprocity, we
may factor
$$\begin{pmatrix} \underline{p}^i\underline{q}^j&\\ &1\end{pmatrix}\begin{pmatrix} 1&\\ &\underline{p}\end{pmatrix}=\begin{pmatrix} \underline{p}&\\ &\underline{p}\end{pmatrix}
\begin{pmatrix} \underline{p}^{i-1}\underline{q}^j&\\ &1\end{pmatrix}$$
in the adelic metaplectic group (recall that the matrices are embedded via the trivial section $\mathbf{s}$).
Then we see that
$$\overline{\tau}_{\Theta_{GL_2}^{(3)},fin}\left (\begin{pmatrix} \underline{p}&\\ &\underline{p}\end{pmatrix}
\begin{pmatrix} \underline{p}^{i-1}\underline{q}^j&\\ &1\end{pmatrix}\right )
=
\overline{\tau}_{\Theta_{GL_2}^{(3)},fin}
\begin{pmatrix} \underline{p}^{i-1}\underline{q}^j&\\ &1\end{pmatrix},
$$
again using the left-invariance of functions in $\Theta_{GL_2}^{(3)}$ under $\underline{p}I_2$.

Similarly
$$\overline{\tau}_{\Theta_{GL_2}^{(3)},fin}\left (\begin{pmatrix} \underline{p}^i\underline{q}^j&\\ &1\end{pmatrix}
\begin{pmatrix} 1&\\ &\iota(q)\end{pmatrix}\right )=\overline{\tau}_{\Theta_{GL_2}^{(3)},fin}
\begin{pmatrix} \underline{p}^{i}\underline{q}^{j-1}&\\ &1\end{pmatrix}.$$

Finally, to analyze the last term in \eqref{cfh4} (where it turns out some care is needed due to the cocycle), we begin
by observing
$$\overline{\tau}_{\Theta_{GL_2}^{(3)},fin}
\left (\begin{pmatrix}\underline{p}^i \underline{q}^j&\\ &1\end{pmatrix}\begin{pmatrix}1&\\ &\iota(p)\iota(q)\end{pmatrix}\right )=
\overline{\tau}_{\Theta_{GL_2}^{(3)},fin}
\left (\begin{pmatrix}\underline{p}^i \underline{q}^j&\\ &1\end{pmatrix}\begin{pmatrix}1&\\ &\iota(p)\iota(q)\end{pmatrix}
\begin{pmatrix} 1&\\ &\iota(q)^{-1}\underline{q}\end{pmatrix}\right )$$
by the right invariance under $K_\theta$. Carrying out a cocycle
computation we obtain that the rightmost term in the above equality
is
$$(p,q)_{\nu_1} \overline{\tau}_{\Theta_{GL_2}^{(3)},fin}
\left (\begin{pmatrix}\underline{p}^i \underline{q}^j&\\ &1\end{pmatrix}\begin{pmatrix}1&\\ &\iota(p)\underline{q}\end{pmatrix}\right ).$$
In a similar way, using the right invariance by
$\left(\begin{smallmatrix} 1&\\ &\iota(p)^{-1}\underline{p}\end{smallmatrix}\right)\in K_\theta$ we find that the above expression is equal to
$$(p,q)_{\nu_1}(q,p)_{\nu_2} \overline{\tau}_{\Theta_{GL_2}^{(3)},fin}
\left (\begin{pmatrix}\underline{p}^i \underline{q}^j&\\ &1\end{pmatrix}\begin{pmatrix}1&\\ &\underline{p}\underline{q}\end{pmatrix}\right ).$$
Arguing as above, we conclude that
$$\overline{\tau}_{\Theta_{GL_2}^{(3)},fin}
\left (\begin{pmatrix}\underline{p}^i \underline{q}^j&\\ &1\end{pmatrix}\begin{pmatrix}1&\\ &\iota(p)\iota(q)\end{pmatrix}\right )=
(p,q)_{\nu_1}(q,p)_{\nu_2} \overline{\tau}_{\Theta_{GL_2}^{(3)},fin}
\begin{pmatrix}\underline{p}^{i-1} \underline{q}^{j-1}&\\ &1\end{pmatrix}.$$

Combining all this, equation \eqref{cfh4} can be written as
\begin{equation}\label{cfh41}
\tau_{\sigma_{SL_2}^{(6)},fin}\begin{pmatrix} \underline{p}^i\underline{q}^j&\\ &\underline{p}^{-i}\underline{q}^{-j}\end{pmatrix}=\gamma\big(\underline{p}^i\underline{q}^j\big)\Bigg(\overline{\tau}_{\Theta_{GL_2}^{(3)},fin}
\begin{pmatrix} \underline{p}^{i}\underline{q}^j&\\ &1\end{pmatrix}
+G_1^{(3)}(1,p)\overline{\tau}_{\Theta_{GL_2}^{(3)},fin}
\begin{pmatrix} \underline{p}^{i-1}\underline{q}^j&\\ &1\end{pmatrix}
\end{equation}
$$G_1^{(3)}(1,q)\overline{\tau}_{\Theta_{GL_2}^{(3)},fin}
\begin{pmatrix} \underline{p}^{i}\underline{q}^{j-1}&\\ &1\end{pmatrix}
+G_1^{(3)}(1,pq) \overline{\tau}_{\Theta_{GL_2}^{(3)},fin}
\begin{pmatrix}\underline{p}^{i-1} \underline{q}^{j-1}&\\ &1\end{pmatrix}\Bigg).$$
In the last term we used \eqref{gauss9}.

With this preparation, we may prove the Theorem.
The periodicity asserted in the first part follows immediately from \eqref{cfh3-simplified} and \eqref{cfh41}
together with the periodicity property for $\Theta^{(3)}_{GL_2}$. As for the second part, 
consider first the case of a single prime $p\in R$.
The identity
\begin{equation*}\label{main1}
\tau_{\sigma_{SL_2}^{(6)},fin}\begin{pmatrix} \underline{p}&\\
&\underline{p}^{-1}\end{pmatrix}= 2\,\gamma\big(\underline{p}\big)\,G_1^{(3)}(1,p)
\end{equation*}
follows directly from \eqref{cfh3} with $i=1$, since
$$\overline{\tau}_{\Theta_{GL_2}^{(3)},fin}\begin{pmatrix} \underline{p}&\\ &1\end{pmatrix}=
\overline{\tau}_{\Theta_{GL_2}^{(3)},fin}\begin{pmatrix} \iota(p)&\\ &1\end{pmatrix}=G_1^{(3)}(1,p).$$
This evaluation of the Whittaker coefficient of the cubic theta function here follows from a well-known
Hecke operator argument (Kazhdan-Patterson, \cite{K-P}, Sect.~1.4; Hoffstein \cite{H}, Prop.~5.3).

Similarly, \eqref{cfh3} with $i=2$ gives
\begin{equation*}\label{main2}
\tau_{\sigma_{SL_2}^{(6)},fin}\begin{pmatrix} \underline{p}^2&\\ &\underline{p}^{-2}\end{pmatrix}=
G_1^{(3)}(1,p)^2
\end{equation*}
since $\gamma(\underline{p}^2)=1$ and
$$
\overline{\tau}_{\Theta_{GL_2}^{(3)},fin}\begin{pmatrix} \underline{p}^2&\\ &1\end{pmatrix}=0,
$$
again via Hecke operators.  This evaluation of the coefficient $\tau_{\sigma_{SL_2}^{(6)},fin}$ is consistent
with the known properties of the Whittaker coefficient of the sextic theta function, which is again
obtained from Hecke operators.

The case of more than one prime is similar.
To prove
\begin{equation*}\label{main31}
\tau_{\sigma_{SL_2}^{(6)},fin}\begin{pmatrix} \underline{p}\,\underline{q}&\\
&\underline{p}^{-1}\underline{q}^{-1}\end{pmatrix}=
2\,\gamma\big(\underline{p}\,\underline{q}\big)\,\left(G_1^{(3)}(1,p)G_1^{(3)}(1,q)+G_1^{(3)}(1,pq)\right),
\end{equation*}
we use \eqref{cfh41} with $i=j=1$.  All terms have already been evaluated except for
$$\overline{\tau}_{\Theta_{GL_2}^{(3)},fin}\begin{pmatrix} \underline{p}\,\underline{q}&\\ &1\end{pmatrix}=\overline{\tau}_{\Theta_{GL_2}^{(3)},fin}\left (\begin{pmatrix} \iota(p)&\\ &1\end{pmatrix}\begin{pmatrix} \iota(p)^{-1}\underline{p}&\\ &1\end{pmatrix}\begin{pmatrix} \iota(q)&\\ &1\end{pmatrix}\begin{pmatrix} \iota(q)^{-1}\underline{q}&\\ &1\end{pmatrix}\right ).$$
Using invariance under $K_\theta$ and computing the cocycles, one sees that
$$\overline{\tau}_{\Theta_{GL_2}^{(3)},fin}\begin{pmatrix} \underline{p}\,\underline{q}&\\ &1\end{pmatrix}
=(p,q)_{\nu_2}\overline{\tau}_{\Theta_{GL_2}^{(3)},fin}\begin{pmatrix} \iota(p)\iota(q)&\\ &1\end{pmatrix}.
$$
This equals $G_1^{(3)}(1,pq)$, again via Hecke operators.

And to prove
\begin{equation*}
\tau_{\sigma_{SL_2}^{(6)},fin}\begin{pmatrix}
\underline{p}\,\underline{q^2}&\\
&\underline{p}^{-1}\underline{q}^{-2}\end{pmatrix}=
2\,\gamma(\underline{p})\,G_1^{(3)}(1,p)\,G_1^{(3)}(1,q)^2\left(\frac{q}{p}\right)^2_3,
\end{equation*}
(recall $\gamma(\underline{p})=\gamma(\underline{p}\,\underline{q}^2)$), we use $i=1,j=2$, recalling that
$$
\overline{\tau}_{\Theta_{GL_2}^{(3)},fin}\begin{pmatrix} \underline{p}\,\underline{q}^2&\\ &1\end{pmatrix}=0,
$$
and employing \eqref{gauss9}, the link between local Hilbert symbols and global power residue symbols,
and reciprocity.  (Note $(p,q)_v=1$ for all $v\in S$ since $p,q\in R$.)

This completes the proof of the theorem. \end{proof}

{\bf Remarks.}  {\bf 1.}  Let $Z$ denote the center of $GL_2$.
Using the Kubota cocycle for $GL_2$ as in \cite{K-P}, pg.\ 41, it is easy to see if $F$ is a local field
containing enough roots of unity then for a metaplectic cover of any degree,
$\widetilde{SL_2}(F)$ and $\widetilde{Z}(F)$ commute in $\widetilde{GL_2}(F)$.  Moreover for the double
cover $\widetilde{Z}^{(2)}$ is split with section $\gamma$.  Using this, the descent integral and Theorem~\ref{maincfh}
may easily be extended to $\widetilde{Z}\cdot\widetilde{SL_2}\subseteq GL_2^{(6)}$, giving 
 the formula predicted in \cite{C-F-H}, pg.\ 155 for $m_1,m_2\in R$.

{\bf 2.} The numerical work of \cite{Br-Ho} detects a sixth root of unity in the Whittaker coefficients of certain
vectors in the theta space which does not appear here; however
here $\tau_{\text{fin}}$ includes a contribution from the places in $S$ while the coefficients computed in \cite{Br-Ho}
do not.   Also, \cite{Br-Ho} consider specific vectors and the authors there note that some choices give cleaner results than others.
By contrast, our result holds when the vector $\theta$ is chosen so that the conditions above apply, and in particular \eqref{cfh2} is 1.  We
are not able to specify the exact vector satisfying this condition.  Also,  our result applies to primes satisfying certain 
congruence conditions.

\section{Descent Integrals and Patterson's Conjecture}

The situation in the $SO_7$ case is similar. Recall that for the quartic theta function, the coefficients satisfy
a periodicity property modulo fourth powers, the coefficients
at cubes of primes are zero and those at squares are known.  Patterson's conjecture concerns
the value at square-free arguments.  We have constructed a descent function on the 4-fold cover of
$SO_4$. Via
the identification of $D_2$ with $A_1\times A_1$, this function may be regarded as a function on a cover of $GL_2$.

More precisely, let
$$(GL_2\times GL_2)^0=\{ (g_1,g_2)\in GL_2\times GL_2\ :\ \text{det}
g_1=\text{det} g_2\}.$$
Then there is a natural map from $(GL_2\times GL_2)^0$ to $SO(4)$ as in \cite{Bump}, Ch.\ 30,
with kernel $Z$ consisting of scalar matrices in $GL_2$ embedded diagonally.
Under this map, the torus $\text{diag}
(a,1,1,a^{-1})\in SO_4$ may be identified with the torus consisting of pairs
$\left(\left(\begin{smallmatrix}a&\\&1\end{smallmatrix}\right),\left(\begin{smallmatrix}a&\\&1\end{smallmatrix}\right)\right)$
in $Z\backslash (GL_2\times GL_2)^0.$
This identification identifies the coefficients of the descent on the 4-fold cover of
$SO_4$ with {\it squares} of the corresponding coefficients of a function on the 4-fold cover of $GL_2$.

We take a specific vector $\theta$ similar to the one described above in the $Sp_4$ case.  In particular
we choose $\theta$ so that $\tau_{\sigma_{SO_4}^{(4)},fin}(e)=1$.  Let $S$ be a finite set of places such that
everything is unramified outside $S$, and define the set $R$
similarly to the $Sp_4$ case.   Let $m(a)=\text{diag}(a,1,1,a^{-1})\in SO_4$.  We have
\begin{theorem}\label{mainso7}
Let $\tau_{\sigma_{S0_4}^{(4)},fin}$ be the finite part of the
normalized Whittaker coefficient attached to the vector $\theta\in \sigma_{SO_4}^{(4)}$.
\begin{enumerate}
\item Let $a,b\in R$.  Then
$$\tau_{\sigma_{S0_4}^{(4)},fin}(m(\underline{a}\,\underline{b}^4))= |b|\,
\tau_{\sigma_{S0_4}^{(4)},fin}(m(\underline{a})).
$$
\item Let $a\in R$ be square-free.  Then
$$\tau_{\sigma_{S0_4}^{(4)},fin}(m(\underline{a}))=G_3^{(4)}(1,a)\sum_{\substack{d_1,d_2\in R\\d_1d_2=a}}\left(\frac{d_1}{d_2}\right)_2.
$$
\end{enumerate}
\end{theorem}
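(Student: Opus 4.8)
The plan is to follow the proof of Theorem~\ref{maincfh} line for line, replacing Proposition~\ref{prop61} by Proposition~\ref{prop62} and the arithmetic of $\Theta_{GL_2}^{(3)}$ by that of the distinguished quartic theta representation $\Theta_{GL_3}^{(4)}$ on the four-fold cover of $GL_3$. First I would promote Proposition~\ref{prop62} to a multi-prime identity, exactly as \eqref{cfh4} was deduced from \eqref{cfh3}. Starting from the local integral \eqref{factor3} with $a=\underline{p_1}^{i_1}\cdots\underline{p_k}^{i_k}$ supported on finitely many good places, one factors the unipotent integration across the places and, at each place, breaks the domain into the pieces $|r_1|\le 1$ and $|r_1|>1$, with the inner $r_2$ and $\eta$ integrations disposed of as in the proof of Proposition~\ref{prop62}. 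This writes $\tau_{\sigma_{SO_4}^{(4)},fin}(m(a))$ as a sum over subsets $T$ of the primes, the summand for $T$ being $\prod_{j\in T}G_3^{(4)}(1,p_j)$ times a coefficient $\tau_{\Theta_{GL_3}^{(4)},fin}$ evaluated at a torus element in which, for each $j\in T$, the lower block $\text{diag}(1,1)$ has been replaced by $\iota(p_j)$.

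Next I would simplify these $GL_3^{(4)}$ coefficients, just as in the passage following \eqref{cfh4}. Using $K_\theta$-invariance one replaces each $\iota(p_j)$ by $\underline{p_j}$; then the automorphy of $\Theta_{GL_3}^{(4)}$, i.e.\ left-invariance under the central elements $\underline{p_j}\,I_3$, moves each activated prime out of the lower block and lowers its exponent in the top-left entry by one. Each such move costs a product of local Hilbert symbols coming from the $GL_3^{(4)}$ cocycle $\sigma_4(g_1,g_2)\,(\det g_1,\det g_2)_8^{-1}$; tracking these and applying Hilbert reciprocity, as was done for the final term of \eqref{cfh4}, converts them into global power-residue symbols. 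This reduces every term to the single-variable coefficients $c_i(p):=\tau_{\Theta_{GL_3}^{(4)},fin}(\text{diag}(\underline{p}^i,1,1))$ and their square-free multi-prime analogues.

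The remaining input is the explicit arithmetic of $\Theta_{GL_3}^{(4)}$. Since this is a theta representation on a $(3+1)$-fold cover of $GL_3$ it has a unique Whittaker model by Kazhdan--Patterson \cite{K-P}, so $c_i(p)$ and its square-free generalizations are computable by the same Hecke-operator argument used for $\Theta_{GL_2}^{(3)}$ in the proof of Theorem~\ref{maincfh}. I would invoke: periodicity modulo fourth powers (which yields part (1) directly, including the normalization $|b|$); the normalization $c_0=1$; the value $c_1(p)=G_3^{(4)}(1,p)$ at a single prime; the vanishing $c_2(p)=c_3(p)=0$; and the square-free evaluation $\tau_{\Theta_{GL_3}^{(4)},fin}(\text{diag}(\underline{d},1,1))=G_3^{(4)}(1,d)$ times residue symbols, composite moduli being handled by \eqref{gauss9}. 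Feeding these into the simplified multi-prime identity, every summand acquires the full factor $\prod_j G_3^{(4)}(1,p_j)=G_3^{(4)}(1,a)$, and the subsets $T$ are in bijection with factorizations $a=d_1d_2$ (with $d_2=\prod_{j\in T}p_j$). Collecting the residue symbols produced by the two reduction steps should assemble, for square-free $a$, into the asserted $G_3^{(4)}(1,a)\sum_{d_1d_2=a}(d_1/d_2)_2$. The single-prime consistency check is that $c_1(p)$ and $G_3^{(4)}(1,p)\,c_0$ sum to $2\,G_3^{(4)}(1,p)$, matching the two divisors of $p$.

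I expect the main obstacle to be the cocycle bookkeeping of the second and third steps, specifically showing that the surviving symbols are exactly the \emph{quadratic} symbols $(d_1/d_2)_2$ of Patterson's formula. Unlike the cubic case, the $GL_3^{(4)}$ cocycle carries an eighth-order Hilbert symbol on determinants, and the $SO_7$ local computation underlying Proposition~\ref{prop62} already mixed $G_1^{(4)}$ and $G_3^{(4)}$ before they partially cancelled; so I would need to verify carefully that across all primes simultaneously the order-$8$ and order-$4$ contributions collapse to order-$2$ symbols, using reciprocity together with $(p,q)_v=1$ for $v\in S$ (which holds since $p,q\in R$). This is precisely the place where the $SO_4\cong (A_1\times A_1)$ ``squared'' structure is reflected arithmetically, and it is the delicate point that makes the output consistent with the quartic $GL_2$ theta coefficients rather than with a quartic residue symbol.
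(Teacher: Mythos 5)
Your proposal is correct and takes essentially the same route as the paper: the paper likewise promotes Proposition~\ref{prop62} to multi-prime identities (its \eqref{so721}), simplifies the $\Theta_{GL_3}^{(4)}$ coefficients via $K_\theta$-invariance, left-invariance under the central elements $\underline{p}\,I_3$, and an explicit cocycle computation producing $(p,q)_{\nu_1}^3(q,p)_{\nu_2}^3$, and then inserts the Hecke-theoretic values of $\tau_{\Theta_{GL_3}^{(4)},\text{fin}}$ together with \cite{Ho}, Prop.~5.1 (with $n=4$, $r=3$) for the periodicity factor $|b|$ in part (1). The quadratic symbols collapse out exactly where you predicted, via \eqref{gauss9} and the identity $G_3^{(4)}(1,p)\,G_3^{(4)}(1,q)=\left(\frac{q}{p}\right)_2 G_3^{(4)}(1,pq)$, which rests on $\left(\frac{q}{p}\right)_4=\left(\frac{p}{q}\right)_4$ for $p,q\in R$.
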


\begin{proof}
Once again for notational convenience we consider prime and coprime elements $\underline{p},\underline{q}$ in $R$,
corresponding to distinct places $\nu_1$,  $\nu_2$ and consider $a=p^iq^j$, $i,j=0,1$.  These arguments extend to more
than two primes without difficulty. 

The starting point is the proof of Prop.~\ref{prop62}. Indeed, arguing as in the $Sp_4$ case we obtain the following two identities. First, for integers $i\ge 1$ we have
\begin{multline*}
\tau_{\sigma_{S0_4}^{(4)},fin}(m(\underline{p}^i))=\\ \tau_{\Theta_{GL_3}^{(4)},fin}\begin{pmatrix} \underline{p}^i&&\\ &1&\\ &&1\end{pmatrix}+G_3^{(4)}(1,p)\tau_{\Theta_{GL_3}^{(4)},fin}\left (\begin{pmatrix} \underline{p}^i&&\\ &1&\\ &&1\end{pmatrix}
\begin{pmatrix} 1&&\\ &\iota(p)&\\ &&\iota(p)\end{pmatrix}\right ).
\end{multline*}
Here $m(a)=\text{diag}(a,1,1,a^{-1})\in SO_4$. The second identity, valid for all integers $i,j\ge 1$, is
\begin{align}\label{so721}
\tau_{\sigma_{S0_4}^{(4)},fin}(m(\underline{p}^i\underline{q}^j))&= \tau_{\Theta_{GL_3}^{(4)},fin}\begin{pmatrix} \underline{p}^i\underline{q}^j&&\\ &1&\\ &&1\end{pmatrix}\\ \notag
&\quad+G_3^{(4)}(1,p)\tau_{\Theta_{GL_3}^{(4)},fin}\left (\begin{pmatrix} \underline{p}^i\underline{q}^j&&\\ &1&\\ &&1\end{pmatrix}
\begin{pmatrix} 1&&\\ &\iota(p)&\\ &&\iota(p)\end{pmatrix}\right ) \\ \notag
&\quad
+G_3^{(4)}(1,q)\tau_{\Theta_{GL_3}^{(4)},fin}\left (\begin{pmatrix} \underline{p}^i\underline{q}^j&&\\ &1&\\ &&1\end{pmatrix}
\begin{pmatrix} 1&&\\ &\iota(q)&\\ &&\iota(q)\end{pmatrix}\right )\\ \notag
&
+G_3^{(4)}(1,p)G_3^{(4)}(1,q)\tau_{\Theta_{GL_3}^{(4)},fin}\left (\begin{pmatrix} \underline{p}^i\underline{q}^j&&\\ &1&\\ &&1\end{pmatrix}
\begin{pmatrix} 1&&\\ &\iota(p)\iota(q)&\\ &&\iota(p)\iota(q)\end{pmatrix}\right ).
\end{align}

Using the fact that $(\underline{p},\underline{p})=(\underline{q},\underline{p})=1$, we argue as in the proof of
Thm.~\ref{maincfh} to obtain the identities
\begin{equation}\label{so731}
\tau_{\sigma_{S0_4}^{(4)},fin}(m(\underline{p}^i))= \tau_{\Theta_{GL_3}^{(4)},fin}\begin{pmatrix} \underline{p}^i&&\\ &1&\\ &&1\end{pmatrix}+G_3^{(4)}(1,p)\tau_{\Theta_{GL_3}^{(4)},fin}\begin{pmatrix} \underline{p}^{i-1}&&\\ &1&\\ &&1\end{pmatrix}
\end{equation}
and
\begin{align}\label{so741}
\tau_{\sigma_{S0_4}^{(4)},fin}(m(\underline{p}^i\underline{q}^j))&=\tau_{\Theta_{GL_3}^{(4)},fin}\begin{pmatrix} \underline{p}^i\underline{q}^j&&\\ &1&\\ &&1\end{pmatrix} 
+G_3^{(4)}(1,p)\tau_{\Theta_{GL_3}^{(4)},fin}\begin{pmatrix} \underline{p}^{i-1}\underline{q}^j&&\\ &1&\\ &&1\end{pmatrix}\\ \notag
&\qquad +G_3^{(4)}(1,q)\tau_{\Theta_{GL_3}^{(4)},fin}\begin{pmatrix} \underline{p}^i\underline{q}^{j-1}&&\\ &1&\\ &&1\end{pmatrix}
\\ \notag
&\qquad
+G_3^{(4)}(1,pq)\tau_{\Theta_{GL_3}^{(4)},fin}\begin{pmatrix} \underline{p}^{i-1}\underline{q}^{j-1}&&\\ &1&\\ &&1\end{pmatrix}.
\end{align}

In deriving \eqref{so741} from \eqref{so721} the only difficulty is to check that we get the right cocycle
so as to obtain the last summand of  \eqref{so741}.  To explain this point,
starting with the last summand of  \eqref{so721}, by $K_\theta$-invariance we have
\begin{multline*}
\tau_{\Theta_{GL_3}^{(4)},fin}\left (\begin{pmatrix} \underline{p}^i\underline{q}^j&&\\ &1&\\ &&1\end{pmatrix}
\begin{pmatrix} 1&&\\ &\iota(p)\iota(q)&\\ &&\iota(p)\iota(q)\end{pmatrix}\right )\\ =
\tau_{\Theta_{GL_3}^{(4)},fin}\left (\begin{pmatrix} \underline{p}^i\underline{q}^j&&\\ &1&\\ &&1\end{pmatrix}
\begin{pmatrix} 1&&\\ &\iota(p)\iota(q)&\\ &&\iota(p)\iota(q)\end{pmatrix}\begin{pmatrix} 1&&\\ &\iota(q)^{-1}\underline{q}&\\ &&\iota(q)^{-1}\underline{q}\end{pmatrix}\right ).
\end{multline*}
Multiplying the last two matrices, the cocycle contributes a factor of
$(p,q)_{\nu_1}^3$, where $(~,~)_{\nu_1}$ is the local 4-th order residue symbol in $F_{\nu_1}$.
Adjusting on the right by a similar factor involving $p$, we obtain
\begin{multline*}
\tau_{\Theta_{GL_3}^{(4)},fin}\left (\begin{pmatrix} \underline{p}^i\underline{q}^j&&\\ &1&\\ &&1\end{pmatrix}
\begin{pmatrix} 1&&\\ &\iota(p)\iota(q)&\\ &&\iota(p)\iota(q)\end{pmatrix}\right )
=\\(p,q)_{\nu_1}^3(q,p)_{\nu_2}^3\tau_{\Theta_{GL_3}^{(4)},fin}\left (\begin{pmatrix} \underline{p}^i\underline{q}^j&&\\ &1&\\ &&1\end{pmatrix}
\begin{pmatrix} 1&&\\ &\underline{p}\,\underline{q}&\\ &&\underline{p}\,\underline{q}\end{pmatrix}\right ).
\end{multline*}
Thus by \eqref{gauss9} we obtain the last term in \eqref{so741}.  

The first part of the Theorem now follows
from \eqref{so731}, \eqref{so741} by
 the periodicity properties of $\Theta_3^{(4)}$. Note that we obtain the factor $|b|$ from the periodicity 
for $\Theta_3^{(4)}$ since this function is on 
a cover of $GL_3$
(apply \cite{Ho}, Prop.\ 5.1, with $n=4$, $r=3$).   As for the second part, 
the desired conclusion now
follows by putting $i,j=0,1$ and using the values of $\tau_{GL_3^{(4)},fin}$.  Note that we have
$$G_3^{(4)}(1,p)G_3^{(4)}(1,q)=\left(\frac{q}{p}\right)_2 G_3^{(4)}(1,pq),$$
since under our assumptions
$$\left(\frac{q}{p}\right)_4=\left(\frac{p}{q}\right)_4.$$
\end{proof}

The first part of Theorem~\ref{mainso7} is consistent with the periodicity property for the square of the coefficients
of a $GL_2$ theta function, and the second part 
is consistent with the Patterson conjecture, in the sense that the 
Fourier coefficients of the
square-integrable function on the four-fold cover that we have obtained as a descent
satisfy the conjectured relation for the quartic theta coefficients for these primes.
We also remark that Eqn.~\eqref{so731} with $i=2$ gives
$$\tau_{\sigma_{S0_4}^{(4)},fin}(m(\underline{p^2}))=G_3^{(4)}(1,p)^2.
$$
This is also consistent with Conjecture~\ref{conjecture1}, since the corresponding fact for the quartic
theta function is known via Hecke theory (see for example\ \cite{C-F-H}, pg.\ 153).

\end{document}